\providecommand{\keywords}[1]{\textbf{\emph{Keywords: }} #1}
\newtheorem{theorem}{Theorem}[section]
\newtheorem{assumption}[theorem]{Assumption}
\newtheorem{corollary}[theorem]{Corollary}
\newtheorem{definition}[theorem]{Definition}
\newtheorem{lemma}[theorem]{Lemma}
\newtheorem{proposition}[theorem]{Proposition}
\theoremstyle{remark}
\newtheorem{remark}[theorem]{Remark}
\newtheorem{example}[theorem]{Example}
\newcommand{\R}{\mathds{R}}
\newcommand{\N}{\mathds{N}}
\newcommand{\Z}{\mathds{Z}}
\newcommand{\T}{\mathbb{T}}
\newcommand{\converges}[1]{ \overset{#1}{\longrightarrow}} 
\newcommand{\M}{\mathcal{M}}
\newcommand{\x}{\mathbf{x}}
\newcommand{\y}{\mathbf{y}}
\newcommand{\z}{\mathbf{z}}
\newcommand{\bH}{\mathbf{H}}
\newcommand{\veps}{\varepsilon}
\DeclareMathOperator{\supp}{supp}
\DeclareMathOperator{\diam}{diam}
\DeclareMathOperator{\diamP}{\diam_\theta(\mathcal{P}(\mathcal{K}_N))} 
\DeclareMathOperator{\Ric}{Ric}
\DeclareMathOperator{\divergence}{div}
\DeclareMathOperator{\Lip}{Lip}
\newcommand{\red}{\color{red}}
\definecolor{mygreen}{rgb}{0.1,0.75,0.2}
\newcommand{\nc}{\normalcolor}
\title{ Gromov-Hausdorff limit of Wasserstein spaces on point clouds  }
\author[N.\ Trillos]{Nicol\'as Garc\'ia Trillos}
\address{Nicol\'as Garc\'ia Trillos, Department of Statistics, University of Wisconsin-Madison. 1300 University Avenue, Madison, WI, USA 53706  \\}
\email{garciatrillo@wisc.edu}
\subjclass[2010]{ 49J45	49J55  49J15  35K05 }
\keywords{discrete optimal transport, random geometric graphs, Gromov-Hausdorff convergence, Wasserstein distance}
\begin{document}
\maketitle

\begin{abstract}
We consider a point cloud $X_n := \{ \x_1, \dots, \x_n \}$ uniformly distributed on the flat torus $\T^d : = \R^d / \Z^d  $, and construct a geometric graph on the cloud by connecting points that are within distance $\veps$ of each other. We let $\mathcal{P}(X_n)$ be the space of probability measures on $X_n$ and endow it with a discrete Wasserstein distance $W_n$ as introduced independently in \cite{Zhou,Maas, Mielke_2011} for general finite Markov chains. We show that as long as $\veps= \veps_n$ decays towards zero slower than an explicit rate depending on the level of uniformity of $X_n$, then the space $(\mathcal{P}(X_n), W_n)$ converges in the Gromov-Hausdorff sense towards the space of probability measures on $\T^d$ endowed with the Wasserstein distance. The analysis presented in this paper is a first step in the study of stability of evolution equations defined over random point clouds as the number of points grows to infinity.
\end{abstract}

\section{Introduction}

Two rapidly expanding research areas in the calculus of variations and PDEs are at the core of this work. On the one hand, the large sample asymptotic analysis of graph based procedures in machine learning. On the other, the analysis of evolution equations on graphs (or discrete Markov chains) which are formulated as gradient flows with respect to discrete transport metrics. The purpose of this paper is to build a bridge between these two areas and to take a first step in the study of large sample stability of several evolution equations defined on random point clouds.

The use of graphs for machine learning has been proposed in the past decades by different authors in connection with several learning tasks. Examples of graph based methods include spectral clustering \cite{vonLux_tutorial}, total variation clustering \cite{szlam2009total}, diffusion maps \cite{COIFMAN20065}, graph Laplacian regularization \cite{smola2003kernels}, trend filtering on graphs \cite{wang2016trend}, and graph $p$-Laplacian regularization \cite{el2016asymptotic}, among others. All of these procedures can be formulated as optimization problems where a graph structure on a data set is explicitly used for constructing intrinsic regularization terms or to enforce geometric constraints. For a general statistical procedure that depends on $n$ input data points, a natural question to ask is whether it is stable or consistent as the number of available data points grows. In the context of graph-based learning, the work \cite{trillos2016continuum} introduced a framework, based on ideas from the calculus of variations, to study the consistency issue for a large family of such optimization problems on graphs, and rigorously connect optimization problems at the discrete level with continuum variational problems. Cheeger and Ratio graph cuts were studied in \cite{ConClus}, and were shown to converge, in the sense of $\Gamma$-convergence, towards analogue partition problems at the continuum level. In \cite{trillos2018variational} a variational approach to study the stability of the so called spectral clustering algorithm was taken. One of the main steps in the analysis involved analyzing the large sample behavior of the spectra of graph Laplacians (for which there are actually rates of convergence; see \cite{Burago,trillos2018spectral}). Other PDE and variational techniques have been used to study optimization problems for supervised and semi-supervised learning tasks (i.e. in the presence of partially labeled data) including the work \cite{slepcev2017analysis} where $p$-Laplacian regularization was studied using variational techniques, and \cite{calder2018game}, where a PDE approach using a viscosity solution interpretation of elliptic PDEs was used to prove similar results. Other works that have used PDE techniques in the context of graph based learning are \cite{calder2017consistency,calder2018properly}.

More or less around the same time, three independent contributions by Chow et al \cite{Zhou}, Maas \cite{Maas}, and Mielke \cite{Mielke_2011}, introduced the notion of discrete transport distance (or discrete Wasserstein space) over a fixed Markov chain (or fixed graph). Following closely the dynamic formulation of optimal transport \`{a} la Benamu-Brenier \cite{Benamou2000}, these works established a close connection between evolution equations on graphs and gradient flows with respect to said discrete transport metrics. In particular, their work provided a characterization for the heat flow in a graph analogous to the one at the continuum level introduced in the seminal work of Jordan, Kinderlehrer, and Otto \cite{OTTO}. Discrete optimal transport distances have been recently used to provide variational formulations to several other discrete evolutionary PDEs \cite{ErbarMaasPorousMediumEqns,Laschos,Maas_2016}, to define Ricci curvature lower bounds \cite{Erbar2012,fathi2016,Mielke2013}, and to define super Ricci flows \cite{KopferErbar}. Geodesics for discrete optimal transport have been studied in \cite{Erbar2018,VillaniBook}.

In this paper we consider the same set-up as in \cite{trillos2016continuum} and restrict our attention to graphs whose sets of nodes are point clouds in Euclidean space randomly sampled from a ground-truth distribution, and whose edges are obtained by connecting nodes with an edge if they are sufficiently close to each other (i.e. random geometric graphs). To be more precise, we think of the data points $\x_1, \dots, \x_n$  as i.i.d. random samples from some distribution $\rho$ supported on some manifold $\M$. A graph on the point cloud is constructed by putting edges between points that are close to each other (closer than some quantity $\veps>0$). The graph captures the similarity among data points and allows one to define the discrete differential operators necessary to make sense of discrete transport metrics. In this geometric graph setting three main questions arise:
\begin{enumerate}
	\item Is the discrete Wasserstein space on a random point cloud an approximation of the Wasserstein space at the continuum level?
	\item Are evolutionary PDEs defined on this discrete Wasserstein space consistent, and in particular converge in the limit towards analogue evolution PDEs on $\M$?
	\item Is it possible to establish consistency of machine learning algorithms (with an evolution flavor) using the mathematics stemming from the answers to the previous questions?
\end{enumerate}
In this paper we give an answer to the first question. We show that in a localized setting (i.e. when $\M$ is the $d$-dimensional torus and $\rho$ is the uniform distribution) under appropriate conditions on $\veps$, discrete transport metrics on random geometric graphs converge, in the Gromov-Hausdorff sense as the number of data points grows, towards the usual Wasserstein space on the space of probability measures supported on $\M$. While here we will not provide an answer to the second or third questions above, we notice that an answer to the first question is actually an important first step for addressing the other two. Indeed, the Gromov-Hausdorff convergence of metric spaces is a good notion of convergence for metric spaces closely connected to the $\Gamma$-convergence framework in \cite{SandierSerfaty} and \cite{Gigli2010} to study stability of gradient flows when these are not necessarily defined on the same metric space.  The ultimate goal of the line of research explored in this paper is to provide concise proofs of large sample limits of evolution equations on graphs, and show that these evolutions converge appropriately to evolution equations on a continuum domain or manifold. From the point of view of data analysis our paper can be conceived as a first step in the attempt to use modern analytical methods to establish consistency results for algorithms whose outputs are not solutions of optimization problems, but rather, the result of an evolution in time process; a few examples are the mean shift algorithm and variants like the blurred mean shift algorithm (see \cite{Carreira}).

The concrete question studied in this paper is related to the works by Gigli and Maas \cite{gigmaas}, and Gladbach, Kopfer, and Maas \cite{MaasKopferGladbach}. In \cite{gigmaas}, the authors study a similar problem to ours, but in the setting where the point clouds are actually aligned on a regular grid, and the associated graphs are nearest neighbor graphs. The work \cite{MaasKopferGladbach} improves in several regards the work \cite{gigmaas}, and investigates necessary and sufficient conditions on more general meshes to guarantee the convergence in the limit towards the Wasserstein space. The general layout of our paper follows closely the one in \cite{gigmaas}, however, our arguments are quite different since in our setting there is little structure in the graph that we can exploit, and for us it will be important to have enough ``averaging" for the homogenization to occur in the limit; in particular, we need the average degree of our graph to grow with $n$ (although at a much smaller rate than $n$, and for instance slightly more than logarithmic growth would suffice). Another aspect of our graphs that will become apparent later on, and which is not present in the graphs considered in \cite{gigmaas,MaasKopferGladbach}, is the fact that geometric graphs have both "non-local" and "local" attributes manifesting at different length scales.

To finish this introduction we would also like to mention the papers \cite{HwaDamelinHero,davis2019}. There, the authors study large sample asymptotic behavior of several metrics defined on random point clouds. One main difference with these metrics and the one that we analyze here is that these are only defined on the point cloud. Moreover, these metrics do not have enough structure to be used to cast evolutionary PDEs on graphs as gradient flows. This contrasts with the properties of discrete optimal transport metrics discussed in \cite{Maas}.

\subsection{Outline}

The rest of the paper is organized as follows. In section \ref{Sec:ContWass} we recall the definition of Wasserstein space in $\T^d$ and revisit its dynamical interpretation \`{a} la Benamu-Brenier. In section \ref{Sec:DisGradients} we describe precisely our discrete set-up and introduce the notions of graph Laplacian, discrete gradient, and discrete divergence.  In section \ref{Sec:DisWass} we define the discrete Wasserstein distance. In section \ref{Sec:Gromov} we revisit the notion of  Gromov-Hausdorff distance of metric spaces. With all the background and assumptions in place we proceed to state explicitly the main results of the paper in section \ref{Sec:main}.  In section \ref{Sec:Apriori} we present some a priori estimates for the discrete Wasserstein distance that will be used in our proofs. In sections \ref{Sec:Smooth} and \ref{Sec:SmoothDisc} we study the smoothening of curves in the continuous and discrete settings. In section \ref{Sec:aux} we collect the last auxiliary results that we need for the proof of our main results. Finally, in section \ref{Sec:Proof} we prove our main results.

\subsection{Wasserstein distance in the continuum and Benamu-Brenier formula}
\label{Sec:ContWass}

Let $\mathcal{P}(\T^d)$ be the set of Borel probability measures on $\T^d$. This  set can be endowed with the Wasserstein distance induced by the Euclidean metric on $\T^d$: for $\mu, \tilde{\mu} \in \mathcal{P}(\T^d)$ the Wasserstein distance $W(\mu, \tilde{\mu})$ is defined as
\[ (W(\mu, \tilde{\mu}))^2 := \min_{\pi \in \Gamma(\mu, \tilde{\mu})} \int_{\T^d \times \T^d}  \lvert x-y \rvert^2  d\pi(x,y),   \] 
where $\Gamma(\mu, \tilde{\mu})$ denotes the set of \textit{couplings} (a.k.a. transportation plans) from $\mu$ into $\tilde{\mu}$, that is, measures on the product space $\T^d \times \T^d$ whose first and second marginals are $\mu$ and  $\tilde{\mu}$ respectively. Here $\lvert x-y \rvert$ represents the (periodic) Euclidean distance (on $\T^d$) between points $x$ and $y$.  


The work \cite{Benamou2000} shows that the Wasserstein space is a length space, and furthermore, that the distance $W(\mu, \tilde{\mu})$ is realized by a curve in $\mathcal{P}(\T^d)$ connecting $\mu$ and $\tilde{\mu}$ that has \textit{minimal action}.  That is, for any $\mu,\tilde{\mu} \in \mathcal{P}(\T^d)$, 
\[  (W(\mu, \tilde{\mu}))^2 = \min \int_{0}^1 \int_{\T^d} \lvert  \vec{V}_t(x)\rvert ^2 d \mu_t(x) dt, \]
where the $\min$ is taken over all solutions $(\mu_t, \vec{V}_t)$ to the continuity equation
\begin{equation}
\frac{d}{dt}\mu_t + \divergence( \vec{V}_t \mu_t   ) =0,  
\label{Conteqn}
\end{equation}
with $\mu_0=\mu$ and $\mu_1=\tilde{\mu}$. In general, \eqref{Conteqn} has to be interpreted in the distributional sense, that is, 
\[ \int_{0}^1 \int_{\T^d} \left( \frac{d}{dt} \phi(x,t) +  \langle \vec{V}_t(x), \nabla_x \phi(x,t) \rangle  \right) d\mu_t(x) dt , \quad \forall \phi \in C_c^\infty(\T^d \times (0,1)), \]
or equivalently 
\[ \frac{d}{dt} \int_{\T^d} \phi(x) d \mu_t(x)  = \int_{\T^d} \nabla \phi \cdot \vec{V}_t(x) d \mu_t(x), \quad \forall \phi \in C^\infty(\T^d).  \]
See Chapter 8 in \cite{GigliBook}.

When the measures $\mu_t$ are absolutely continuous with respect to the Lebesgue measure, i.e. $d\mu_t(x)= \rho_t(x) dx$, and in addition $\rho_t >0$, we may work with \textit{flux vector fields} and the continuity equation \eqref{Conteqn} may be written in \textit{flux form} as
\[  \frac{d}{dt}\rho_t + \divergence( \vec{V}_t  ) =0, \] 
where $\vec{V}_t$ in the above formula is the flux $\rho_t$ times $\vec{V}_t$ in \eqref{Conteqn}. As before we have to interpret this new equation in the distributional sense, but we remark that when the densities and flux vector fields are smooth the equation can be interpreted in the classical sense. Finally, the total action associated to a solution to the continuity equation in flux form is defined as 
\[  \int_{0}^1 \mathcal{A}(\rho_t, \vec{V}_t) dt   \]
where
\[\mathcal{A}(\rho_t, \vec{V}_t):= \int_{\T^d}\frac{\lvert \vec{V}_t(x) \rvert^2}{\rho_t(x)} dx.  \]
With these new definitions, we can now say that for an arbitrary solution to the continuity equation in flux form (connecting the densities $\rho_0$ and $\rho_1$) we have
\[ (W(\mu_0, \mu_1))^2 \leq \int_{0}^1 \mathcal{A}(\rho_t, \vec{V}_t) dt.   \]
In the remainder we use $\vec{V}_t$ to denote both, vector fields and flux vector fields, and no confusion should arise by doing so.

\subsection{Gradients, divergences, and Laplacians on weighted graphs}
\label{Sec:DisGradients}

A weighted graph $(X, \omega)$ consists of a set 
$X$ with $N$ elements, and a non-negative function $\omega: X \times X \rightarrow [0, \infty)$. In the remainder we will always assume that $\omega$ is a symmetric function. 

Let $\gamma$ be a base probability measure on $X$ and let $\veps$ be a positive number. We abuse notation slightly and write $\gamma(\x)$ instead of $\gamma(\{ \x\})$, and assume that $\gamma$ gives positive mass to every point in $X$. Let $L^2(\gamma)$ be the set of functions of the form $u : X \rightarrow \R$. In other words,  $L^2(\gamma)$ is the set of real valued functions on $X$. Naturally, $L^2(\gamma)$ can be endowed with an inner product defined by
\[  \langle u , v \rangle_{L^2(\gamma)}  := \sum_{\x \in X} u(\x)v(\x) \gamma(\x). \]
We denote by $\mathcal{P}(X)$ the set of all probability measures on $X$. From the fact that $\gamma$ gives positive mass to every point in $X$ it follows that there is a one to one correspondence between $\mathcal{P}(X)$ and the set of functions $\rho: X \rightarrow [0,\infty)$ 
for which
\[ \int \rho(x) d \gamma(x) = \sum_{\x \in X} \rho(\x)\gamma(\x) =1.\]
We will refer to these functions $\rho$ as \textit{discrete densities}, and use the terms ``density'' and ``measure'' interchangeably. 

Let $\mathfrak{X}(X)$ be the set of functions $V: X \times X \rightarrow \R $ (i.e. the set of \textit{discrete vector fields}), and let $\langle \cdot , \cdot \rangle_{\mathfrak{X}(X)}$ be the inner product 
\[ \langle U  , V  \rangle_{\mathfrak{X}(X)}  :=  \sum_{\x, \y \in X}  U(\x,\y)V(\x, \y) \omega(\x, \y)  \gamma(\x) \gamma(\y), \quad U , V \in \mathfrak{X}(X).  \]

The \textit{divergence} operator $\divergence_N$ takes discrete vector fields and returns functions on $X$, and the \textit{gradient} operator $\nabla_N$ takes functions on $X$ and returns discrete vector fields. More precisely, we define $\divergence_N : \mathfrak{X}(X) \rightarrow L^2(\gamma)$ , $\nabla_N : L^2(\gamma) \rightarrow \mathfrak{X}(X)$ by 
\[ \divergence_N(V) (\x) := \frac{1}{\veps} \sum_{\y \in X}   \left( V(\x, \y) - V(\y, \x)   \right)\omega(x,y) \gamma(\y) , \quad V \in \mathfrak{X}(X),   \]
\[ \nabla_N u (\x, \y) := \frac{u(\y) - u(\x)}{\veps}, \quad u \in L^2(\gamma).   \]

From the symmetry of the weight function $\omega$ it follows, after a straightforward computation, that $\divergence_N$ and $-\nabla_N$ are adjoint operators. 
\begin{proposition}
Suppose that $\omega(\x, \y)= \omega(\y, \x)$ for every $\x, \y \in X$. Then, for any $u \in L^2(\gamma)$ and $U \in \mathfrak{X}(X)$ we have
\[ \langle \divergence_N(U) , u \rangle_{L^2(\gamma)} = - \langle U, \nabla_N u \rangle_{\mathfrak{X}(X)}. \]
\label{divgrad} 
\end{proposition}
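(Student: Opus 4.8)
The plan is to expand both sides using the definitions of the inner products and of $\divergence_N$ and $\nabla_N$, and then to identify them after a single relabeling of summation indices. First I would write out the left-hand side as a double sum over $X \times X$: by definition of $\langle \tacka, \tacka\rangle_{L^2(\gamma)}$ and of $\divergence_N$,
\[
\langle \divergence_N(U), u \rangle_{L^2(\gamma)} = \frac{1}{\veps} \sum_{\x, \y \in X} \bigl( U(\x,\y) - U(\y,\x) \bigr)\, \omega(\x,\y)\, u(\x)\, \gamma(\x)\,\gamma(\y).
\]

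Next I would split this into two double sums according to the two terms $U(\x,\y)$ and $U(\y,\x)$. In the sum containing $U(\y,\x)$ I would swap the (dummy) names of the summation variables $\x$ and $\y$; this turns the $\omega(\x,\y)$ factor into $\omega(\y,\x)$ and the $u(\x)$ factor into $u(\y)$, while leaving the product $\gamma(\x)\gamma(\y)$ unchanged. At this point the hypothesis $\omega(\x,\y) = \omega(\y,\x)$ is invoked to rewrite $\omega(\y,\x)$ back as $\omega(\x,\y)$, so that both sums now carry the common factor $U(\x,\y)\,\omega(\x,\y)\,\gamma(\x)\,\gamma(\y)$. Recombining gives
\[
\langle \divergence_N(U), u \rangle_{L^2(\gamma)} = \frac{1}{\veps} \sum_{\x, \y \in X} U(\x,\y)\,\omega(\x,\y)\,\bigl( u(\x) - u(\y) \bigr)\,\gamma(\x)\,\gamma(\y).
\]

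Finally I would recognize $\frac{u(\y) - u(\x)}{\veps} = \nabla_N u(\x,\y)$, so that the right-hand side above equals $-\sum_{\x,\y} U(\x,\y)\,\nabla_N u(\x,\y)\,\omega(\x,\y)\,\gamma(\x)\,\gamma(\y) = -\langle U, \nabla_N u\rangle_{\mathfrak{X}(X)}$ by the definition of the inner product on $\mathfrak{X}(X)$, which is exactly the claimed identity. There is no real obstacle here: the only point requiring any care is that the symmetry of $\omega$ is used precisely once, to legitimize the index relabeling that merges the two sums; everything else is bookkeeping. (One could also note that finiteness of $X$ makes all the sums finite, so no convergence or Fubini-type justification is needed when interchanging the order of summation.)
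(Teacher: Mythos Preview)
Your proof is correct and is precisely the ``straightforward computation'' the paper alludes to without writing out; the paper does not give a detailed proof of this proposition, and your expansion-plus-relabeling argument is the natural one.
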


The \textit{graph Laplacian} $\Delta_N: L^2(\gamma) \rightarrow L^2(\gamma)$ is the operator defined by
\[ \Delta_n := - \divergence_N \circ \nabla _N.    \]
From Proposition \ref{divgrad} it follow that $\Delta_n$ is a positive semi-definite operator. 

\nc

\subsubsection{The geometric graph setting} Let us now make the previous definitions specific to a geometric graph setting.  We consider a point cloud $X_n:= \{\x_1, \dots, \x_n \}$  contained in $\T^d$ and denote by $\nu_n$ its empirical measure, i.e.
\[ \nu_n := \frac{1}{n} \sum_{i=1}^n \delta_{\x_i}.\]
In this setting, $X_n$ plays the role of $X$ and the empirical measure $\nu_n$ the role of the measure $\gamma$.  We assume that the point cloud $X_n$, or better yet its empirical measure $\nu_n$, approximates the measure $\nu$ (the uniform measure on $\T^d$) in a sense that we now make precise. Given two arbitrary probability measures $\mu, \tilde{\mu} \in \mathcal{P}(\T^d)$, their $\infty$-OT distance, denoted $d_\infty(\mu, \tilde{\mu})$, is defined as
\[ d_\infty(\mu, \tilde{\mu}):= \min_{\pi \in \Gamma(\mu, \tilde{\mu})} \sup_{(x,y) \in \sup(\pi)} \lvert x-y \rvert. \]
The $\infty$-OT distance measures the min-max cost of transporting a given distribution of mass into another. We use this distance to measure how close $\nu_n$ is from $\nu$. We point out that $d_\infty(\nu, \nu_n)$ admits a formulation in terms of transport \textit{maps}
\[ d_\infty(\nu, \nu_n)= \min_{T_{n \sharp} \nu = \nu_n }  \lVert Id - T_n\rVert_{L^\infty(\nu)},\]
where the $\min$ in the above formula is taken over all transportation maps between $\nu$ and $\nu_n$. The equivalence of these two formulations, as well as the existence and uniqueness of minimizers to these problems, are topics studied in \cite{Champion}. From now on we use $T_n$ to represent the optimal transport map between $\nu$ and $\nu_n$ and we let $\delta_n$ be
\begin{equation}
\delta_n := \lVert  Id - T_n \rVert_{L^\infty(\nu)}.
\label{deltan} 
\end{equation}
\begin{remark}
\label{Remarkinfty}
Let $Z$ be a finite subset of $\T^d$ with cardinality $k$ and let $\mu_k$ be the empirical measure associated to $Z$, i.e. all points in $Z$ are given the same amount of mass. Then, regardless of what $Z$ is, the $\infty$-OT distance between $\mu_k$ and $\nu$ satisfies the lower bound
\[ \frac{C_d}{k^{1/d}} \leq d_\infty(\nu, \mu_k),  \]
where $C_d$ is a constant only depending on dimension.  Indeed, if this was not the case, it would be possible to construct $k$ disjoint balls in $\T^d$ whose union has bigger $\nu$-measure than $\T^d$ itself.
\end{remark}

Let us now specify how the weights $\omega$ on $X_n \times X_n$ are defined in this setting, or in other words, how the point cloud $X_n$ is endowed with a weighted graph structure. First, let $\eta : [0,\infty) \rightarrow [0, \infty)$ be the function defined by
\begin{equation}
\eta(r):=\begin{cases}1 & \text{ if } r \leq 1 \\ 0 & \text{otherwise}, \end{cases} 
\label{eta}
\end{equation}
and let $\sigma_\eta$ be the quantity
\begin{equation}
 \sigma_\eta:= \int_{0}^\infty  \eta(r) r^{d+1} dr = \frac{1}{d+2} .
 \label{sigmaeta}
\end{equation}
For given $\veps>0$ we let $\eta_\veps: [0,\infty) \rightarrow [0, \infty)$ be the rescaled version of $\eta$
\[ \eta_\veps(r) := \frac{1}{\veps^d}\eta\left( \frac{r}{\veps} \right),\]
and define the weight function $w_{\veps_n}$ by
\[ w_{\veps_n}( x, y) := \eta_{\veps_n}(\lvert  x-y \rvert), \quad \forall x , y \in \T^d, \]
where we think of $\veps=\veps_n$ as a length scale that depends on the number of data points $n$.

\begin{remark}
Although for simplicity we focus on the kernel $\eta$ defined above, we anticipate that all our results are still true for more general kernels. In particular, our results continue to hold if we simply assume that $\eta$ is a non-negative, compactly supported, and non-increasing function. In  general the quantity  $\sigma_\eta$ must be interpreted as in the first equality in \eqref{sigmaeta}.  
\label{remarketa}
\end{remark}

In the remainder we use $\divergence_n$, $\nabla_n$, $\Delta_n$ to represent the divergence, gradient and Laplacian operators in the geometric graph setting described above. We use $u_n$ for arbitrary elements in $L^2(\nu_n)$ and $V_n$ for arbitrary elements in $\mathfrak{X}(X_n)$. The expressions for $\divergence_n$, $\nabla_n$, $\Delta_n$ can be explicitly written as
\[ \nabla_n u_n (\x_i, \x_j) = \frac{u_n(\x_j) - u_n(\x_i)}{\veps_n} , \quad u_n \in L^2(\nu_n),  \]
\[ \divergence_n V_n(\x_i) = \frac{1}{n\veps_n^{d+1}} \sum_{j=1}^n ( V_n(\x_i, \x_j)  - V_n(\x_j, \x_i) ) \eta\left(\frac{ \lvert \x_i- \x_j \rvert}{\veps_n} \right) , \quad V_n \in \mathfrak{X}(X_n), \]
\[ \Delta_n u_n (\x_i) =  , \quad u_n \in L^2(\nu_n).  \]

\nc

\subsection{Discrete Wasserstein distance}
\label{Sec:DisWass}

Let $(X, \omega)$ be an arbitrary weighted graph, $\gamma$ a probability measure on $X$ giving positive mass to every point in $X$, and $\veps$ a positive number. Inspired by the Benamou-Brenier formulation of optimal transport at the continuum level, one can define a discrete Wasserstein distance on the set $\mathcal{P}(X)$ by introducing discrete continuity equations and their actions. This has been done in \cite{Maas} in the context of finite Markov chains.

\begin{definition}
We say that $ t \in [0,1] \mapsto  (\rho_{t}, V_{t})$ satisfies the discrete continuity equation (in flux form) if 
\begin{enumerate}
\item  $\rho_{t} \in \mathcal{P}(X)$ and $V_t \in \mathfrak{X}(X)$ for every $t \in [0,1]$.
\item For every $\x \in X$,
\begin{equation}
 \frac{d}{dt}\rho_{t}(\x) + \divergence_N(V_{t})(\x) = 0. 
\end{equation}

\end{enumerate}
\end{definition}

\begin{remark}
Notice that the second condition in the above definition implies the first one provided that $\rho_{0} \in \mathcal{P}(X)$ and $\rho_{t}$ stays non-negative for every $t \in [0,1]$. Indeed, this follows from the fact that 
\[ \frac{d}{dt} \sum_{\x \in X} \rho_{t}(\x) \gamma(\x)= -\sum_{\x \in X} \divergence_N(V_{t})(\x) \gamma(\x)=  - \langle \divergence_N(V_{t}) , \mathds{1} \rangle_{L^2(\gamma)} = \langle V_{t} , \nabla_N \mathds{1} \rangle_{\mathfrak{X}(X)} =  0, \]
where $\mathds{1}$ represents the function that is identically equal to one. 
\label{RemarkNonNegative}
\end{remark}
In order to define the action associated to a solution to the discrete continuity equation $t \in [0,1] \mapsto(\rho_{t}, V_{t})$, we first need to specify a way to interpolate the values of a density function at different points in $X$ so as to induce a ``density" on the edges of the graph. We use an \textit{interpolating function} $\theta: [0, \infty) \times [0,\infty) \rightarrow [0,\infty)$ to do this. We make the following assumptions on $\theta$.

\begin{assumption}
The following holds:	
\begin{enumerate}
	\item Symmetry: $\theta(s,t) = \theta(t,s) $ for all $s,t$.
	\item Monotonicity: $\theta(r,t) \leq \theta(s,t)$ for all $r \leq s$ and all $t$.
	\item Positive homogeneity: $\theta(\lambda s , \lambda t) = \lambda \theta(s, t)$ for all $\lambda \geq 0$ and all $s,t$.
	\item Normalization: $\theta(1,1)=1$. Combined with the positive homogeneity this implies that $\theta(\lambda, \lambda) = \lambda$ for all $\lambda \geq 0$.
	\item Joint concavity: The function $(x,y) \mapsto \theta(x,y)$ is concave. 
	\item We assume that 
	\[ C_\theta:= \int_{0}^1 \frac{1}{\sqrt{\theta(1-t, t)}} dt < \infty.\]
\end{enumerate}
\label{assump}
\end{assumption}

\begin{remark}
Assumptions (1)-(5) are all quite standard conditions to impose on an interpolating function.  As explained in Chapter 3 in \cite{Maas}, condition (6) is an optimal requirement in the sense that it is a necessary and sufficient condition for the discrete Wasserstein distance on a two point space to be finite.
\end{remark}
\nc

\begin{remark} Typical examples of functions $\theta$ satisfying the above conditions are:
\begin{enumerate}
\item The arithmetic mean
\[ \theta_1(r,s) = \frac{r+s}{2}. \]
\item The logarithmic mean defined by 
\[ \theta_2(r,s) = \int_{0}^1 r^u s^{1-u}du = \frac{r-s}{\log(r) - \log(s)}.\] 
\end{enumerate} 
\label{InterpolationgFuncs}
\end{remark}

\begin{remark}
In all our main results the only assumptions we make on $\theta$ are (1)-(6). Nevertheless, we will give special attention to interpolating functions of the form
\[ \theta(r,s)= \frac{r-s}{f'(r) - f'(s)}, \] 
where $f$ is a convex function. This is due to the close connection of such interpolating functions with heat flows on graphs (see \cite{Maas}). 
\end{remark}

\begin{definition}

The \textit{total action} (with respect to the interpolating function $\theta$) associated to a solution $ t \in [0,1] \mapsto (\rho_{t} , V_{t}) $ to the discrete continuity equation is  
\begin{equation}
 \int_{0}^1 \mathcal{A}_N(\rho_{t}, V_{t})  dt,
\label{DiscreteAction}
\end{equation}
where 
\begin{equation*}
\label{DiscreteAction0}
  \mathcal{A}_N ( \rho_{t}, V_{t})  :=    \sum_{\x, \y \in X} \frac{( V_{t}(\x, \y)  )^2}{\theta(\rho_{t}(\x) , \rho_{t}(\y) )} w(\x, \y) \gamma(\x) \gamma(\y).
\end{equation*}
In the definition of $\mathcal{A}_N(\rho_{t}, V_{t})$ we use the convention that when $V_{t}$ and $\theta$ are both equal to zero, the quotient $V_t^2/\theta$ must be interpreted as zero. Also, if $V_t$ is different from zero and $\theta$ is zero, the whole expression should be interpreted as taking the value $+\infty$ (i.e. these are forbidden flows of mass). 
\end{definition}

\begin{definition}(Discrete Wasserstein distance)
Let $\rho, \tilde{\rho}\in \mathcal{P}(X)$ be two discrete densities. The discrete Wasserstein distance $W_N(\rho, \tilde{\rho})$ is defined by
\[ (W_N(\rho, \tilde{\rho}))^2 := \inf \int_{0}^{1}  \mathcal{A}_N(\rho_{t}, V_{t}) dt, \]
where the $\inf$ is taken over all solutions $t \in [0,1] \mapsto (\rho_{t}, V_{t})$ of the discrete continuity equation that satisfy
\[  \rho_{0}= \rho , \quad \rho_{1}=\tilde{\rho}.\]
\label{def:DiscreteWass}
\end{definition}

\begin{remark}
We remark that the $\inf$ in the definition of the discrete Wasserstein distance can be replaced by $\min$ (see \cite{Erbar2012}). In other words, we can always find a geodesic connecting two arbitrary discrete probability measures.
\end{remark}
\nc

We now consider two examples that illustrate the notions introduced before. The first example is related to the work in \cite{gigmaas} and can be interpreted as a discrete model for local geometry. The second example is used in Section 2 and can be seen as a discrete model for fully non-local geometry.

\begin{example}[Linear graph]
Let $X = \{ 0,  1/N , 2 /N , \dots, (N-1)/N \}$, and let $\gamma$ be the uniform distribution on $X$. Let $\omega : X \times X \rightarrow \R$ be the weight matrix with zeros in all entries except at consecutive points where instead we have
\[  \omega \left(\frac{i}{N}, \frac{i+1}{N} \right)  =  \omega \left(\frac{i}{N}, \frac{i-1}{N} \right) = \frac{1}{2}, \quad \forall i=0, \dots, N-1. \]
In the above we are identifying the numbers $0$ and $1$ so that in particular the set $X$ can be thought as a regular grid on the torus $\T^1$. We set $\veps=1$.

%
%
%

The work of \cite{gigmaas} shows that the discrete Wasserstein space on this linear graph is an approximation (after appropriate rescaling) in the Gromov-Hausdorff sense of the usual Wasserstein space over $\T^1$. The situation in this example contrasts with that in Example \ref{Example1} below.
\end{example}

\begin{example}[Complete graph]
\label{Example1}
Let $X$ be any finite set with $N$ elements, $\gamma$ be the uniform distribution on $X$, $\omega$ be the $N \times N$ transition matrix with $1/ N$ in all its entries, and $\veps=1$. After fixing an interpolating function $\theta$ satisfying the conditions (1)-(6), let $\mathcal{P}(\mathcal{K}_N)$ be the associated discrete Wasserstein space. 

The diameter of the space $\mathcal{P}(\mathcal{K}_N)$ is an important quantity that will be used in Section 2, and in what follows we find estimates for it. We use the notation $\diam_\theta(\mathcal{K}_N)$ to emphasize the role of $\theta$ in the estimates. Let us first suppose that $\theta$ has the form
\begin{equation}
\theta(r,s) = \frac{r-s}{f'(r) - f'(s)},
\label{SpecialFormTheta}
\end{equation}
for some function $f$ with $f''>0$ in $(0,\infty)$. Without the loss of generality we can assume $f$ satisfies $f(0)=f(1)=0$.  One simple way to obtain an upper bound for $\diam_\theta(\mathcal{P}(\mathcal{K}_n))$ is to use Talagrand's inequality in $\mathcal{P}(\mathcal{K}_N)$. The idea is to find bounds for the distance in terms of an entropy functional $\mathcal{H}_{N,f}$ that can be written in terms of $f$; the condition $f(1)=0$ guarantees that the entropy functional is non-negative. This approach relies on the existence of a positive lower bound for the Ricci curvature in the space $\mathcal{K}_N$; in turn, lower bounds on the Ricci curvature are defined in terms of convexity properties of the entropy functional $\mathcal{H}_{N,f}$. The previous discussion is presented in detail in \cite{Erbar2012}. Furthermore, explicit calculations in Section 5 in \cite{Erbar2012} show that
\[ \Ric(\mathcal{K}_N)\geq \frac{1}{2} >0, \]
a relation that when inserted in Talagrand's inequality (see Theorem 1.5 in \cite{Erbar2012}) implies that
\[  W_N(\rho, \tilde{\rho}) \leq W_N(\rho, \mathds{1}) + W_N( \mathds{1},\tilde{\rho}) \leq C \max \{  \sqrt{\mathcal{H}_{N,f}(\rho)},  \sqrt{\mathcal{H}_{N,f}(\tilde{\rho})}   \}, \forall \rho, \tilde{\rho} \in \mathcal{P}(\mathcal{K}_N) \]
where $\mathcal{H}_{N,f}$ is defined by
\[  \mathcal{H}_{N,f}(\rho) := \frac{1}{N}\sum_{\x \in X} f( \rho(\x) ), \quad \rho \in \mathcal{P}(\mathcal{K}_N),\]
and where $C$ is a universal constant. Now, the convexity of the function $f$ implies that the maximum of $\mathcal{H}_{N,f}$ is achieved at extremal points. This fact combined with the fact that $f(0)=0$ implies that
\[  \mathcal{H}_{N,f}(\rho) \leq \frac{f(N)}{N},  \quad  \forall \rho \in \mathcal{P}(\mathcal{K}_N), \]
from where it follows that
\begin{equation}
\diam_\theta(\mathcal{P}(\mathcal{K}_N)) \leq   C \sqrt{\frac{f(N)}{N}}, 
\end{equation}
where again $C$ is a universal constant. Notice that the above general strategy allows us to find upper bounds for $\text{diam}_{\theta_2}(\mathcal{K}_N)$ where $\theta_2$ is the logarithmic mean(there $f(r) = r\log(r)$). More explicitly, we have
\begin{equation}
\diam_{\theta_2}(\mathcal{P}(\mathcal{K}_N)) \leq C \sqrt{ \log(N)}.
\label{EstimateLogarithmicMean}
\end{equation}

Notice that if the interpolating function $\theta$ does not have the form \eqref{SpecialFormTheta}, but still we can find an interpolating function $\theta_f $ of the form \eqref{SpecialFormTheta} that is smaller than $\theta$, then
\[ \diam_{\theta}(\mathcal{P}(\mathcal{K}_N)) \leq  \diam_{\theta_f}(\mathcal{P}(\mathcal{K}_N)) \leq C \sqrt{ \frac{f(N)}{N}} . \]
This can be seen directly from the definitions of discrete Wasserstein space given that the action $\mathcal{A}_N$ is monotone with respect to the interpolating function. 

%

\end{example}
\nc

\subsubsection{Continuity equation in the geometric graph setting}

We finish this section by writing explicitly the continuity equation and its associated action in the geometric graph setting. The equation reads
\[ \frac{d}{dt} \rho_{n,t}(\x_i) + \frac{1}{n\veps_n^{d+1}}  \sum_{j=1}^n (V_{n, t}(\x_i, \x_j) - V_{n,t}(\x_j , \x_i) ) \eta\left( \frac{\lvert \x_i - \x_j \rvert}{\veps_n} \right) =0 \]
and the corresponding action is 
\[\mathcal{A}_n(\rho_{n,t}, V_{n,t}) = \frac{1}{n^2 \veps_n^d} \sum_{i,j} \frac{(V_{n,t}(\x_i, \x_j))^2}{\theta(\rho_{n,t}(\x_i), \rho_{n,t}(\x_j))}\eta \left( \frac{\lvert \x_i - \x_j \rvert}{\veps_n} \right).\]

\begin{remark}
We notice that the set $\mathcal{P}(X_n)$ is a subset of $\mathcal{P}(\T^d)$ so that in particular we may compute $W(\mu_n, \tilde{\mu}_n) $ for two measures $\mu_n, \tilde{\mu}_n \in \mathcal{P}(X_n)$. Nevertheless, in the remainder we will mostly think of $\mathcal{P}(X_n)$ as endowed with the discrete Wasserstein distance introduced in this section. 
\end{remark}

\begin{remark} An important observation is that geometric graphs behave, at lenght scale smaller than $\veps$, like complete graphs. This is because in an Euclidean ball $B$ of radius $\veps/2$, all points in $X_n \cap B$ are connected to each other. The estimates we found in Example \ref{Example1} will come in handy later on.
\label{GeoComplete}
\end{remark}

\nc

\subsection{Gromov-Hausdorff distance}
\label{Sec:Gromov}

In this section we consider $(\mathcal{X}, d_{\mathcal{X}})$ and $(\mathcal{Y}, d_{\mathcal{Y}})$ two arbitrary metric spaces and we define the Gromov-Hausdorff distance between them. We do this via the notion of correspondences and refer the interested reader to Chapter 7.3 in \cite{Burago} for a more complete discussion on the matter.

\begin{definition}
A correspondence $\mathfrak{R}$ between $\mathcal{X}$ and $\mathcal{Y}$ is a subset of the cartesian product $\mathcal{X} \times \mathcal{Y}$ such that for all $x \in \mathcal{X}$ there exists $y \in \mathcal{Y}$ for which $(x,y) \in \mathcal{R}$ and also for every $y \in \mathcal{Y}$ there exists $x \in \mathcal{X}$ for which $(x,y) \in \mathcal{R}$.
\end{definition}

\begin{definition}
Given a correspondence $\mathfrak{R}$ between $\mathcal{X}$ and $\mathcal{Y}$, we define its distortion $dis \mathfrak{R}$ by
\[ dis \mathfrak{R} := \sup  \lvert d_{\mathcal{X}}(x,x')  - d_{\mathcal{Y}}(y,y') \rvert,  \]
where the $\sup$ is taken over all pairs $(x,y), (x', y') \in \mathfrak{R}$.
\end{definition}

\begin{definition}
The Gromov-Hausdorff distance between metric spaces $\mathcal{X}$ and $\mathcal{Y}$ is defined by
\[  d_{GH}(\mathcal{X}, \mathcal{Y}) := \frac{1}{2} \inf_{\mathfrak{R}}  dis \mathfrak{R},\] 
where the infimum is taken over all correspondences $\mathfrak{R}$ between $\mathcal{X}$ and $\mathcal{Y}$. 

Also, we say that a sequence of metric spaces $\{ (\mathcal{X}_n , d_{\mathcal{X}_n}) \}_{n \in \N}$ converges in the Gromov-Hausdorff sense towards $(\mathcal{X}, d_{\mathcal{X}})$ as $n \rightarrow \infty$ (and write  $\mathcal{X}_n \converges{GH} \mathcal{X}$), if 
\[\lim_{ n\rightarrow \infty} d_{GH}(\mathcal{X}_n, \mathcal{X})=0.   \]
\label{GromovHausdorff}
\end{definition}

\begin{remark}
The term ``distance" in the above discussion has to be understood informally. First, there is a set theoretic obstruction as we can not talk about the ``set of all metric spaces". Secondly, even if we could talk about such set, $d_{GH}$ would only be a distance on the set of isometry classes.  Both of the above obstructions can be avoided if one considers the space of isometry classes of compact metric spaces.
\end{remark}

One way to estimate the Gromov-Hausdorff distance between two metric spaces is to find a $\varrho$-\textit{isometry} between them.
\begin{definition}
Given two metric spaces $\mathcal{X}$ and $\mathcal{Y}$, a map $F : \mathcal{X} \rightarrow \mathcal{Y} $ is said to be a $\varrho$-isometry if
\begin{enumerate}
	\item  $\sup_{x, x' \in \mathcal{X}}   \lvert  d_{\mathcal{Y}}(F(x), F(x')) - d_{\mathcal{X}}(x, x') \rvert \leq \varrho  $
	\item For every $y \in \mathcal{Y}$ there exists $x \in \mathcal{X}$ such that $ d_{\mathcal{Y}} (y , F(x))  \leq \varrho$.
\end{enumerate}

The first condition says that the \textit{distortion} of $F$ is at most $\varrho$, and the second condition says that the set $F(\mathcal{X})$ is a \textit{$\varrho$-net} in $(\mathcal{Y}, d_{\mathcal{Y}})$.

\end{definition}

It is straightforward to check that if one can find a $\varrho$-isometry between $\mathcal{X}$ and $\mathcal{Y}$, then
\[ d_{GH}(\mathcal{X}, \mathcal{Y}) \leq 2 \varrho.\] 
From this observation it follows that it suffices to construct a $\varrho$-isometry between two metric spaces to bound their Gromov-Hausdorff distance.

\subsection{Main results}
\label{Sec:main}

The main results of the paper are the following.

\begin{theorem}
	\label{thm:main}
For each $n \in \N$ let $X_n$ be a subset of $\T^d$ with cardinality $n$ and denote by $\nu_n$ the empirical measure associated to $X_n$. Let $T_n$ be the optimal, in the $\infty$-OT sense, transportation map between $\nu$ (the uniform distribution on $\T^d$) and $\nu_n$, and let $\delta_n$ be 
\[ \delta_n := \lVert T_n - Id \rVert_\infty.  \]
We assume that $\delta_n \rightarrow 0$ as $n \rightarrow \infty$. Let $\eta$ be as in \eqref{eta} and construct the distance $W_n$ on $\mathcal{P}(X_n)$ as in Definition \ref{def:DiscreteWass} where $\theta$ satisfies conditions (1)-(6) in Subsection \ref{Sec:DisWass} and $\veps_n$ satisfies
\begin{equation}
 \veps_n \rightarrow 0 , \quad \frac{\delta_n}{\veps_n}\rightarrow 0, \quad \veps_n\diamP \rightarrow 0 ,  \quad \text{ as } n \rightarrow \infty,
 \label{Scalingsveps}
 \end{equation}
where $\diamP$ is the diameter of the discrete Wasserstein space on the complete graph with $N=n\epsilon_n^d$ points as defined in Example \ref{Example1}. Then,
\[ (\mathcal{P}(X_n), W_n) \converges{GH} (\mathcal{P}(\T^d),
 \frac{1}{\sqrt{\alpha_d\sigma_\eta}}W), \quad \text{as } n \rightarrow \infty ,\]
where $\alpha_d$ is the volume of the unit ball in $\R^d$, $\sigma_\eta$ is as in \eqref{sigmaeta} and $W$ is the Wasserstein metric on $\mathcal{P}(\T^d)$.

In particular, when the interpolating function $\theta$ is the logarithmic mean, the conditions on $\veps_n$ read
\begin{equation*}
\veps_n \rightarrow 0 , \quad \frac{\delta_n}{\veps_n}\rightarrow 0, \quad \veps_n\sqrt{\log(n \veps_n^d)} \rightarrow 0 ,  \quad \text{ as } n \rightarrow \infty.
\end{equation*}
\end{theorem}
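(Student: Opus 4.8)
The plan is to construct, for each $n$, an explicit $\varrho_n$-isometry $F_n\colon(\mathcal{P}(X_n),W_n)\to(\mathcal{P}(\T^d),\tfrac{1}{\sqrt{\alpha_d\sigma_\eta}}W)$ with $\varrho_n\to0$, and conclude from the inequality $d_{GH}\le2\varrho_n$ recorded at the end of Subsection \ref{Sec:Gromov}. The natural candidate for $F_n$ is the inclusion $\mathcal{P}(X_n)\hookrightarrow\mathcal{P}(\T^d)$ (equivalently, one may work with the correspondence generated by the optimal map $T_n$). With this choice the theorem reduces to two assertions, each of which must be \emph{uniform} in its free variables:
\begin{enumerate}
\item[(i)] (net property) every $\mu\in\mathcal{P}(\T^d)$ is within $W$-distance $o(1)$ of some element of $\mathcal{P}(X_n)$;
\item[(ii)] (small distortion) $\sup_{\mu_n,\tilde\mu_n\in\mathcal{P}(X_n)}\Bigl|\,W_n(\mu_n,\tilde\mu_n)-\tfrac{1}{\sqrt{\alpha_d\sigma_\eta}}\,W(\mu_n,\tilde\mu_n)\,\Bigr|\longrightarrow0$.
\end{enumerate}
Assertion (i) is the easy half. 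Given $\mu$, mollify it (with a compactly supported mollifier) at a scale $s_n\to0$ to obtain $\mu^{s_n}\ll\nu$ and set $\mu_n:=(T_n)_\sharp\mu^{s_n}$; this is a discrete probability measure because $(T_n)_\sharp\nu=\nu_n$ forces $T_n$ to take values in $X_n$ for $\nu$-a.e.\ (hence $\mu^{s_n}$-a.e.)\ point, and $W(\mu,\mu_n)\le W(\mu,\mu^{s_n})+\delta_n\le s_n+\delta_n$, which is uniform in $\mu$ and tends to $0$. The substance of the theorem lies in (ii), which I would prove as a matching pair of one-sided inequalities.

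For the upper bound $W_n(\mu_n,\tilde\mu_n)\le\tfrac{1}{\sqrt{\alpha_d\sigma_\eta}}W(\mu_n,\tilde\mu_n)+o(1)$ I would transport a near-optimal Benamou--Brenier curve $(\mu_t,\vec V_t)$ connecting $\mu_n$ and $\tilde\mu_n$ in $\mathcal{P}(\T^d)$ to a discrete solution, in three steps. First, the \emph{continuous smoothening step} (Section \ref{Sec:Smooth}): regularize the curve in space and time so that the $\mu_t$ become a smooth family of densities $\rho_t$ bounded below by a positive constant, at the cost of an arbitrarily small relative increase of the total action and an $o(1)$ displacement of the endpoints. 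Second, \emph{discretization}: take $\rho_{n,t}$ to be the (renormalized) restriction of $\rho_t$ to $X_n$, and define an edge flux $V_{n,t}(\x_i,\x_j)$ as a suitably rescaled average of $\vec V_t$ along the segment $[\x_i,\x_j]$, arranged so that $\divergence_n V_{n,t}$ is a Riemann-sum approximation of $\divergence(\vec V_t)$; the residual in the discrete continuity equation is absorbed by a corrector field of action $o(1)$, controlled by the a priori estimates of Section \ref{Sec:Apriori}. The heart of this step is the Riemann-sum/Taylor estimate
\[
\int_0^1\mathcal{A}_n(\rho_{n,t},V_{n,t})\,dt\;\le\;\Bigl(\tfrac{1}{\alpha_d\sigma_\eta}+o(1)\Bigr)\int_0^1\mathcal{A}(\rho_t,\vec V_t)\,dt,
\]
in which the constant $\alpha_d\sigma_\eta=\int_{\R^d}\eta(|z|)\,z_1^2\,dz$ is produced by the angular ($\alpha_d$) and radial ($\sigma_\eta$) factors of the nonlocal-to-local limit, and where the hypothesis $\delta_n/\veps_n\to0$ is what makes the empirical sums $\tfrac1n\sum_j$ legitimate Riemann sums at scale $\veps_n$. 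Third, reconnect the perturbed discrete endpoints to $\mu_n$ and $\tilde\mu_n$ inside $\mathcal{P}(X_n)$: this amounts to redistributing mass within balls of radius $\veps_n$, each carrying about $n\veps_n^d$ mutually adjacent cloud points, so by Example \ref{Example1} its $W_n$-cost is $O(\veps_n\,\diamP)\to0$ under \eqref{Scalingsveps} --- this is precisely where $\diamP$ enters.

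The lower bound $\tfrac{1}{\sqrt{\alpha_d\sigma_\eta}}W(\mu_n,\tilde\mu_n)\le W_n(\mu_n,\tilde\mu_n)+o(1)$ goes in the opposite direction: starting from a near-optimal solution $(\rho_{n,t},V_{n,t})$ of the discrete continuity equation, first apply the \emph{discrete smoothening step} (Section \ref{Sec:SmoothDisc}) to push the discrete densities away from $0$ and regularize them in time at a negligible extra $W_n$-cost, and then reconstruct a continuum solution $(\mu_t,\vec V_t)$ --- for instance by mollifying the discretely defined density and flux at a scale comparable to $\veps_n$ --- with endpoints $W$-close to $\mu_n,\tilde\mu_n$ and satisfying the reverse estimate $\int_0^1\mathcal{A}(\rho_t,\vec V_t)\,dt\le(\alpha_d\sigma_\eta+o(1))\int_0^1\mathcal{A}_n(\rho_{n,t},V_{n,t})\,dt$; combined with the (a priori) boundedness of $\diam(\mathcal{P}(X_n),W_n)$ this upgrades a multiplicative $o(1)$ to an additive one. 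I expect \textbf{this reconstruction step to be the main obstacle}: in contrast with the regular-grid setting of \cite{gigmaas}, the cloud $X_n$ is controlled only through $\delta_n$ and the weak uniformity $\delta_n/\veps_n\to0$, so one must transfer the discrete edge fluxes to a bona fide vector field on $\T^d$ while simultaneously (a) handling the possibly degenerate denominators $\theta(\rho_{n,t}(\x_i),\rho_{n,t}(\x_j))$, (b) recovering the sharp constant $\alpha_d\sigma_\eta$, and (c) making every estimate uniform over \emph{all} pairs of discrete measures at once; requirement (c) is what forces a quantitative use of the smoothening steps together with the a priori bounds of Section \ref{Sec:Apriori} rather than a soft compactness argument. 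Assembling the two one-sided bounds yields (ii) with an explicit $\varrho_n=o(1)$, proving the theorem; the logarithmic-mean case follows by inserting the bound \eqref{EstimateLogarithmicMean} for $\diamP$ into \eqref{Scalingsveps}.
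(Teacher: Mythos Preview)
Your overall architecture---two one-sided action inequalities obtained via continuous/discrete smoothening steps, the identification of the constant $\alpha_d\sigma_\eta$ through the nonlocal-to-local computation, and the use of Proposition~\ref{AprioriBound} and $\diamP$ to reconnect endpoints---is exactly the paper's. The one substantive difference is the \emph{direction} of the $\varrho$-isometry: you take the inclusion $\mathcal{P}(X_n)\hookrightarrow\mathcal{P}(\T^d)$, whereas the paper takes $F_n:=Q_n\circ\bH_s:\mathcal{P}(\T^d)\to\mathcal{P}(X_n)$. This is not cosmetic.

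With your choice, assertion (ii) must hold \emph{uniformly over all} $\mu_n,\tilde\mu_n\in\mathcal{P}(X_n)$, including Dirac masses. Your upper bound survives this (the reconnection $W_n(\mu_n,Q_n\bH_s\mu_n)\le C\sqrt{s}+C\veps_n\diamP$ is exactly the paper's ``Wrapping up'' computation and needs no regularity of $\mu_n$). But your lower bound does not, as written. You invoke the discrete smoothening step to push the discrete geodesic away from zero and then control the displacement of the endpoints; however, the endpoint estimates in Propositions~\ref{SmoothDiscrete} and~\ref{a} are of the form
\[
W\bigl(P_n(\rho_{n,0}),P_n(\rho_{n,0}^b)\bigr)\;\le\;C\Bigl(M+\tfrac1a\Bigr)\Lip(\rho_{n,0})\,(b+\delta_n),
\qquad
W\bigl(P_n(\rho_{n,0}),P_n(\rho_{n,0}^a)\bigr)\;\le\;C\,\Lip(\rho_{n,0})\,\tfrac{a}{\sqrt{m_0}},
\]
and both blow up when $\rho_{n,0}$ is singular ($\Lip(\rho_{n,0})\sim n$, $m_0=0$). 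The paper sidesteps this entirely: by building the isometry as $Q_n\circ\bH_s$, the distortion only has to be checked on pairs $\rho_{n,j}=Q_n(\bH_s\mu_j)$, which automatically satisfy $\rho_{n,j}\ge c_1(s)$ and $\Lip(\rho_{n,j})\le C_2(s)$ by Proposition~\ref{HeatFlowTorus}, so Propositions~\ref{SmoothDiscrete} and~\ref{a} apply with uniform constants. The net property then moves to the discrete side (every $\rho_n$ is $W_n$-close to some $Q_n\bH_s\mu$), which is again handled by Proposition~\ref{AprioriBound}.

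So either reverse the direction of your isometry---this is the clean fix and is what the paper does---or, if you insist on the inclusion, you must replace the $L^\infty$-based endpoint estimates of Propositions~\ref{SmoothDiscrete} and~\ref{a} by direct transport bounds (e.g.\ couple $\rho_{n,0}$ with its $b$-mollification to get $W\le Cb$ regardless of $\Lip(\rho_{n,0})$), which is doable but is extra work not in the paper.
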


We now specify the above theorem to the case in which $\x_1, \dots, \x_n$ are i.i.d. samples from the uniform distribution on $\T^d$.

\begin{corollary}
In the context of Theorem \ref{thm:main} suppose that $X_n = \{\x_1, \dots, \x_n \}$ where $\x_1, \dots, \x_n , \dots $ are i.i.d. random variables uniformly distributed on $\T^d$ where $d\geq 2$. Then, with probability one,
    \[ (\mathcal{P}(X_n), W_n) \converges{GH} (\mathcal{P}(\T^d),
    \frac{1}{\sqrt{\alpha_d\sigma_\eta}}W), \quad \text{as } n \rightarrow \infty ,\]
 provided that 
 \begin{equation*}
 \veps_n \rightarrow 0 , \quad \frac{\log(n)^{p_d}}{n^{1/d}\veps_n}\rightarrow 0, \quad \veps_n\diamP \rightarrow 0 ,  \quad \text{ as } n \rightarrow \infty,
 \end{equation*}   
 where $p_2=3/4$ and $p_d = 1/d$ for $d\geq 3$.
\end{corollary}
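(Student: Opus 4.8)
The plan is to deduce the Corollary directly from Theorem~\ref{thm:main}. The scaling requirements \eqref{Scalingsveps} there are $\veps_n\to 0$, $\delta_n/\veps_n\to 0$, and $\veps_n\diamP\to 0$, where $\delta_n=\lVert T_n-Id\rVert_\infty=d_\infty(\nu,\nu_n)$. Two of these, namely $\veps_n\to 0$ and $\veps_n\diamP\to 0$, appear verbatim among the hypotheses of the Corollary and are deterministic. Since $\veps_n$ is deterministic, the entire task reduces to showing that, under the i.i.d.\ uniform sampling model, the random middle condition $\delta_n/\veps_n\to 0$ holds with probability one; and given the hypothesis $(\log n)^{p_d}/(n^{1/d}\veps_n)\to 0$, it suffices to prove that, with probability one, there is a constant $C=C(d)$ with
\[ \delta_n = d_\infty(\nu,\nu_n) \leq C\,\frac{(\log n)^{p_d}}{n^{1/d}} \qquad \text{for all sufficiently large } n. \]

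So the one nontrivial ingredient I would invoke is the known sharp almost-sure rate of convergence of the empirical measure of $n$ i.i.d.\ uniform points on $\T^d$ in the $\infty$-transportation distance. For $d\geq 3$ this follows from a multiscale covering argument: partition $\T^d$ into $\sim n/\log n$ cubes of side $\ell_n\sim(\log n/n)^{1/d}$, apply Bernstein/Chernoff bounds to the $\Bin(n,\ell_n^d)$ counts together with a Borel--Cantelli argument along $n=2^k$ (plus a monotone interpolation to all $n$) to conclude that a.s.\ eventually every such cube contains $\Theta(\log n)$ points, and then assemble an explicit transport plan of displacement $O(\ell_n)$; this yields $\delta_n\lesssim(\log n)^{1/d}/n^{1/d}$, i.e.\ $p_d=1/d$. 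For $d=2$ the naive grid bound is not enough, and one must instead appeal to the (substantially more delicate) minimax grid matching theorem of Leighton and Shor, which gives the sharp $\delta_n\lesssim(\log n)^{3/4}/n^{1/2}$, i.e.\ $p_2=3/4$. All of this is standard and documented (see, e.g., \cite{GTS} and the references therein), so I would cite it rather than reproduce it. The exclusion of $d=1$ is exactly because there the a.s.\ behaviour of $d_\infty(\nu,\nu_n)$ is governed by a law-of-the-iterated-logarithm term and is not of the clean form $(\log n)^{p}/n^{1/d}$.

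With this estimate in hand the remainder is immediate bookkeeping: on the probability-one event where $\delta_n\leq C(\log n)^{p_d}/n^{1/d}$ for all large $n$, the hypothesis of the Corollary gives
\[ \frac{\delta_n}{\veps_n} \leq C\,\frac{(\log n)^{p_d}}{n^{1/d}\veps_n} \longrightarrow 0, \]
so all three conditions in \eqref{Scalingsveps} are satisfied on that event, and Theorem~\ref{thm:main} yields $(\mathcal{P}(X_n),W_n)\converges{GH}(\mathcal{P}(\T^d),\frac{1}{\sqrt{\alpha_d\sigma_\eta}}W)$ there, hence almost surely. The main obstacle — indeed the only place any real work is hidden — is the sharp a.s.\ matching estimate for $\delta_n$, and within it the $d=2$ case; everything else is a direct substitution into Theorem~\ref{thm:main}.
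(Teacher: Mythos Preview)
Your proposal is correct and matches the paper's approach exactly: the paper's proof is a one-line remark that the corollary follows directly from Theorem~\ref{thm:main} together with the known almost-sure rates for $d_\infty(\nu,\nu_n)$ (citing \cite{W8L8}), which is precisely the reduction you carry out. Your additional sketch of how those rates are obtained (multiscale covering for $d\geq 3$, Leighton--Shor for $d=2$) goes beyond what the paper provides but is consistent with it.
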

The corollary follows directly from Theorem \ref{thm:main} when combined with the rates of convergence of empirical measures in the $\infty$-OT distance studied in \cite{W8L8} (see also the references within).

\begin{remark}
As was anticipated in Remark \ref{remarketa}, Theorem \ref{thm:main} and its corollary still hold if the weights $w_{\veps_n}$ are defined using any non-increasing, compactly supported kernel $\eta$. We also remark that $\frac{\delta_n}{\veps_n} \rightarrow 0$ implies that $n \veps_n^d \rightarrow \infty$ as $n \rightarrow \infty$ (see for example the first paragraphs in the proof of Proposition \ref{SmoothDiscrete} below), and so condition $\veps_n\diamP \rightarrow 0$ forces $\veps$ to go to zero with $n$ fast enough. Qualitatively speaking this condition guarantees that the complete-graph-like behavior of the geometric graph at length scale $\veps$ disappears in the limit $n \rightarrow \infty$. 
\end{remark}

\section{Preliminaries}

\subsection{A priori estimates for the discrete Wasserstein distance}
\label{Sec:Apriori}

In this section we establish some a priori estimates for the discrete Wasserstein distance. These estimates will be used in the remainder.

\begin{proposition} For every $\mu_n , \tilde{\mu}_ n \in \mathcal{P}(X_n)$ we have
	\[  W_n( \mu_n, \tilde{\mu}_n) \leq C W(\mu_n, \tilde{\mu}_n ) + C \veps_n  \diam_\theta(  \mathcal{P}(\mathcal{K}_N)  ), \]
	where $N= n \veps_n^d$, $\diam_\theta(  \mathcal{P}(\mathcal{K}_N)  )$ is the diameter of the discrete Wasserstein space from Example \ref{Example1} and  $C>0$ is a constant that depends only on $\theta$ and $d$.  		
	\label{AprioriBound}
\end{proposition}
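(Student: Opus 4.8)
The plan is to interpolate between $\mu_n$ and $\tilde\mu_n$ through two auxiliary measures that are piecewise constant at scale $\veps_n$, controlling the two ``regularization'' pieces by the complete-graph diameter and the middle ``comparison'' piece by the continuum distance $W$. Concretely, I would fix a partition of $\T^d$ into half-open cubes $\{Q_c\}_c$ of side $\veps_n/(2\sqrt d)$, so that $\diam(Q_c)\le\veps_n/2$ and $\diam(Q_c\cup Q_{c'})\le\veps_n$ whenever $Q_c,Q_{c'}$ are face-adjacent; consequently, in the geometric graph any two cloud points lying in a common cube, or in two face-adjacent cubes, are joined by an edge of weight $\veps_n^{-d}$. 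Writing $N_c$ for the number of cloud points in $Q_c$, $m_c:=\mu_n(Q_c)$, and $N:=n\veps_n^d$, the hypothesis $\delta_n/\veps_n\to0$ gives, for $n$ large, $c_d N\le N_c\le N$ for every $c$ with $N_c>0$, since $T_n^{-1}(Q_c)$ lies within $\delta_n$ of $Q_c$. For $\rho\in\mathcal P(X_n)$ let $\bar\rho$ be the measure obtained by redistributing, inside each cube, the $\rho$-mass of that cube uniformly over the cloud points of the cube. By the triangle inequality,
\[ W_n(\mu_n,\tilde\mu_n)\;\le\;W_n(\mu_n,\bar\mu_n)+W_n(\bar\mu_n,\bar{\tilde\mu}_n)+W_n(\bar{\tilde\mu}_n,\tilde\mu_n). \]

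For the regularization pieces, observe that $\mu_n$ and $\bar\mu_n$ differ only by an internal redistribution inside each cube, so they can be connected by a solution of the discrete continuity equation whose flux lives only on within-cube edges; the equation then decouples over the cubes and the total action equals the sum of the per-cube actions. On a cube $Q_c$ all weights equal $\veps_n^{-d}$, each cloud point carries reference mass $1/n$, and the scale is $\veps_n$, so the rescaling $\rho\mapsto\rho/m_c$, $V\mapsto\tfrac{N_c^2}{m_c n\veps_n^{d+1}}V$ identifies the within-cube problem with the complete-graph problem of Example~\ref{Example1} and gives the exact identity $\mathcal A_n^{(Q_c)}(\rho,V)=\tfrac{m_c\veps_n^{d+2}}{N_c}\,\mathcal A_{N_c}(\rho/m_c,\tfrac{N_c^2}{m_c n\veps_n^{d+1}}V)$. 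Hence the $Q_c$-contribution to $W_n(\mu_n,\bar\mu_n)^2$ is at most $\tfrac{m_c\veps_n^{d+2}}{N_c}\diam_\theta(\mathcal P(\mathcal K_{N_c}))^2$, and summing over $c$ — using $N_c\ge1$ on cubes with $m_c>0$, $\sum_c m_c=1$, $N_c\le N$, the non-decreasing dependence of $\diam_\theta(\mathcal P(\mathcal K_m))$ on $m$ (a point-splitting comparison), and $\veps_n\le1$ — yields $W_n(\mu_n,\bar\mu_n)\le\veps_n^{(d+2)/2}\diam_\theta(\mathcal P(\mathcal K_N))\le\veps_n\diam_\theta(\mathcal P(\mathcal K_N))$, and likewise for the $\tilde\mu_n$-piece.

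It then remains to bound the comparison piece by $C\,W(\bar\mu_n,\bar{\tilde\mu}_n)$, which, since $W(\bar\mu_n,\bar{\tilde\mu}_n)\le W(\mu_n,\tilde\mu_n)+\veps_n$ and $\diam_\theta(\mathcal P(\mathcal K_N))$ is bounded below by a positive constant for $n$ large, finishes the proof. For this I would take a near-optimal curve $(\rho_t,\vec V_t)$ for the continuum Benamou--Brenier problem in flux form between $\bar\mu_n$ and $\bar{\tilde\mu}_n$ and transfer it to the graph: set $\rho_{n,t}(\x_i):=\tfrac{n}{N_{c(i)}}\int_{Q_{c(i)}}\rho_t$ and define $V_{n,t}(\x_i,\x_j)$, supported on pairs of points in the same or in face-adjacent cubes, out of the flux of $\vec V_t$ across the interface of $Q_{c(i)}$ and $Q_{c(j)}$, arranged so that $(\rho_{n,t},V_{n,t})$ solves the discrete continuity equation. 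Because the cubes are balanced, the discrete densities $\rho_{n,t}(\x_i)$ are comparable to the cube-averages of $\rho_t$, and since $\theta(a,b)\ge\min(a,b)$ a Jensen-type estimate bounds $\mathcal A_n(\rho_{n,t},V_{n,t})$ by a constant depending only on $d$ times $\mathcal A(\rho_t,\vec V_t)$, giving $W_n(\bar\mu_n,\bar{\tilde\mu}_n)^2\le C\,W(\bar\mu_n,\bar{\tilde\mu}_n)^2$ after integrating in $t$.

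I expect the comparison piece to be the main obstacle: making the transfer of the continuum curve to the graph precise so that the discrete continuity equation holds exactly and the discrete action does not blow up where $\rho_t$ is small. This is exactly where a preliminary mollification of $(\rho_t,\vec V_t)$ at a scale comparable to $\veps_n$, together with the balance $N_c=\Theta(N)$ (hence the hypothesis $\delta_n/\veps_n\to0$), is needed; the two regularization pieces are, by comparison, elementary once the complete-graph rescaling identity above is in place.
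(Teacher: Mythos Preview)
Your decomposition into a regularization step (controlled by the complete-graph diameter) and a comparison step is natural, and your treatment of the regularization pieces is essentially the content of Lemma~\ref{NonLocaleffect} in the paper, carried out cube by cube rather than ball by ball. The genuine gap is in the comparison piece, and you have correctly identified it yourself: you need $W_n(\bar\mu_n,\bar{\tilde\mu}_n)\le C\,W(\bar\mu_n,\bar{\tilde\mu}_n)$ for measures that may vanish on many cubes, and you propose to get it by discretizing a continuum Benamou--Brenier curve. This is where the argument breaks. The densities $\rho_t$ along such a curve need not be bounded away from zero, so $\theta(\rho_{n,t}(\x_i),\rho_{n,t}(\x_j))$ can vanish and the discrete action is uncontrolled. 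Mollifying at scale $\sim\veps_n$ does not rescue this: the constants coming from the mollifier (Lipschitz and $C^2$ bounds on the smoothed flux, and the lower bound on the smoothed density) scale like inverse powers of $\veps_n$, so the error terms do not close; and controlling the $W_n$-distance between $\bar\mu_n$ and its mollified version is exactly the a~priori bound you are trying to prove, so the argument becomes circular. In short, transferring a continuum curve to the graph with uniform constants is essentially as hard as the proposition itself.

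The paper sidesteps this entirely. It first reduces to Dirac masses via the convexity inequality
\[
W_n^2(\mu_n,\tilde\mu_n)\;\le\;\min_{\pi\in\Gamma(\mu_n,\tilde\mu_n)}\int\!\!\int W_n^2(\delta_{\x_i},\delta_{\x_j})\,d\pi(\x_i,\x_j),
\]
and then, for two Diracs $\delta_{\x_i},\delta_{\x_j}$, builds an \emph{explicit discrete} path: a chain of points $x_0=\x_i,x_1,\dots,x_k=\x_j$ along (approximately) the Euclidean segment, with $|x_l-x_{l+1}|\in(\veps_n/4,\veps_n/2)$, and shows by a direct two-blob computation (Lemma~\ref{LemmaApriori3}) that $W_n(\mu_n^l,\mu_n^{l+1})\le C_\theta\veps_n$ for the uniform measures $\mu_n^l$ on $B(x_l,\veps_n/8)\cap X_n$. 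Summing these telescopes to $C|\x_i-\x_j|$, and the two endpoint errors $W_n(\delta_{x_l},\mu_n^l)$ are handled by the complete-graph lemma you already have. No continuum curve is ever discretized; the path is built by hand at the graph level, which is why no positivity of densities is required. If you want to repair your argument, the cleanest fix is to replace your comparison step by this reduction to point masses.
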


\begin{remark}
The above proposition says that the discrete Wasserstein distance is controlled by the usual Wasserstein distance \textit{plus} an extra term which can be interpreted as a \textit{non-local effect} term.
\end{remark}

\nc
Before proving Proposition \ref{AprioriBound} let us introduce some notation and establish some preliminary results. For every $i=1, \dots, n$ let $\mu_{n}^i \in \mathcal{P}(X_n)$ be the uniform distribution on $B(\x_i , \veps_n / 8 ) \cap X_n $, where $B(\x_i, \veps_n/ 8)$ is the Euclidean ball centered at $\x_i$ with radius $\veps_n /8$. We let $X^i:= \supp(\mu_n^i)$ and let $N_i$ be the number of elements in $X^i$. Observe that thanks to the assumption $\frac{\delta_n}{\veps_n} \rightarrow 0$ we have
 \begin{equation}
 cn \veps_n^d \leq N_i \leq C n \veps_n^d, \quad \forall i=1, \dots, n
 \label{EstimateNi}
 \end{equation}
 where $c,C$ are constants independent of $n$. We split the proof of Proposition \ref{AprioriBound} into three lemmas.

\begin{lemma}
	\label{NonLocaleffect}
For every $i=1, \dots, n$ we have
\[  W_n(\delta_{\x_i}, \mu_n^i)  \leq C \veps_n \diam_\theta(\mathcal{P}(\mathcal{K}_{N_i})),  \]
where $C$ is a constant independent of $i$ or $n$, and where $\diam_\theta(\mathcal{P}(\mathcal{K}_{N_i}))$ is defined in Example \ref{Example1}.
\end{lemma}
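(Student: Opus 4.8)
\textbf{Proof strategy for Lemma \ref{NonLocaleffect}.}

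The plan is to build an explicit solution to the discrete continuity equation connecting $\delta_{\x_i}$ and $\mu_n^i$ that lives entirely on the point cloud $X^i := \supp(\mu_n^i) = B(\x_i,\veps_n/8)\cap X_n$, and then to bound its action by comparing it with a geodesic in the complete-graph Wasserstein space $\mathcal{P}(\mathcal{K}_{N_i})$. The key observation is that for any two points $\x_j,\x_k \in X^i$ we have $\lvert \x_j - \x_k\rvert < \veps_n/4 < \veps_n$, so $\eta(\lvert \x_j-\x_k\rvert/\veps_n) = 1$; thus, restricted to $X^i$, the geometric graph is literally the complete graph on $N_i$ vertices with all weights equal to $1$. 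The only discrepancies with the setting of Example \ref{Example1} are the normalizations: in $\mathcal{P}(\mathcal{K}_{N_i})$ the measure is uniform (mass $1/N_i$ per point), the weights are $1/N_i$, and $\veps = 1$; in our geometric setting the empirical measure $\nu_n$ gives mass $1/n$ per point, the weight is $1$, and $\veps = \veps_n$. So the first step is to track how the discrete action transforms under these rescalings.

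First I would take a geodesic $t\mapsto (\bar\rho_t, \bar V_t)$ in $\mathcal{P}(\mathcal{K}_{N_i})$ connecting the Dirac mass at (the vertex corresponding to) $\x_i$ to the uniform measure, realizing $W_{N_i}(\cdot,\cdot) = \diam$-order quantity — more precisely with total action $(W_{N_i}(\delta_{\x_i}, \text{unif}))^2 \le \diam_\theta(\mathcal{P}(\mathcal{K}_{N_i}))^2$. Then I would transplant it: define $\rho_t(\x_j) := \bar\rho_t(\x_j)$ for $\x_j \in X^i$ (this is automatically a valid discrete density on $X_n$ since it is supported on $X^i$ and sums to one — note the point masses match because both $\delta_{\x_i}$ and $\mu_n^i$ assign the corresponding masses) and set $V_t := c_n \bar V_t$ on $X^i\times X^i$ and zero elsewhere, choosing the constant $c_n$ so that the discrete continuity equation in the geometric setting is satisfied. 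Comparing $\divergence_n$ with $\divergence_{N_i}$ on $X^i$: the geometric divergence has prefactor $\frac{1}{n\veps_n^{d+1}}\sum_{\x_k\in X^i}(\cdots)\cdot 1\cdot$, while the complete-graph divergence has prefactor $\frac{1}{1}\sum \cdots \frac{1}{N_i}$; so $V_t = \frac{n\veps_n^{d+1}}{N_i} \bar V_t$ does the job, i.e. $c_n = n\veps_n^{d+1}/N_i \asymp \veps_n$ by \eqref{EstimateNi}.

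Next I would compute the action $\mathcal{A}_n(\rho_t, V_t)$ and express it in terms of $\mathcal{A}_{N_i}(\bar\rho_t,\bar V_t)$. Using positive homogeneity of $\theta$ (so that $\theta(\rho_t(\x_j),\rho_t(\x_k))$ is exactly $\theta(\bar\rho_t(\x_j),\bar\rho_t(\x_k))$ — the masses are identical, no rescaling needed here), and plugging in the prefactors, one gets
\[ \mathcal{A}_n(\rho_t,V_t) = \frac{1}{n^2\veps_n^d}\sum_{\x_j,\x_k\in X^i}\frac{c_n^2 (\bar V_t(\x_j,\x_k))^2}{\theta(\bar\rho_t(\x_j),\bar\rho_t(\x_k))} = \frac{c_n^2 N_i^2}{n^2\veps_n^d}\,\mathcal{A}_{N_i}(\bar\rho_t,\bar V_t), \]
where the factor $N_i^2$ comes from converting the $\gamma(\x)\gamma(\y) = 1/N_i^2$ weights implicit in $\mathcal{A}_{N_i}$ (and the weight $w = 1/N_i$ versus $1$) back to the geometric normalization; the bookkeeping should be done carefully but is routine. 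Substituting $c_n = n\veps_n^{d+1}/N_i$ collapses this to $\mathcal{A}_n(\rho_t,V_t) = \veps_n^2\,\mathcal{A}_{N_i}(\bar\rho_t,\bar V_t)$. Integrating in $t$ and taking the infimum over geodesics gives $(W_n(\delta_{\x_i},\mu_n^i))^2 \le \veps_n^2 (W_{N_i}(\delta_{\x_i},\text{unif}))^2 \le \veps_n^2 \diam_\theta(\mathcal{P}(\mathcal{K}_{N_i}))^2$, which is the claim (with $C=1$, or some harmless constant if the bookkeeping produces one).

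The main obstacle I expect is purely the normalization bookkeeping: correctly matching the three different conventions (measure weights $1/N_i$ vs $1/n$, graph weights $1/N_i$ vs $1$, length scale $1$ vs $\veps_n$) through both the divergence operator and the action functional, and confirming that all the factors of $N_i$, $n$, and $\veps_n$ conspire to leave exactly $\veps_n^2$. There is no analytic difficulty — the essential geometric input (that $X^i$ induces a complete graph because its diameter is below $\veps_n$) is immediate — so the proof is a careful change-of-variables argument rather than anything subtle. One minor point to be careful about: the geodesic $\bar\rho_t$ stays nonnegative, so $\rho_t$ is a genuine element of $\mathcal{P}(X_n)$ for all $t$, and the convention for $0/0$ in the action is respected wherever $\bar V_t$ and $\theta$ vanish together.
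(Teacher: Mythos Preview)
Your approach is exactly the paper's: transplant a $\mathcal{P}(\mathcal{K}_{N_i})$ geodesic onto $X^i\subset X_n$ and track the rescalings. There is one genuine slip in the bookkeeping you flagged as routine. The assignment $\rho_t(\x_j):=\bar\rho_t(\x_j)$ is \emph{not} a density in $\mathcal{P}(X_n)$: densities in $\mathcal{P}(\mathcal{K}_{N_i})$ are taken with respect to the uniform measure with mass $1/N_i$ per vertex, so $\sum_{\x_j\in X^i}\bar\rho_t(\x_j)=N_i$, whereas a density in $\mathcal{P}(X_n)$ must satisfy $\frac{1}{n}\sum\rho_t=1$. Concretely, the Dirac at $\x_i$ has density $N_i$ in $\mathcal{K}_{N_i}$ but density $n$ in $X_n$. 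The correct transplant is $\rho_{n,t}(\x_j):=\tfrac{n}{N_i}\bar\rho_t(\x_j)$, and then matching the continuity equation forces $V_{n,t}:=\tfrac{n^2\veps_n^{d+1}}{N_i^2}\bar V_t$ (an extra factor $n/N_i$ compared to your $c_n$). With these scalings, the homogeneity of $\theta$ produces an additional $N_i/n$ in the denominator of the action, and the final ratio comes out as $\tfrac{n\veps_n^d}{N_i}\,\veps_n^2$ rather than exactly $\veps_n^2$; the estimate \eqref{EstimateNi} then gives the constant $C$. Once this rescaling is corrected, your argument is identical to the paper's.
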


\begin{proof}

Consider the point cloud $X^i $. Notice that 
\[ \omega_{\veps_n}(\y, \z) =\frac{1}{\veps_n^d}, \quad \forall \y , \z \in X^i\]
and in particular the weighted graph $(X^i , \omega_{\veps_n})$ is simply a rescaled version of the complete graph with $N_i$ nodes. This suggests that in order to estimate $W_n(\delta_i , \mu_n^i)$ we should compute the action of a rescaled version of a geodesic in $\mathcal{P}(\mathcal{K}_{N_i})$  connecting a point mass with the uniform distribution on $X^i$. Consider then a curve $t \in [0,1] \mapsto (\tilde{\rho}_t, \tilde{V}_t)$ with $\tilde{\rho}_t \in \mathcal{P}(\mathcal{K}_{N_i})$ for all $t \in [0,1]$, $\tilde{\rho}_0$ is the discrete density of a point mass at $\x_i$,  $\tilde{\rho}_1$ is the discrete density of the uniform distribution on $X^i$, 
\[ \frac{d}{dt} \tilde{\rho}_t(\y) + \frac{1}{N_i} \sum_{\z \in X^i}( \tilde{V}_t(\y, \z) - \tilde V_t(\z, \y) )=0 , \quad \forall \y \in X^i, \] 
and corresponding total action satisfying
\[ \frac{1}{N_i^2} \sum_{\y, \z \in X^i} \int_{0}^1  \frac{\tilde{V}_t(\y, \z)^2 }{\theta(\tilde{\rho}_t(\y), \tilde{\rho}_t (\z) )} dt \leq \diam_\theta(\mathcal{P}(\mathcal{K}_{N_i}))^2 . \]

We define $\rho_{n,t} \in \mathcal{P}(X_n)$ by
\[ \rho_{n,t}(\y):= \frac{  n \tilde{\rho}_{t}(\y)  }{N_i}, \quad \y \in X^i, \]
and we set $\rho_{n,t}$ to be zero outside of $X^i$. Similarly, we define the discrete vector field $V_{n,t}$ by
\[  V_{n,t}(\y, \z) :=  \frac{n^2 \veps_n^{d+1}}{N_i^2}  \tilde{V}_t(\y, \z) , \quad (\y, \z) \in X^i \times X^i, \]
and we set $V_{n,t}$ to be zero outside of $X^i \times X^i$. With these definitions it is then straightforward to check that $t \mapsto (\rho_{n,t}, V_{n,t})$ satisfies the discrete continuity equation, that $\rho_{n,0}$ is the discrete density of $\delta_{\x_i}$, that $\rho_{n,1}$ is the discrete density of $\mu_n^i$, and finally that
\[ W_n^2(\delta_{\x_i}, \mu_n^i) \leq \int_{0}^{1} \mathcal{A}_{n}(\rho_{n,t}, V_{n,t}) dt  \leq  \left( \frac{n \veps_n^d}{N_i} \right) \veps_n^2 (\diam_\theta( \mathcal{K}_{N_i}) )^2 \leq C \veps_n^2  (\diam_\theta(\mathcal{K}_{N_i}) )^2,  \]
where the last inequality follows from \eqref{EstimateNi}. The desired estimate now follows.

\end{proof}

\nc

Next, we use use the previous result to bound  $W_n(\delta_{\x_i},\delta_{\x_j})$ for points $\x_i$ and $\x_j$ that are within distance $\veps_n$ from each other.  To estimate $W_n(\delta_{\x_i},\delta_{\x_j})$ for points $\x_i$, $\x_j$ that are far away from each other we use a discretization of the Euclidean shortest path connecting the points. 
\nc

 \begin{lemma}
 	Let $\hat{\alpha}$ be the real number in $[0, \pi/2]$ for which $\sin(\hat{\alpha} ) = 1/8$. Let $x \in \T^d$ and let $\vec{u}$ be a unit vector in $\R^d$. 
 	Consider the set 
 	\[  \mathcal{C}_n(x, \vec{u}) := \{ y \in \T^d \: : \: \frac{\veps_n}{4}< \lvert y - x\rvert < \frac{\veps_n}{2} , \quad \alpha_{y} \in [0, \hat{\alpha}]    \}, \]
 	where $\alpha_y$ is the angle between the vectors $(y-x)$ and $\vec{u}$. Then, 
 	\[  \mathcal{C}_n(x, \vec{u}) \cap X_n \not = \emptyset .\]
 	\label{LemmaAuxGraphDistance1}
 \end{lemma}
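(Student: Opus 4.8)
The plan is to exhibit, inside the region $\mathcal{C}_n(x,\vec u)$, a genuine metric ball of $\T^d$ whose radius is a fixed fraction of $\veps_n$, and then to invoke the $\infty$-OT bound $\delta_n=\lVert Id-T_n\rVert_{L^\infty(\nu)}$ to force that ball to contain a point of $X_n$. The mechanism for the second step is the elementary observation that, since $T_{n\sharp}\nu=\nu_n$ and $T_n$ displaces $\nu$-a.e.\ point by at most $\delta_n$, for every ball $B(z,\rho)\subseteq\T^d$ with $\rho>\delta_n$ one has
\[
\nu_n(B(z,\rho))=\nu(T_n^{-1}(B(z,\rho)))\ \ge\ \nu(B(z,\rho-\delta_n))=\alpha_d(\rho-\delta_n)^d>0,
\]
because $|w-z|\le\rho-\delta_n$ and $|w-T_n(w)|\le\delta_n$ imply $T_n(w)\in B(z,\rho)$. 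Hence $B(z,\rho)\cap X_n\neq\emptyset$, and it suffices to produce $z$ and $\rho>\delta_n$ with $B(z,\rho)\subseteq\mathcal{C}_n(x,\vec u)$.

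For the geometric step I would take $z:=x+\tfrac{3\veps_n}{8}\vec u$ and $\rho:=\tfrac{\veps_n}{24}$, working for $n$ large enough that $\veps_n<\tfrac12$, so that $B(x,\veps_n/2)$ is isometric to a Euclidean ball and all computations below are the Euclidean ones. Two constraints must be checked. The radial one: if $|y-z|\le\rho$ then $|y-x|\in[\tfrac{3\veps_n}{8}-\rho,\tfrac{3\veps_n}{8}+\rho]=[\tfrac{\veps_n}{3},\tfrac{5\veps_n}{12}]\subseteq(\tfrac{\veps_n}{4},\tfrac{\veps_n}{2})$. The angular one: writing $y-x=\tfrac{3\veps_n}{8}\vec u+w$ with $|w|\le\rho$, the component of $y-x$ orthogonal to $\vec u$ has norm at most $|w|$, while $|y-x|\ge\tfrac{3\veps_n}{8}-|w|$, so
\[
\sin\alpha_y\ \le\ \frac{|w|}{\tfrac{3\veps_n}{8}-|w|}\ \le\ \frac{\veps_n/24}{3\veps_n/8-\veps_n/24}\ =\ \frac18\ =\ \sin\hat\alpha ,
\]
hence $\alpha_y\in[0,\hat\alpha]$. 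Therefore $B(z,\rho)\subseteq\mathcal{C}_n(x,\vec u)$.

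Finally, since $\delta_n/\veps_n\to 0$, for all $n$ large enough — with a threshold depending only on the sequences $(\veps_n)$, $(\delta_n)$, not on $x$ or $\vec u$ — we have $\delta_n<\tfrac{\veps_n}{24}=\rho$, so the ball $B(z,\rho)$ meets $X_n$ and a fortiori $\mathcal{C}_n(x,\vec u)\cap X_n\neq\emptyset$. There is no genuinely hard step here; the only point requiring a little care is choosing the center and radius so that the radial and angular inclusions both hold with enough slack to absorb the $\delta_n$-perturbation coming from the transport map — which is precisely why the concrete radius $\veps_n/24$ (rather than something closer to the natural scale $\veps_n/8$) is used.
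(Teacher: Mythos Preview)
Your proof is correct and follows essentially the same approach as the paper: both place the center at $z=x+\tfrac{3\veps_n}{8}\vec u$, observe that a ball of radius $\veps_n/24$ about $z$ lies inside $\mathcal{C}_n(x,\vec u)$, and then use $\delta_n<\veps_n/24$ together with the $\infty$-OT bound to force a data point into that ball. Your write-up is in fact more explicit than the paper's, which simply asserts $\dist(z,\partial\mathcal{C}_n)\ge\veps_n/24$ and argues by contradiction, whereas you verify the radial and angular inclusions separately and phrase the transport step directly via $\nu_n(B(z,\rho))\ge\nu(B(z,\rho-\delta_n))>0$.
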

 \begin{proof}
 	To show this, let $\tilde{x}:= x + \frac{3 \veps_n}{8}\vec{u}$. It is straightforward to see that 
 	\[  dist(\tilde{x}, \partial \mathcal{C}_n ) \geq \frac{3 \veps_n}{8} \sin(\hat{\alpha}) \geq \frac{\veps_n}{24} > \delta_n,\]
 	where the last inequality follows from the assumptions on $\veps_n$. If $X_n \cap  \mathcal{C}_n(x, \vec{u}) $ was the empty set, this would contradict the fact that $\delta_n$ is the $\infty$-transportation distance between $\nu$ and $\nu_n$.
 \end{proof}

\begin{lemma}Let $\x_i, \x_j \in X_n$ be such that $  \lvert \x_i - \x_j \rvert < \frac{\veps_n}{2}$. Then, 
\[  W_n(\mu_n^i, \mu_n^j) \leq C_\theta \veps_n,\]
for some finite constant $C_\theta>0$ that depends on the interpolating function $\theta$. 
\label{LemmaApriori3}
\end{lemma}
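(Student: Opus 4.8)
The plan is to exploit that, since $\x_i$ and $\x_j$ are within distance $\veps_n/2$, the union $Y:=X^i\cup X^j$ has Euclidean diameter strictly less than $\veps_n$: any $\y\in X^i$ and $\z\in X^j$ satisfy $\lvert\y-\z\rvert\le\lvert\y-\x_i\rvert+\lvert\x_i-\x_j\rvert+\lvert\x_j-\z\rvert<\veps_n/8+\veps_n/2+\veps_n/8<\veps_n$. Hence all pairs of points of $Y$ are joined by an edge of weight $\veps_n^{-d}$, so the weighted graph induced on $Y$ is, up to the uniform weight $\veps_n^{-d}$ and up to the reference measure assigning mass $1/n$ rather than $1/M$ to each vertex, a copy of the complete graph $\mathcal{K}_M$ with $M:=\card(Y)$ vertices. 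Both $\mu_n^i$ and $\mu_n^j$ are supported in $Y$, so I may bound $W_n(\mu_n^i,\mu_n^j)$ from above by restricting to discrete continuity equations supported on $Y$. Repeating the rescaling computation from the proof of Lemma \ref{NonLocaleffect} with $N_i$ replaced by $M$, any curve $t\mapsto(\tilde\rho_t,\tilde V_t)$ in $\mathcal{P}(\mathcal{K}_M)$ connecting two densities supported on $Y$ lifts to a solution of the discrete continuity equation on $X_n$ supported on $Y$ whose total action equals $\tfrac{n\veps_n^d}{M}\,\veps_n^2$ times the total action of $(\tilde\rho_t,\tilde V_t)$ in $\mathcal{P}(\mathcal{K}_M)$. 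Since $N_i\le M$ and $N_i\ge c\,n\veps_n^d$ by \eqref{EstimateNi}, the prefactor $n\veps_n^d/M$ is bounded by a constant depending only on $d$. The lemma therefore reduces to the claim: writing $W_{\mathcal{K}_M}$ for the discrete Wasserstein distance on $\mathcal{P}(\mathcal{K}_M)$, the quantity $W_{\mathcal{K}_M}$ between the uniform measure on $X^i$ and the uniform measure on $X^j$ is bounded by a constant depending only on $\theta$, uniformly in $M$ and in the two subsets.

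To prove this claim I would route through the stationary measure $\mathds{1}$ of $\mathcal{K}_M$, which (as $Y=X^i\cup X^j$) is exactly the uniform measure on $X^i\cup X^j$; by the triangle inequality it suffices to bound $W_{\mathcal{K}_M}$ between $\mathds{1}$ and the uniform measure on $S$ for $S\in\{X^i,X^j\}$. Writing $p:=\card(S)$, I build an explicit curve: along a time variable $s$ run from $0$ to $1$, interpolate so that the density on each vertex of $S$ decreases from $1/p$ to $1/M$ while the density on each vertex of $Y\setminus S$ increases from $0$ to $1/M$, and realize this via the product coupling between the ``lost'' and ``gained'' mass, i.e.\ send an equal, time-dependent flux along every oriented edge from $S$ to $Y\setminus S$. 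A direct computation of the action of this curve, using the monotonicity and positive homogeneity of $\theta$ to obtain the pointwise lower bound $\theta(\rho_t(\y),\rho_t(\z))\ge M^{-1}\theta(1-s,s)$ on the edges carrying flux, yields a total action at most $\tfrac{1}{2p}\int_0^1 s'(t)^2/\theta(1-s(t),s(t))\,dt$. Optimizing the time reparametrization $s=s(t)$ by Cauchy--Schwarz in $t$ — which is where assumption (6) enters — bounds this by $\tfrac{1}{2p}\big(\int_0^1\theta(1-s,s)^{-1/2}\,ds\big)^2=\tfrac{1}{2p}C_\theta^2$ with $C_\theta$ the finite constant of assumption (6). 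Since $p\ge1$, this gives $W_{\mathcal{K}_M}(\mathds{1},\text{uniform on }S)\le C_\theta/\sqrt2$, and the claim follows by the triangle inequality.

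Combining the two parts gives $W_n(\mu_n^i,\mu_n^j)\le C\,\veps_n\,W_{\mathcal{K}_M}(\text{uniform on }X^i,\text{uniform on }X^j)\le C\,C_\theta\,\veps_n$, which is the assertion, with the final constant depending only on $\theta$ (through assumption (6)) and on $d$ (through \eqref{EstimateNi}).

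The step I expect to be the only genuine obstacle is the second one. The soft bound $W_{\mathcal{K}_M}(\text{uniform on }X^i,\text{uniform on }X^j)\le\diam_\theta(\mathcal{P}(\mathcal{K}_M))$ is useless here, because that diameter grows with $n$ (like $\sqrt{\log M}$ for the logarithmic mean): it encodes the cost of concentrating the whole mass at a single vertex, which is not what is happening. The content of the lemma is that moving between two \emph{nearly uniform} measures is cheap — of order $\veps_n$, uniformly in $n$ — and extracting this requires the explicit low-action curve above (alternatively, for $\theta$ of the special form \eqref{SpecialFormTheta}, a sharper Talagrand-type estimate in terms of the relative entropy of the uniform measure on $X^i$ with respect to $\mathds{1}$, which is $O(1)$ precisely because $\card(X^i)$ and $M$ are comparable by \eqref{EstimateNi}). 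The delicate bookkeeping is to organize the construction — routing through $\mathds{1}$, the product coupling, the optimal time change — so that exactly the constant $C_\theta$ of assumption (6) comes out, and to check that the degenerate case where $\theta$ vanishes at the boundary is harmless (the fluxes vanish there as well, and assumption (6) keeps the relevant integral finite).
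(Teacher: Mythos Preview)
Your strategy is sound and genuinely different from the paper's. The paper first reduces (via Lemma~\ref{LemmaAuxGraphDistance1}) to the case $\lvert\x_i-\x_j\rvert>\veps_n/4$, so that $X^i\cap X^j=\emptyset$, and then models the transport directly as a two-point space between the two disjoint blobs, quoting Maas's Lemma~2.3/Theorem~2.4 for the finite cost. You instead embed both measures into the complete graph on $Y=X^i\cup X^j$ and route through the uniform measure on $Y$; this avoids the disjointness reduction and is arguably cleaner.

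There is, however, a bookkeeping slip in your explicit-curve computation. Working with densities in $\mathcal{P}(\mathcal{K}_M)$ (so the uniform measure on $S$ has density $M/p$, not $1/p$), the product-coupling curve you describe has action
\[
\mathcal{A}_{\mathcal{K}_M}(\rho_t,V_t)=\frac{M-p}{2p}\cdot\frac{s'(t)^2}{\theta\bigl((1-s)M/p+s,\,s\bigr)}\le\frac{M-p}{2p}\cdot\frac{s'(t)^2}{\theta(1-s,s)},
\]
so after optimizing the reparametrization the total action is at most $\tfrac{M-p}{2p}\,C_\theta^2$, not $\tfrac{1}{2p}\,C_\theta^2$. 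Consequently the conclusion ``since $p\ge1$'' is not what finishes the argument: what you actually need is that $M/p$ is bounded, which follows because $p=N_i\ge c\,n\veps_n^d$ and $M\le N_i+N_j\le 2C\,n\veps_n^d$ by \eqref{EstimateNi}. With this correction your claim $W_{\mathcal{K}_M}(\text{unif on }X^i,\text{unif on }X^j)=O_\theta(1)$ stands, and the lemma follows exactly as you outline. (Your mass-vs-density confusion and the stray $M^{-1}$ in the lower bound on $\theta$ are symptoms of the same slip; once you work consistently with densities the factor disappears.)
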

\begin{proof}
By Lemma \ref{LemmaAuxGraphDistance1} and the triangle inequality it is enough to prove the result assuming that $ \frac{\veps_n}{4} < \lvert \x_i - \x_j \rvert$. In this case we have $X^i \cap X^j = \emptyset$ (i.e. the supports of the measures $\mu_n^i$ and $\mu_n^j$ are disjoint). We construct a path connecting $\mu_n^i$ and $\mu_n^j$ such that at every point in time, the mass in $X^i$ is uniformly distributed, and the same is true for $X^j$.  The mass between the ``blubs'' $X^i$ and $X^j$ is exchanged following the construction for the $2$-point space from Section 2 in \cite{Maas}. Points in $X_n$ outside of $X^i\cup X^j$ play no role in the transportation scheme.  More precisely, take $p=q=1/2$ in Lemma 2.3 and Theorem 2.4 in \cite{Maas}. It follows that there exist differentiable functions $\gamma : [0,1] \rightarrow [-1,1]$ and $\chi: [0,1 ] \rightarrow \R$ such that
\[ \gamma_0 = -1, \quad \gamma_1=1, \]
\[ \gamma_t' =\frac{1}{4}\theta(1+\gamma_t, 1- \gamma_t)  \chi_t, \quad \forall t \in (0,1), \]
and
\[ \int_{0}^{1}\chi_t^2 \theta(1+\gamma_t, 1- \gamma_t) dt  < \infty. \]
We notice that (6) in Assumption \ref{assump} is crucial for this last inequality to be true. For every $t \in [0,1]$ let

\[  \rho_{n,t}(\x) := \begin{cases}  \frac{n(1-\gamma_t)}{2 N_i } , & \x \in X^i \\
\frac{n(1+\gamma_t)}{2 N_j}, & \x \in X^j, \\ 0 , & \x \in (X^i\cup X^j)^c.    \end{cases} \]
A straightforward computation shows that
\[ \frac{d }{dt} \rho_{n,t}(\x)  + \divergence_n(V_{n,t})(\x) =0 , \quad \forall \x \in X_n, \]
where
\[ V_{n,t}(\x,\y) :=  \begin{cases} \frac{n^2 \veps_n^{d+1}}{16 N_i N_j} \theta(1+\gamma_t, 1- \gamma_t)  \chi_t  & (\x,\y) \in X^i \times X^j \\ -V_{n,t}(\y,\x) & (\x,\y) \in X^j \times X^i   \\ 0 & \text{otherwise}. \end{cases}  \]
The curve $t \in [0,1] \mapsto \rightarrow (\rho_{n,t}, V_{n,t})$ is thus a solution to the discrete continuity equation connecting the measures $\mu_n^i$ and $\mu_n^j$, and so
\begin{align*}
\begin{split}
 W_n^2(\mu_n^i, \mu_n^j) \leq \int_{0}^{1} \mathcal{A}_n(\rho_t, V_t) dt & = \frac{2}{n^2\veps_n^d} \sum_{\x \in X^i}\sum_{\y \in X^j }  \frac{n^4 \veps_n^{2(d+1)}}{16^2 N_i^2 N_j^2} \int_{0}^1 \frac{\theta(1+\gamma_t, 1-\gamma_t) ^2}{\theta(\rho_{n,t}(\x) , \rho_{n,t}(\y) )} \chi_t^2 dt  
 \\& \leq C \frac{n \veps_n^d \veps_n^2 \max \{N_i, N_j \}}{N_i N_j}  \int_{0}^1 \theta(1+\gamma_t, 1-\gamma_t)  \chi_t^2 dt 
 \\& \leq C \veps_n^2 \int_{0}^1 \theta(1+\gamma_t, 1-\gamma_t)  \chi_t^2 dt,
\end{split}
\end{align*}
where in the first inequality we have used the homogeneity and monotonicity of $\theta$ to write
\[ \frac{1}{\theta(\rho_{n,t}(\x) , \rho_{n,t}(\y) )} \leq \frac{2\max\{ N_i, N_j \}}{n} \frac{1}{\theta(1+\gamma_t, 1-\gamma_t)}, \] 
and in the last inequality we have used \eqref{EstimateNi}. The desired inequality follows.
\end{proof}
\nc

We are now in a position to establish Proposition \ref{AprioriBound}.

\begin{proof}[Proof of Proposition \eqref{AprioriBound}]
\textbf{Step 1:} We first establish the result in the case where $\mu_n, \tilde{\mu}_n$ are arbitrary point masses $\mu_n := \delta_{\x_i},\quad \tilde{\mu}_n := \delta_{\x_j}$.  We claim that there exists a sequence $x_0, x_1, \dots, x_k$ in $X_n$ satisfying the following properties:
\begin{enumerate}
	\item $x_0=\x_i$, $x_k=\x_j$.
	\item $\lvert x_{l+1} - \x_j \rvert < \lvert  x_{l} - \x_j \rvert$ for every $l$.
	\item $\frac{1}{2}  \lvert  x_l-x_{l+1}  \rvert  \leq \lvert x_l - \x_j \rvert  - \lvert x_{l+1} - \x_j  \rvert$ for every $l$.
	\item $x_{l+1} \in \mathcal{C}_n(x_l, \frac{\x_j - x_l}{\lvert \x_j - x_l \rvert})$ for every $l=0, \dots, k-2$.
	\item $\lvert x_{k-1} - \x_j  \rvert < \frac{\veps_n}{2} $,
\end{enumerate}
where $\mathcal{C}_n$ is as in Lemma \ref{LemmaAuxGraphDistance1}.

To see this, first set $x_0:= \x_i$.  If $\lvert x_0 - \x_j \rvert < \frac{\veps_n}{2} $ then we simply set $x_1= \x_j$ and stop the construction. Otherwise, we know by Lemma \ref{LemmaAuxGraphDistance1} that there exists $x_1 \in  \mathcal{C}_n(x_0, \frac{\x_j-x_0}{\lvert  \x_j -x_0 \rvert}) \cap X_n$. We claim that 
\begin{equation}
\frac{1}{2} \lvert  x_0-x_1  \rvert  \leq \lvert x_0 - \x_j \rvert  - \lvert x_1 - \x_j  \rvert. 
\label{auxpath}
\end{equation}
 Indeed, notice that if $\lvert x_0- x_1 \rvert > 4 \lvert x_1 -\x_j \rvert$ then,
\[  \frac{1}{2} \lvert x_0- x_1 \rvert  < \lvert x_0 - x_1 \rvert -2 \lvert x_1 - \x_j \rvert \leq \lvert x_1 - \x_j \rvert + \lvert \x_j - x_0  \rvert - 2\lvert x_1 - \x_j \rvert  = \lvert x_0 - \x_j \rvert- \lvert x_1 - \x_j \rvert,  \]
and so \eqref{auxpath} holds in this case.  On the other hand, if $\lvert x_0- x_1 \rvert \leq 4 \lvert x_1 -\x_j \rvert$ we may consider the triangle in the figure below and notice that

\begin{figure}[h]
\includegraphics[width=0.5\textwidth, trim={0 2cm 0 1cm} ]{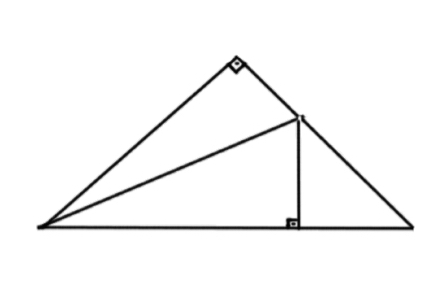}
\put(-70,45){$\theta_2$}
\put(-88,50){$\theta_1$}
\put(-93,65){$\beta$}
\put(-182,13){$\alpha$}
\put(-220,7){$x_0$}
\put(-72,68){$x_1$}
\put(-15,7){$\x_j$}
\label{Figure1}
\end{figure}

\[  \cos(\theta_2) = \frac{\sin(\alpha) \lvert x_0-x_1 \rvert }{\lvert x_1 - \x_j \rvert}  \leq 4 \sin(\alpha) \leq 1/2,  \] 
where the last inequality follows from the fact that $\sin(\alpha) \leq 1/8$ given that $x_1 \in \mathcal{C}_n(x_0, \frac{ \x_j-x_0}{\lvert  \x_j - x_0 \rvert}) $; in particular  $\theta_2 \geq  \pi /3$. It follows that $0 \leq \beta = \pi - \theta_1 -\theta_2 = \frac{\pi}{2} + \alpha - \theta_2 \leq \frac{\pi}{2} + \frac{\pi}{6} - \frac{\pi}{3}  =   \frac{\pi}{3}  $  and thus 
\[ \cos(\beta)  \geq \cos( \pi /3) =1/2 .\]
Therefore,
\[  \frac{1}{2}\lvert x_0 - x_1 \rvert  + \lvert x_1 - \x_j  \rvert \leq \cos(\beta) \lvert x_0 - x_1 \rvert  + \lvert x_1 - \x_j  \rvert     \leq  \lvert x_0 - \x_j \rvert. \]
This establishes \eqref{auxpath}.

We continue constructing the sequence $x_0, x_1 , x_2, \dots$ obtaining $x_{l+1}$ from $x_l$ in the same way we obtained $x_1$ from $x_0$. Given that $\lvert x_{l} - \x_j \rvert  $ decreases at every new step and given that $X_n$ is finite, we conclude that after a finite number of steps our construction will arrive to $\x_j$.

Now for every $l=0, \dots, k$ let $\mu_n^l$ be the uniform distribution on $B(x_l , \frac{\veps_n}{8}) \cap X_n$. We can then use the properties of the path $x_0, x_1, \dots, x_k$ and Lemma \ref{LemmaApriori3} to obtain
\begin{align*}
\begin{split}
W_n(\mu_n^{0}, \mu_n^{k}) & \leq \sum_{l=0}^{k-1} W_n(\mu_n^{l}, \mu_n^{l+1})
\\& \leq  C_\theta \sum_{l=0}^{k-1} \veps_n 
\\& \leq 4C_\theta \sum_{l=0}^{k-2} \lvert x_l  - x_{l+1} \rvert +  C_\theta\veps_n     
\\&  \leq \frac{8C_\theta}{2} \sum_{l=0}^{k-1} \lvert x_l  - x_{l+1} \rvert + C_\theta\veps_n
\\& \leq 8C_{\theta} \left( \frac{1}{2} \sum_{l=0}^{k-2} \lvert x_l  - x_{l+1} \rvert + \frac{1}{2} \lvert x_{k-1} - x_k \rvert\right) + C_\theta\veps_n
\\& \leq 8C_{\theta} \left(\sum_{l=0}^{k-2} ( \lvert x_l - \x_j  \rvert - \lvert x_{l+1} - \x_j \rvert  )   +    \lvert x_{k-1} - x_k \rvert\right) + C_\theta\veps_n  ,
\\&= 8C_\theta\lvert x_0- \x_j \rvert + C_\theta\veps_n
\end{split}
\end{align*}
where the last equality follows from the fact that the sum in the second to last line is telescoping. Finally, the triangle inequality, the above estimate,  Lemma \ref{NonLocaleffect}, and \eqref{EstimateNi} imply that
\begin{align*}
 W_n(\delta_{\x_i}, \delta_{\x_j}) & \leq W_n(\delta_{x_0}, \mu_n^0) + W_n(\mu_n^0, \mu_n^k) + W_n(\mu_n^k , \delta_{x_k}) 
 \\ &  \leq C_\theta \lvert x_0- \x_j \rvert  + C_\theta \veps_n \diam_\theta(\mathcal{P}( \mathcal{K}_N ) )
 \\& =  C_\theta W(\delta_{\x_i}, \delta_{\x_j})  + C_\theta \veps_n \diam_\theta(\mathcal{P}( \mathcal{K}_N ) ),
\end{align*}
where we recall $N= n \veps_n^d$.

\textbf{Step 2:} Following Proposition 2.14 in \cite{Erbar2012} we know that for general $\mu_n, \tilde{\mu}_n \in \mathcal{P}(X_n)$, we have the one sided relation
\[ W^2_n( \mu_n , \tilde{\mu}_n) \leq \min_{\pi \in \Gamma( \mu_n, \tilde{\mu}_n ) } \int \int W_n^2(\delta_{\x_i}, \delta_{\x_j} ) d\pi(\x_i, \x_j).  \]
In contrast, for the usual Wasserstein distance we have
\[ W^2( \mu_n , \tilde{\mu}_n) = \min_{\pi \in \Gamma( \mu_n, \tilde{\mu}_n ) } \int \int  \lvert \x_i - \x_j \rvert^2 d \pi(\x_i, \x_j).   \]
We can combine the above relations together with Step 1 to conclude that
\[  W_n(\mu_n, \tilde{\mu}_n)   \leq C  W(\mu_n, \tilde{\mu}_n) + C\veps_n \diamP, \]
for some constant $C>0$ only depending on $d$, and $\theta$. This establishes the desired result. 
\end{proof}

We finish this section with some bounds for the distance $W$ (or $W_n$) between measures, in terms of the uniform difference of the corresponding densities.


\begin{lemma} 
Let $\rho_0$ and $\rho_1$ be two densities with respect to $\nu$, satisfying that
\[ 0< a \leq \rho, \tilde\rho \leq M.\]
Then, 
\[ W(\rho dx , \tilde\rho dx) \leq C \left(\frac{1}{a} + M \right) \lVert\rho - \tilde\rho \rVert_{L^\infty(\T^d)}. \]
\label{W2L8}
\end{lemma}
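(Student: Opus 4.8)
The plan is to construct an explicit solution of the continuity equation in flux form joining $\rho\,dx$ and $\tilde\rho\,dx$ and to estimate its action, in the spirit of the Benamou--Brenier computation recalled in Subsection~\ref{Sec:ContWass}; this is essentially the mechanism behind the estimate quoted from \cite{W8L8}. The first step is to take the \emph{linear interpolation} $\rho_t := (1-t)\rho + t\tilde\rho$, $t\in[0,1]$. Being a convex combination of $\rho$ and $\tilde\rho$, each $\rho_t$ is again a probability density with respect to $\nu$ and satisfies $0<a\le\rho_t\le M$, so in particular the flux formulation of the continuity equation is admissible along this curve. Since $\int_{\T^d}(\rho-\tilde\rho)\,dx=0$, the Poisson problem $\Delta\Phi=\rho-\tilde\rho$ has a zero-mean solution $\Phi$ on $\T^d$ (smooth enough by elliptic regularity, since $\rho-\tilde\rho\in L^\infty$), and one sets the flux field to be the time-independent vector field $\vec V_t\equiv\vec V:=\nabla\Phi$. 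Then $\frac{d}{dt}\rho_t+\divergence(\vec V)=(\tilde\rho-\rho)+\Delta\Phi=0$ in the distributional sense, so $t\mapsto(\rho_t,\vec V)$ is a solution of the continuity equation in flux form with $\rho_0=\rho$ and $\rho_1=\tilde\rho$.

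Next I would bound the total action. Using $\rho_t\ge a$,
\[
(W(\rho\,dx,\tilde\rho\,dx))^2\le\int_0^1\mathcal{A}(\rho_t,\vec V)\,dt=\int_0^1\int_{\T^d}\frac{\lvert\nabla\Phi\rvert^2}{\rho_t}\,dx\,dt\le\frac{1}{a}\int_{\T^d}\lvert\nabla\Phi\rvert^2\,dx .
\]
To control $\int_{\T^d}\lvert\nabla\Phi\rvert^2$, I integrate by parts and use Cauchy--Schwarz together with the Poincar\'e inequality on $\T^d$ (available because $\Phi$ has zero mean): $\int_{\T^d}\lvert\nabla\Phi\rvert^2=\int_{\T^d}\Phi(\tilde\rho-\rho)\le\lVert\Phi\rVert_{L^2(\T^d)}\lVert\tilde\rho-\rho\rVert_{L^2(\T^d)}\le C\lVert\nabla\Phi\rVert_{L^2(\T^d)}\lVert\tilde\rho-\rho\rVert_{L^2(\T^d)}$, whence $\lVert\nabla\Phi\rVert_{L^2(\T^d)}\le C\lVert\rho-\tilde\rho\rVert_{L^2(\T^d)}\le C\lVert\rho-\tilde\rho\rVert_{L^\infty(\T^d)}$, the last step using $\vol(\T^d)=1$. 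Plugging this back gives $W(\rho\,dx,\tilde\rho\,dx)\le\frac{C}{\sqrt a}\lVert\rho-\tilde\rho\rVert_{L^\infty(\T^d)}$.

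Finally, since $\int_{\T^d}\rho\,dx=\vol(\T^d)=1$ one has $a\le1\le M$, so $\frac{1}{\sqrt a}\le\frac{1}{a}\le\frac{1}{a}+M$, and the asserted bound follows. I do not expect a serious obstacle here: the only points needing a bit of care are verifying that the flux form of the continuity equation is legitimate for the chosen pair $(\rho_t,\vec V)$ (which is exactly what the uniform lower bound $\rho_t\ge a>0$ and the regularity of $\Phi$ provide) and recording the Poincar\'e constant of $\T^d$; alternatively, one may simply quote the corresponding density-to-$W$ estimate from \cite{W8L8} directly, as the lemma statement suggests.
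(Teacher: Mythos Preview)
Your proof is correct. The paper's own proof simply imports the estimate from \cite{W8L8} (stated there on $[0,1]^d$) via the observation that $W_{\T^d}\le W_{[0,1]^d}$, whereas you reproduce the underlying Benamou--Brenier/Poisson argument directly on the torus; this is essentially the same mechanism, and in fact the paper uses exactly your construction (linear interpolation plus $\vec V=\nabla\Delta^{-1}(\rho-\tilde\rho)$ and the $L^2$ elliptic estimate) later in the proof of Proposition~\ref{a}. One incidental bonus of your route is that it gives the sharper bound $W\le C a^{-1/2}\lVert\rho-\tilde\rho\rVert_{L^\infty}$, which you then relax to the stated $C(1/a+M)$ form; the extra $M$ in the paper's statement is an artifact of quoting the more general result from \cite{W8L8} rather than a necessity here.
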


\begin{proof}
The result follows immediately from the proof of Theorem 1.2 in \cite{W8L8} by noticing that any two measures in $\mathcal{P}(\T^d)$ can be seen as measures in $\mathcal{P}([0,1]^d)$ and that their Wasserstein distance in $\mathcal{P}(\T^d)$ is smaller than their distance in $\mathcal{P}([0,1]^d)$.  
\end{proof}

\begin{lemma} Let $\rho_n$ and $\tilde{\rho}_n $ be two discrete densities and assume that
	\[0< a \leq \rho_n(\x_i), \tilde{\rho}_n(\x_i) \leq M , \quad \forall i=1, \dots, n. \]
 Then,
	\[ W_n(\rho_n, \tilde{\rho}_n) \leq C\left(\frac{1}{a} + M \right) \lVert\rho_n - \tilde\rho_n \rVert_{L^\infty(\nu_n)} + C  \veps_n \diamP  . \]
	\label{DiscreteW2L8}
\end{lemma}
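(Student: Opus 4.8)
The plan is to reduce the bound to the continuum estimate of Lemma~\ref{W2L8}, transferring between discrete and continuous densities through the $\infty$-optimal transport map $T_n$, and then use Proposition~\ref{AprioriBound} to return to $W_n$. Throughout I identify the discrete density $\rho_n$ with the measure $\mu_n:=\sum_i\rho_n(\x_i)\nu_n(\x_i)\delta_{\x_i}\in\mathcal{P}(X_n)\subset\mathcal{P}(\T^d)$, and similarly $\tilde\rho_n$ with $\tilde\mu_n$.

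First I would invoke Proposition~\ref{AprioriBound} to get
\[ W_n(\rho_n,\tilde\rho_n)\le C\,W(\mu_n,\tilde\mu_n)+C\veps_n\diamP , \]
so that it only remains to control $W(\mu_n,\tilde\mu_n)$ by $C(1/a+M)\lVert\rho_n-\tilde\rho_n\rVert_{L^\infty(\nu_n)}$ up to lower-order errors. For this, recall $\nu_n=T_{n\sharp}\nu$ and set $g:=\rho_n\circ T_n$ and $\tilde g:=\tilde\rho_n\circ T_n$ on $\T^d$. These are densities with respect to $\nu$ (indeed $\int g\,d\nu=\int\rho_n\,d\nu_n=1$), they satisfy $a\le g,\tilde g\le M$ and $\lVert g-\tilde g\rVert_{L^\infty(\T^d)}=\lVert\rho_n-\tilde\rho_n\rVert_{L^\infty(\nu_n)}$ (since $T_n$ hits each atom of $\nu_n$ on a set of positive $\nu$-measure), and one checks directly that $T_{n\sharp}(g\,\nu)=\mu_n$ and $T_{n\sharp}(\tilde g\,\nu)=\tilde\mu_n$.

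Because $T_n$ displaces every point by at most $\delta_n$, for any $\alpha\ll\nu$ the coupling $(\id,T_n)_\sharp\alpha$ gives $W(\alpha,T_{n\sharp}\alpha)\le\delta_n$; hence, by the triangle inequality, $W(\mu_n,\tilde\mu_n)\le W(g\,\nu,\tilde g\,\nu)+2\delta_n$. Applying Lemma~\ref{W2L8} to $g$ and $\tilde g$ then yields $W(g\,\nu,\tilde g\,\nu)\le C(1/a+M)\lVert\rho_n-\tilde\rho_n\rVert_{L^\infty(\nu_n)}$, and combining the three displays gives
\[ W_n(\rho_n,\tilde\rho_n)\le C\Big(\tfrac1a+M\Big)\lVert\rho_n-\tilde\rho_n\rVert_{L^\infty(\nu_n)}+C\delta_n+C\veps_n\diamP . \]
To finish I would absorb $C\delta_n$ into $C\veps_n\diamP$: since $\delta_n/\veps_n\to0$ by \eqref{Scalingsveps}, this needs only that $\diamP$ is bounded below by a universal positive constant, which follows from testing the diameter against $W_N(\delta_{\x_1},\1)$ and using the elementary inequality $\mathcal{A}_N(\rho_s,V_s)\ge\tfrac{1}{4N}\lVert\tfrac{d}{ds}\rho_s\rVert_{L^2(\gamma)}^2$ (valid because $\lVert\divergence_N\rVert\le 2$ and $\theta(r,s)\le\max\{r,s\}\le N$ on $\mathcal{P}(\mathcal{K}_N)$), which gives $W_N(\delta_{\x_1},\1)^2\ge\tfrac{N-1}{4N}\ge\tfrac18$ for all $N\ge2$; thus $C\delta_n\le C'\veps_n\diamP$ for $n$ large, which suffices for the uses made of the lemma.

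The only steps that require genuine care are the identification $T_{n\sharp}(g\,\nu)=\mu_n$ and the accompanying bookkeeping that turns the $L^\infty(\nu_n)$-norm of the discrete density into the $L^\infty(\T^d)$-norm of $g$; everything else is the triangle inequality together with Proposition~\ref{AprioriBound} and Lemma~\ref{W2L8}, and I do not expect any serious obstacle.
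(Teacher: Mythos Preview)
Your proposal is correct and matches the paper's proof essentially line for line: your lift $g=\rho_n\circ T_n$ is exactly the piecewise-constant density $\rho(x):=\rho_n(\x_i)$ on $U_i=T_n^{-1}(\x_i)$ that the paper builds, and the rest is the same triangle-inequality sandwich through Proposition~\ref{AprioriBound} and Lemma~\ref{W2L8}. The only cosmetic difference is the handling of the residual $2\delta_n$: the paper absorbs it with the one-line remark ``$\delta_n\le\veps_n$'' (tacitly using that $\diamP$ is bounded below), whereas you spell out a lower bound for $\diamP$; either way this is a harmless bookkeeping point.
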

\nc
\begin{proof}
Let $T_n$ be the optimal, in the $\infty$-OT sense, transportation map between $\nu$ and $\nu_n$. For every $i=1, \dots, n$ we let $U_i := T_n^{-1}(\x_i)$. Notice that all the sets $U_i$ have $\nu$ measure equal to $1/n$ and that the distance between $\x_i$ and any point in $U_i$ is smaller than $\delta_n$ (the $\infty$-OT distance between $\nu$ and $\nu_n$).

Let $\rho_n, \tilde{\rho}_n$ be two discrete densities bounded away from zero. We consider the measures $\mu, \tilde \mu$ in $\mathcal{P}(\T^d)$ with densities
 \[ \rho(x) := \rho_n(\x_i), \quad \forall x \in U_i, \quad\forall i=1, \dots, n, \]
 \[ \tilde{\rho}(x) := \tilde{\rho}_n(\x_i), \quad \forall x \in U_i,\quad \forall i=1, \dots, n.\]
 We denote by $\mu_n$ and $\tilde{\mu}_n$ the measures with discrete densities $\rho_n$ and $\tilde{\rho}_n$. Note that
 \[  W(\mu_n, \mu) \leq \delta_n  \quad \text{and } \quad  W( \tilde{\mu}_n , \tilde{\mu}) \leq \delta_n,\]
since one can transport one of the measures into the other by redistributing the mass within each of the cells $U_i$. On the other hand, from Lemma \ref{W2L8} we know that  
	\[   W(\mu, \tilde{\mu}) \leq   C\left(\frac{1}{a} + M \right) \lVert\rho - \tilde\rho \rVert_{L^\infty(\T^d)}= C\left(\frac{1}{a} + M \right) \lVert \rho_n- \tilde{\rho}_n  \rVert_{L^\infty(X_n)}.\]
Hence, from Proposition \eqref{AprioriBound} we conclude that
\begin{align*}
\begin{split}
W_n(\mu_n, \tilde{\mu}_n) & \leq  C W(\mu_n, \tilde{\mu}_n) +C\veps_n \diamP  
\\& \leq C W(\mu_n, \mu) + CW(\mu, \tilde{\mu}) + C W(\tilde{\mu}, \tilde{\mu}_n) + C \veps_n \diamP 
\\& \leq C\left(\frac{1}{a} + M \right) \lVert \rho_n- \tilde{\rho}_n  \rVert_{L^\infty(X_n)} + C\veps_n \diamP,
\end{split}
\end{align*}
where we have used the fact that $\delta _n \leq \veps_n$.
\end{proof}

%
%

\subsection{Smoothening step in continuous setting}
\label{Sec:Smooth}

The purpose of this section is to present some basic results that will enable us to regularize solutions to the continuity equation without increasing their total action. We do this via a standard mollification argument.

Let $\{ \bH_s \}_{s >0}$ be the heat semigroup on $\T^d$. For an arbitrary $\mu \in \mathcal{P}(\T^d)$, the convolution $\bH_s ( \mu)$ is the measure whose density $\rho_s$ with respect to the Lebesgue measure is given by
\[ \rho_s(x) :=  \int_{\T^d} J_s( y-x ) d \mu(y) = J_s \ast \mu(x), \]
where in the above and in the remainder, $y-x$ has to be interpreted modulo $\Z^d$, and $J_s$ denotes the heat kernel

\begin{equation} 
J_s(u):=   \frac{1}{(4 \pi s)^{d/2}} \exp\left(  -\frac{\lvert u\rvert^2}{4s}  \right).
\label{GaussianJ} 
\end{equation}

Let $t \in [0,1] \mapsto (\mu_t, \vec{V}_t)$ be a solution to the continuity equation and let $s>0$. Consider $\mu_t^s:= \bH_s(\mu_t)$ and let $\vec{V}_t^s$ be the density of the vector valued measure $\bH_s(\vec{V}_t \mu_t) := J_s \ast (\vec{V}_t \mu_t)$. Then, the curve $t \in [0,1] \mapsto (\mu_t^s, \vec{V}_t^s)$ satisfies the continuity equation in flux form:
\[  \frac{d}{dt} \mu_t^s + \divergence(\vec{V}_t^s) =0.  \]
The proof of this statement can be found in \cite{GigliBook}. For the convenience of the reader, we point out that the proof relies on the fact that $\bH_s$ is a convolution operator and thus, for arbitrary  $\phi \in C^\infty(\T^d)$,
\begin{align}
\begin{split}
\int_{\T^d} \vec{V}_t^h(x) \cdot \nabla \phi(x) dx = & \int_{\T^d}\int_{\T^d}J_s(z-x) \vec{V}_t(z) \cdot \nabla \phi (x) d\mu_t(z) dx
\\ &= \int_{\T^d}\left( \int_{\T^d}  J_s(z-x) \nabla \phi(x) dx    \right) \cdot \vec{V}_t(z) d\mu_t(z)
\\& =\int_{\T^d} (J_s \ast \nabla \phi)(z) \cdot \vec{V}_t(z)d\mu_t(z)
\\&= \int_{\T^d} \vec{V}_t(z) \cdot \nabla ( J_s \ast \phi)(z) d\mu_t(z),
\end{split}
\label{ConvoluVectors}
\end{align}
where the last equality follows from the fact that $\nabla_z J_s(z-x) = - \nabla_x J_s(z-x) $ which guarantees that $ (J_s \ast \nabla \phi)(z) = \nabla (J_s \ast \phi) (z)$.  On the other hand, at the level of the measures, we have
\[ \frac{d}{dt} \int_{\T^d} \phi(x) d\mu_t^s(x) = \frac{d}{dt}\int_{\T^d} (J_s\ast\phi)(x) d \mu_t(x), \]
which together with \eqref{ConvoluVectors} implies that $t \in [0,1] \mapsto (\mu_t^s, \vec{V}_t^s)$ is indeed a solution to the continuity equation in flux form.

We have seen that by convolving a solution to the continuity equation with the heat kernel we obtain a more regular solution (i.e. smoother densities and vector fields). In the next two propositions we summarize some facts that we need in the remainder.

\begin{proposition}(Heat flow on $\T^d$). The following assertions hold for every $s>0$: 
	\begin{enumerate}
		\item There exists constants $c_1(s)>0$ and $C_2(s) < \infty$ such that for every $\mu \in \mathcal{P}(\T^d)$ the density $\rho_s$
		of $\bH_s(\mu)$ satisfies
		\[ \rho_s \geq c_1(s) \quad \text{ and } \Lip(\rho_s)\leq C_2(s). \]
		
		\item For every $\mu \in \mathcal{P}(\T^d)$ we have
		\[ W(\mu, \bH_s(\mu)) \leq \sqrt{2ds} .\]  
		\item There exists a constant $C_3(s) < \infty$  such that for any $f \in L^1(\T^d)$ we have
		\[ \lVert \bH_s(f) \rVert_\infty + \Lip(\bH_s(f)) + \lVert  D^2 \bH_s(f) \rVert_{\infty} \leq C_3(s) \lVert f \rVert_{L^1(\T^d)}  \]
		\item Let $t\in [0,1] \mapsto (\mu_t, \vec{V}_t)$ be a geodesic in $\mathcal{P}(\T^d)$ (w.r.t. the Wasserstein distance). Let $\rho_{s,t}$ be the density of $\bH_s(\mu_t)$ and $\vec{V}_{s,t}$ be the density of the vector valued measure $\bH_s(\vec{V}_t \mu_t)$. Then, the curve $t \in [0,1] \mapsto (\rho_{s,t}, \vec{V}_{s,t})$ is a solution to the continuity equation in flux form and we have
		\[ \int_{0}^1 \int_{\T^d} \frac{\lvert\vec{V}_{s,t}(x) \rvert^2}{\rho_{s,t}(x)} dx dt \leq W(\mu_0, \mu_1))^2.  \]
		
	\end{enumerate}
	\label{HeatFlowTorus} 
\end{proposition}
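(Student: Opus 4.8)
The plan is to verify the four assertions one at a time; each reduces to an elementary property of the periodized heat kernel $J_s$, which for fixed $s>0$ is a smooth, strictly positive function on the compact torus $\T^d$, so that $c_1(s):=\min_{\T^d}J_s>0$ and $\lVert J_s\rVert_\infty,\lVert\nabla J_s\rVert_\infty,\lVert D^2 J_s\rVert_\infty$ are finite. Assertions (1) and (3) are then just kernel estimates. For (1), write $\rho_s(x)=\int_{\T^d}J_s(y-x)\,d\mu(y)$: the bound $\rho_s\ge c_1(s)$ follows from $J_s\ge c_1(s)$ and $\mu(\T^d)=1$, and $\lvert\rho_s(x)-\rho_s(x')\rvert\le\lVert\nabla J_s\rVert_\infty\lvert x-x'\rvert$ holds uniformly in $\mu$, giving $C_2(s):=\lVert\nabla J_s\rVert_\infty$. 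For (3), differentiate $\bH_s f(x)=\int_{\T^d}J_s(y-x)f(y)\,dy$ under the integral sign so that derivatives fall on the kernel, and bound each integrand by $\lVert f\rVert_{L^1(\T^d)}$ times the sup-norm of the corresponding derivative of $J_s$; take $C_3(s):=\lVert J_s\rVert_\infty+\lVert\nabla J_s\rVert_\infty+\lVert D^2 J_s\rVert_\infty$.

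For assertion (2) I would argue by coupling. If $X$ has law $\mu$ and $Z$ is an independent $\R^d$-valued Gaussian with covariance $2s\,\id$ --- the normalization for which $J_s$ is the density of $Z$ --- then the law of $X+Z$ reduced modulo $\Z^d$ is exactly $\bH_s(\mu)$, so $\big(X\bmod\Z^d,\ (X+Z)\bmod\Z^d\big)$ is a coupling of $\mu$ and $\bH_s(\mu)$. Since the torus distance between two points is bounded by the Euclidean distance between any of their representatives, $(W(\mu,\bH_s(\mu)))^2\le\mathbb{E}\,\lvert Z\rvert^2=2ds$. (Alternatively one runs the continuity equation along $t\mapsto\bH_{ts}(\mu)$ and integrates its action, but the coupling is quicker.)

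For assertion (4), the fact that $t\mapsto(\rho_{s,t},\vec V_{s,t})$ solves the continuity equation in flux form has already been established via \eqref{ConvoluVectors}, so only the action bound remains, and this is a convexity argument. Fix $t$ and $x$; by (1) we have $\rho_{s,t}(x)>0$, and $d\pi_x(y):=J_s(y-x)\,d\mu_t(y)/\rho_{s,t}(x)$ defines a probability measure on $\T^d$ under which the average of $\vec V_t(y)$ equals $\vec V_{s,t}(x)/\rho_{s,t}(x)$. Jensen's inequality for the convex map $v\mapsto\lvert v\rvert^2$ gives $\lvert\vec V_{s,t}(x)\rvert^2/\rho_{s,t}(x)^2\le\int_{\T^d}\lvert\vec V_t(y)\rvert^2\,d\pi_x(y)$; multiplying by $\rho_{s,t}(x)$, integrating in $x$ with $\int_{\T^d}J_s(y-x)\,dx=1$ and Fubini, and then integrating in $t$ yields
\[ \int_0^1\!\int_{\T^d}\frac{\lvert\vec V_{s,t}(x)\rvert^2}{\rho_{s,t}(x)}\,dx\,dt\ \le\ \int_0^1\!\int_{\T^d}\lvert\vec V_t(y)\rvert^2\,d\mu_t(y)\,dt\ =\ (W(\mu_0,\mu_1))^2, \]
the last equality because $t\mapsto(\mu_t,\vec V_t)$ is a geodesic and hence realizes the Benamou--Brenier minimum.

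The parts involving differentiation under the integral sign and the kernel bounds are routine bookkeeping; the two places that need care are getting the Gaussian normalization right in (2), and the joint-convexity/Jensen step in (4), which is really the heart of the smoothening step and whose discrete counterpart will reappear in Section \ref{Sec:SmoothDisc}.
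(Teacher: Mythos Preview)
Your proof is correct and supplies the standard self-contained arguments; the paper's own proof simply defers to \cite{gigmaas} for the full statement and, for part (2), to Lemma~7.1.10 in \cite{GigliBook} after reducing from $\T^d$ to $[0,1]^d$. Your direct coupling $(X,\,X+Z\bmod\Z^d)$ for (2) is a clean variant of that reference, and your Jensen/joint-convexity step for (4) is exactly the argument underlying the cited works.
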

\begin{proof}
All of the above assertions and their proofs are presented in \cite{gigmaas}. We notice that the second assertion follows from Lemma 7.1.10 in \cite{GigliBook} by noticing that the Wasserstein distance between $\mu$ and $\bH_s \ast \mu$ seen as measures on the torus is smaller than the Wasserstein distance of $\mu$ and $\bH_s \ast \mu$ seen as measures on $[0,1]^d$; the value of the constant on the right hand side of the inequality was obtained using well known identities for a Gaussian. 
\end{proof}

\begin{proposition}
Let $t \in [0,1] \mapsto (\mu_t, \vec{V}_t)$ be a solution to the continuity equation and for $s>0$ let $\vec{V}_t^s$ be the density of the vector valued measure $\bH_s ( \vec{V}_t \mu_t)$. Then, there exists a constant $C_4(s)$ such that 
\[  \int_{0}^1 \left( \lVert \vec{V}_t^s \rVert_\infty   + \Lip(\vec{V}_t^s) + \lVert D^2 \vec{V}_t^s \rVert_\infty  \right) dt \leq  C_4(s) \left( \int_{0}^{1} \int_{\T^d}  \lvert  \vec{V}_t(x) \rvert^2 d \mu_t(x)  dt \right)^{1/2}.  \]
\label{HeatFlowTorus2}	
\end{proposition}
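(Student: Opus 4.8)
The plan is to work pointwise in $t$ and reduce everything to the fact that, for a fixed $s>0$, the heat kernel $J_s$ together with its first and second derivatives are bounded smooth functions on the compact torus $\T^d$; the time integration is then handled by two applications of the Cauchy--Schwarz inequality.

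First I would record the convolution representation: for almost every $t\in[0,1]$ and every $x\in\T^d$,
\[ \vec{V}_t^s(x) = \bigl(J_s\ast(\vec{V}_t\mu_t)\bigr)(x) = \int_{\T^d} J_s(z-x)\,\vec{V}_t(z)\,d\mu_t(z). \]
This makes sense because we may assume the right-hand side of the claimed inequality is finite (otherwise there is nothing to prove), so for a.e.\ $t$ the action density $\int_{\T^d}|\vec{V}_t|^2\,d\mu_t$ is finite and hence, by Cauchy--Schwarz against the probability measure $\mu_t$, so is $\int_{\T^d}|\vec{V}_t|\,d\mu_t$. Since $J_s\in C^\infty(\T^d)$ and $J_s$, $\nabla J_s$, $D^2 J_s$ are all bounded on the compact set $\T^d$, dominated convergence justifies differentiation under the integral sign, yielding $\nabla\vec{V}_t^s = (\nabla J_s)\ast(\vec{V}_t\mu_t)$ and $D^2\vec{V}_t^s = (D^2 J_s)\ast(\vec{V}_t\mu_t)$ componentwise, both continuous in $x$; this is the same type of estimate already used in Proposition~\ref{HeatFlowTorus}(3), now applied to a vector-valued measure rather than an $L^1$ function.

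Taking suprema in $x$ then gives, for a.e.\ $t$ (using that for a $C^1$ function the Lipschitz constant equals the sup-norm of its gradient),
\[ \lVert \vec{V}_t^s\rVert_\infty + \Lip(\vec{V}_t^s) + \lVert D^2\vec{V}_t^s\rVert_\infty \;\leq\; \bigl(\lVert J_s\rVert_\infty + \lVert \nabla J_s\rVert_\infty + \lVert D^2 J_s\rVert_\infty\bigr)\int_{\T^d}|\vec{V}_t(z)|\,d\mu_t(z) \;=:\; C(s)\int_{\T^d}|\vec{V}_t|\,d\mu_t. \]
By Cauchy--Schwarz (Jensen) applied to the probability measure $\mu_t$ we have $\int_{\T^d}|\vec{V}_t|\,d\mu_t \leq \bigl(\int_{\T^d}|\vec{V}_t|^2\,d\mu_t\bigr)^{1/2}$, and integrating over $t\in[0,1]$ and applying Cauchy--Schwarz once more, this time with respect to Lebesgue measure on $[0,1]$,
\[ \int_{0}^{1}\Bigl(\lVert \vec{V}_t^s\rVert_\infty + \Lip(\vec{V}_t^s) + \lVert D^2\vec{V}_t^s\rVert_\infty\Bigr)dt \;\leq\; C(s)\int_{0}^{1}\Bigl(\int_{\T^d}|\vec{V}_t|^2\,d\mu_t\Bigr)^{1/2}dt \;\leq\; C(s)\Bigl(\int_{0}^{1}\int_{\T^d}|\vec{V}_t|^2\,d\mu_t\,dt\Bigr)^{1/2}, \]
which is the assertion with $C_4(s):=C(s)$.

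I do not expect any real analytic obstacle here: once the convolution representation of $\vec{V}_t^s$ and of its derivatives is in place, the bound is just the crude estimate ``convolution of a finite measure with an $L^\infty$ kernel lies in $L^\infty$'' followed by two Cauchy--Schwarz inequalities. The only points deserving a word of care are the justification of differentiation under the integral (immediate from the smoothness and boundedness of $J_s$ on the compact torus) and the joint measurability in $(t,x)$ needed for the time integrals to be meaningful, which follows from the measurability of $t\mapsto(\mu_t,\vec{V}_t)$ that is part of the definition of a solution to the continuity equation.
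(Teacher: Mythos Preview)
Your proposal is correct and follows essentially the same approach as the paper: the paper first invokes the convolution estimate of Proposition~\ref{HeatFlowTorus}(3) to bound the left-hand side by $C_4(s)\int_0^1\int_{\T^d}|\vec{V}_t|\,d\mu_t\,dt$, and then applies Jensen's inequality once (to the probability measure $d\mu_t\,dt$) to pass to the $L^2$ norm, whereas you split this last step into two Cauchy--Schwarz applications. The only difference is the level of detail---you spell out the differentiation under the integral and the measurability considerations that the paper leaves implicit.
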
	
\begin{proof}
As in (3) in Proposition \ref{HeatFlowTorus} (from the definition of convolution) we can obtain that
\begin{equation*}
\int_{0}^1 \left( \lVert \vec{V}_t^s \rVert_\infty   + \Lip(\vec{V}_t^s) + \lVert D^2 \vec{V}_t^s \rVert_\infty  \right) dt \leq  C_4(s) \int_{0}^{1} \int_{\T^d} \lvert \vec{V}_t(x) \rvert d \mu_t(x) dt.
\end{equation*}
A simple application of Jensen's inequality implies the desired result.
\end{proof}

\subsection{Smoothening step in  discrete setting}
\label{Sec:SmoothDisc}

The main results in the previous section stated that starting from an arbitrary solution $t \in [0,1] \mapsto (\mu_t, \vec{V}_t)$ to the continuity equation, we could construct a more regular solution whose total action is smaller than that of the original curve. In this section we present analogue results for the discrete case. Let us start by introducing some notation that we will use in the remainder of this section.

Let $T_n$ be the optimal transportation map, in the $\infty$-OT sense, between $\nu$ and $\nu_n$.  We introduce a map 
\[P_n : \mathcal{P}(X_n) \rightarrow \mathcal{P}(\T^d)\]
that associates to every discrete density $\rho_n \in \mathcal{P}(X_n)$ a density $\rho(x) $ (with respect to $\nu$) satisfying
\[ \rho(x):=\rho_n(\x_i) , \quad x \in U_i,\]
where $U_i = T_n^{-1}(\x_i)$. In particular, the density $\rho$ is constant in each of the sets $U_i$. Notice that this construction was already used in Lemma \ref{DiscreteW2L8}.

\begin{proposition}
Suppose that $ t\in [0,1] \mapsto (\rho_{n,t}, V_{n,t} )$ is a solution to the discrete continuity equation. Then, there exists a constant $c>0$ such that for all $ b \in [ c\delta_n, i_d]$ ( i.e. $b$ smaller than the injectivity radius of $\T^d$, but no smaller than $c\delta_n$) there exists a curve $t \in [0,1] \mapsto (\rho_{n,t}^b, V_{n,t}^b)$ satisfying the following:

\begin{enumerate}
\item (Continuity equation) For every $i=1, \dots,n$,
\begin{equation*}
\frac{d}{dt}\rho_{n,t}^b(\x_i) + \divergence_{n, \tilde{\veps}_n}(V_{n,t}^b)(\x_i) =0 
\end{equation*}
where
\[ \divergence_{n , \tilde{\veps}_n}(V_{n,t}^b)(\x_i) := \frac{1}{n \tilde \veps_n}\sum_{j=1}^n (V_{n,t}^b(\x_i, \x_j)- V_{n,t}^b(\x_j, \x_i) ) w_{\tilde{\veps}_n}(\x_i, \x_j)    \]
and	
\[\tilde{\veps}_n := \veps_n + C_d \delta_n, \]
for a constant $C_d$ only depending on dimension.
\item (Action is almost decreased) The action, 
\begin{equation*}
 \mathcal{A}_{n,\tilde{\veps}_n}(\rho_{n,t}^b, V_{n,t}^b) := \frac{1}{n^2}\sum_{i,j} \frac{(V_{n,t}^b(\x_i, \x_j))^2}{\theta(\rho_{n,t}^b(\x_i), \rho_{n,t}^b(\x_j))} w_{\tilde{\veps}_n}(\x_i, \x_j)   
\end{equation*}
satisfies
\[ \mathcal{A}_{n,\tilde{\veps}_n}(\rho_{n,t}^b, V_{n,t}^b)  \leq (1+ C_d\frac{\delta_n}{\veps_n} ) \mathcal{A}_n(\rho_{n,t}, V_{n,t}) , \quad \forall t\in [0,1]. \]

\item (Regularity of curves) For every $i,j$ we have
\[  \lvert \rho_{n,t}^b(\x_i) - \rho_{n,t}^b (\x_j) \rvert  \leq  C_7(b) \left( \lvert \x_i- \x_j \rvert + 2 C_d \delta_n \right), \] 
for a constant $C_7(b)$ that depends on $b$.
\end{enumerate}

Moreover, if $ M \geq \rho_{n,t} \geq a >0$ for every $t \in [0,1]$, then the same lower and upper bounds hold for $\rho_{n,t}^b$ and in addition  
\[  W(P_n(\rho_{n,0}), P_n(\rho_{n,0}^b) ) \leq C\left(M + \frac{1}{a} \right) \Lip(\rho_{n,0}) ( b +  \delta_n ),   \]
\[ W(P_n(\rho_{n,1}) , P_n(\rho^b_{n,1}) ) \leq C \left(M + \frac{1}{a} \right) \Lip(\rho_{n,1}) ( b +  \delta_n )  ,\]
where $M \geq \rho_{n,0},   \rho_{n,1} \geq a>0$. 

\label{SmoothDiscrete}
\end{proposition}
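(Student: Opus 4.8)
The plan is to construct the smoothed curve $(\rho_{n,t}^b, V_{n,t}^b)$ by pushing the discrete curve to the continuum via $P_n$, applying the heat semigroup $\bH_b$ from Section~\ref{Sec:Smooth}, and then pulling back to the point cloud by sampling the resulting smooth density and flux at the points $\x_i$. Concretely: given $(\rho_{n,t}, V_{n,t})$, form $\mu_t := P_n(\rho_{n,t}) \in \mathcal{P}(\T^d)$, together with a vector-valued measure on $\T^d$ built from $V_{n,t}$ by spreading each edge contribution across the corresponding cells $U_i \times U_j$; smooth both with the heat kernel $J_b$ to obtain a density $\rho_{b,t}$ with $\rho_{b,t}\ge c_1(b)$ and $\Lip(\rho_{b,t}) \le C_2(b)$ (Proposition~\ref{HeatFlowTorus}(1),(3)) and a smooth flux $\vec{V}_{b,t}$; and finally set $\rho_{n,t}^b(\x_i) := \rho_{b,t}(\x_i)/Z_t$ for a normalizing constant $Z_t$ close to $1$, and define $V_{n,t}^b(\x_i,\x_j)$ by integrating $\vec{V}_{b,t}$ along the segment from $\x_i$ to $\x_j$ (a discrete gradient of a potential, mimicking the flux).

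\textbf{Step 1 (continuity equation).} I would verify that the sampled pair solves the discrete continuity equation with the enlarged radius $\tilde\veps_n = \veps_n + C_d\delta_n$: the enlargement is forced because two points $\x_i,\x_j$ with $|\x_i-\x_j| \le \veps_n$ may have their cells $U_i, U_j$ only guaranteed to lie within $\veps_n + 2\delta_n$ of each other, and conversely the smoothing spreads mass over scale $\delta_n$; choosing $C_d$ to dominate these shifts ensures every edge used in the continuum-side bookkeeping is present in the $\tilde\veps_n$-graph. The time derivative $\frac{d}{dt}\rho_{n,t}^b(\x_i)$ equals $\frac{d}{dt}\rho_{b,t}(\x_i)$ up to the $Z_t$ normalization, and $\frac{d}{dt}\rho_{b,t} = -\divergence(\vec{V}_{b,t})$ in the continuum; discretizing this divergence with the graph structure on $X_n$ (using that $\nu_n$ approximates $\nu$ to within $\delta_n$) produces $\divergence_{n,\tilde\veps_n}(V_{n,t}^b)$ with an error controlled by $\delta_n/\veps_n$, which I would absorb either into the definition of $V_{n,t}^b$ or show it vanishes by a careful discrete integration-by-parts.

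\textbf{Step 2 (action almost decreased).} This is where the factor $(1 + C_d\delta_n/\veps_n)$ enters. I would compare $\mathcal{A}_{n,\tilde\veps_n}(\rho_{n,t}^b, V_{n,t}^b)$ with the continuum action $\int_{\T^d} |\vec{V}_{b,t}|^2/\rho_{b,t}$, which by Proposition~\ref{HeatFlowTorus}(4) / the Jensen-type convolution estimate is at most the continuum action $\int |\vec{V}_t|^2 d\mu_t$ associated to $(\mu_t, \vec{V}_t)$, which in turn is comparable to $\mathcal{A}_n(\rho_{n,t}, V_{n,t})$ because $P_n$ distorts measures and actions only at scale $\delta_n \ll \veps_n$. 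Turning the Riemann sums over $X_n \times X_n$ into the integrals over $\T^d \times \T^d$ costs a relative error of order $\delta_n/\veps_n$ (the mesh size relative to the kernel width), which is exactly the claimed slack; the monotonicity and positive homogeneity of $\theta$ are used to control $\theta(\rho_{n,t}^b(\x_i), \rho_{n,t}^b(\x_j))$ against $\theta(\rho_{b,t}, \rho_{b,t})$ along cells.

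\textbf{Step 3 (regularity and the $W$-estimates).} The Lipschitz bound in item~(3) is immediate from $\Lip(\rho_{b,t}) \le C_7(b)$ and the fact that sampling at $\x_i, \x_j$ compares points at distance $|\x_i-\x_j|$, with the extra $2C_d\delta_n$ accounting for the discrepancy between the true densities and the cell-averaged ones. For the final two displayed inequalities, when $a \le \rho_{n,t} \le M$ the smoothing preserves these bounds (the heat semigroup is a contraction on $L^\infty$ and on $[a,M]$-valued functions), and $W(P_n(\rho_{n,0}), P_n(\rho_{n,0}^b))$ is bounded by first moving $P_n(\rho_{n,0})$ to $\bH_b(P_n(\rho_{n,0}))$ — using Proposition~\ref{HeatFlowTorus}(2) refined via Lemma~\ref{W2L8} to a bound $C(M + 1/a)\Lip(\rho_{n,0})(b+\delta_n)$ rather than the crude $\sqrt{2db}$, since $\rho_{n,0}$ is already Lipschitz — and then discretizing/re-normalizing, which costs another $O(\delta_n)$.

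\textbf{Main obstacle.} The delicate point is the bookkeeping in Step~2: one must track three separate sources of error — the cell-averaging distortion of $P_n$, the heat-kernel smoothing at scale $b$, and the Riemann-sum-to-integral passage at mesh $\delta_n$ versus kernel width $\veps_n$ — and show that only the last contributes a genuine multiplicative $(1 + C_d\delta_n/\veps_n)$ while the first two are either absorbed or deferred (the $b$-dependent errors surface only in item~(3) and the final $W$-bounds, not in the action comparison). Getting the enlarged radius $\tilde\veps_n$ to be exactly large enough — but not larger — to make the discrete divergence identity hold edge-by-edge, while keeping $\tilde\veps_n/\veps_n = 1 + O(\delta_n/\veps_n)$, is the technical heart of the argument.
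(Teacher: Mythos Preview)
Your proposal takes a genuinely different route from the paper, and the route you chose has a real gap.

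The paper does \emph{not} go through the continuum. It introduces an auxiliary \emph{regular grid} $Y_B\subset\T^d$ with $B\approx n$ points, transports $(\rho_{n,t},V_{n,t})$ from $X_n$ to $Y_B$ via a coupling $\pi_n$ between $\nu_n$ and $\hat\nu_B$ (both at $\infty$-OT cost $O(\delta_n)$), performs a \emph{discrete} convolution on $Y_B$ with a mollifier $G_b$ (which is well-defined because the grid is translation-invariant), and transports back to $X_n$. Each of these three steps is an averaging operation, so the continuity equation is preserved \emph{exactly} at every stage (with $\veps_n$ enlarged by $O(\delta_n)$), and Jensen's inequality for the jointly convex map $(u,w,v)\mapsto v^2/\theta(u,w)$ gives the action bound directly. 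The crucial feature is that the $G_b$-convolution on the grid contributes \emph{no} error to the action estimate --- the only losses come from the two transport steps, and those are of size $(\tilde\veps_n/\veps_n)^{O(1)} = 1 + O(\delta_n/\veps_n)$, independent of $b$.

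Your plan --- push to $\T^d$ via $P_n$, apply $\bH_b$, sample back --- runs into two concrete problems. First, after sampling the smooth pair $(\rho_{b,t},\vec V_{b,t})$ at the $\x_i$ you do not get an exact solution of the discrete continuity equation: the discrepancy between $\divergence(\vec V_{b,t})(\x_i)$ and $\divergence_{n,\tilde\veps_n}(V_{n,t}^b)(\x_i)$ is a genuine nonzero error (of the type handled in Part~1 of the proof of the main theorem), and you cannot ``absorb it into $V_{n,t}^b$'' without destroying the action estimate, nor does it vanish by integration by parts. Second, and more fatally, comparing the discrete action $\mathcal{A}_{n,\tilde\veps_n}$ to the continuum action $\int|\vec V_{b,t}|^2/\rho_{b,t}$ and then back to $\mathcal{A}_n(\rho_{n,t},V_{n,t})$ introduces errors that depend on $b$ (through $\Lip(\rho_{b,t})$, $\|\vec V_{b,t}\|_\infty$, etc.), whereas the proposition asserts a multiplicative bound $(1+C_d\delta_n/\veps_n)$ with constant \emph{independent of $b$}. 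The grid detour is precisely what allows the paper to decouple the $b$-smoothing (pure Jensen, no loss) from the $\delta_n$-discretization (the only source of the stated factor).
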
	
Let us stop for a moment and point out the main technical difficulty to establish this proposition. We recall that the smoothening step in the continuous setting was obtained by a simple mollification of probability measures and vector fields. These operations respected the continuity equation, and because they were actually convolutions, Jensen's inequality guaranteed the desired comparison for the total actions. In the geometric graph setting however, there is no direct analogue to such procedure, partly because there is no obvious way to talk about coordinates of discrete vector fields.  Our idea to go around this issue is as follows. We appropriately match the point cloud $X_n$ with a set of points $Y_B$ aligned on a regular grid on $\T^d$; we then conveniently map $\mathcal{P}(X_n)$ and $\mathfrak{X}(X_n)$ into $\mathcal{P}(Y_B)$ and $\mathfrak{X}(Y_B)$. We exploit the regular grid structure of $Y_B$ to define a convolution for discrete densities and vector fields on $Y_B$. The idea is then to we map our original curve in $\mathcal{P}(\T^d)$ to a curve in $\mathcal{P}(Y_B)$ which is then regularized. After we have completed the regularization in $\mathcal{P}(Y_B)$, we map back to $\mathcal{P}(X_n)$. The main work is in making sure that the inequalities relating total actions are maintained when mapping back and forth between $\mathcal{P}(X_n)$ and $\mathcal{P}(Y_B)$.

%

\begin{proof}[Proof of Proposition \ref{SmoothDiscrete} ]

Let $B$ be the largest natural number below $n$ of the form $B= \kappa^d$ for some natural number $\kappa$. We consider $Y_B$ a set of $B$ points in $\T^d$ forming a regular grid and denote by $\hat{\nu}_B$ the empirical measure associated to the points in $Y_B$. It is straightforward to check that there is a transportation map $\hat{T}_B : \T^d \rightarrow Y_B$ satisfying 
\[\hat{T}_{B \sharp} \nu = \hat{\nu}_B, \quad \lVert \hat{T}_B - Id \rVert_\infty \leq \frac{C_d}{B^{1/d}},  \]
From this, we conclude that
\[ d_\infty(\hat{\nu}_B, \nu)  \leq \frac{C_d}{B^{1/d}} \leq  \frac{C_d}{n^{1/d}} \leq C_d \delta_n,\]
where the last inequality follows from Remark \ref{Remarkinfty}. Now, recall that $d_\infty(\nu, \nu_n) = \delta_n$. It follows from the triangle inequality that
\[ d_\infty(\nu_n , \hat{\nu}_B)  \leq C_d \delta_n. \]
 In particular, there exists a transportation \textit{plan} $\pi_n : X_n \times Y_B \rightarrow [0,1]$ between $\nu_n$ and $\hat{\nu}_B$ satisfying
\[ \lvert \x_i - y_k \rvert \leq C_d \delta_n \text{ for all } \x_i \in X_n \text{ and } y_k \in Y_B  \text{ with } \pi_n(\x_i, y_k)>0.  \]
As a transport plan, we have
\[ \sum_{k=1}^B \pi_n(\x_i, y_k) = \frac{1}{n}, \quad \forall i=1, \dots, n,   \]
and
\[ \sum_{i=1}^n \pi_n(\x_i, y_k) = \frac{1}{B}, \quad \forall k=1, \dots, B.   \]

For an arbitrary discrete density $\rho_n \in \mathcal{P}(X_n)$, we define an associated discrete density $\hat{\rho}_B \in \mathcal{P}(Y_B)$ (the set $\mathcal{P}(Y_B)$ is defined in the same way we defined $\mathcal{P}(X_n)$ using $\hat{\nu}_B$ as reference measure) by
\begin{equation}
	\hat{\rho}_B(y_k):= B \sum_{i=1}^n \pi_n(\x_i, y_k) \rho_n(\x_i), \quad k=1, \dots, B.
\end{equation}
Likewise, for a given discrete vector field $V_n \in \mathfrak{X}(X_n)$, we define an associated vector field $\hat{V}_B \in \mathfrak{X}(Y_B)$ by 
\[ \hat{V}_B(y_k, y_l ) := \begin{cases} 
B^2 \sum_{i=1}^n \sum_{j=1}^n \pi_n(\x_i, y_k) \pi_n(\x_j, y_l)   V_n(\x_i, \x_j) \frac{w_{\veps_n}(\x_i, \x_j)}{ w_{\hat{\veps}_B}(y_k , y_l)}, & \text{ if } w_{\hat{\veps}_B}(y_k , y_l) >0
\\ 0 , & \text{otherwise},  
\end{cases} \]
where in the above 
\[ \hat{\veps}_B:= \veps_n + 2 C_d \delta_n. \]
Notice that from the definition of $\hat{\veps}_B$ we conclude that if $\pi_n(\x_i, y_k)>0$, $\pi_n(\x_j , y_l)>0$ and $w_{\hat{\veps}_B}(y_k, y_l ) = 0$, then $w_{\veps_n}(\x_i, \x_j ) = 0$ as well. Because of this we may write, 
\begin{equation}
\hat{V}_B(y_k, y_l ) = B^2 \sum_{i=1}^n \sum_{j=1}^n \pi_n(\x_i, y_k) \pi_n(\x_j, y_l)   V_n(\x_i, \x_j) \frac{w_{\veps_n}(\x_i, \x_j)}{w_{\hat{\veps}_B}(y_k, y_l )},
\label{auxDiscreteSmooth}
\end{equation}
with the convention that $\frac{0}{0}=0$. under this convention, we can multiply both sides of the above identity by $w_{\hat{\veps}_B}(y_k, y_l)$ and cancel out the $w_{\hat{\veps}_B}(y_k, y_l)$ term appearing in the denominator of the right hand side. It follows that,
\begin{align}
\begin{split}
\divergence_B(\hat{V}_B)(y_k) &:=  \frac{1}{ B  \veps_n} \sum_{l=1}^B \left(\hat{V}_B(y_k, y_l) - \hat{V}_B(y_l , y_k)\right) w_{\hat{\veps}_B}(y_k , y_l) 
 \\ &= \frac{B}{\veps_n} \sum_{l=1}^B \sum_{i=1}^n \sum_{j=1}^n \pi_n(\x_i, y_k) \pi_n(\x_j, y_l)( V_n(\x_i, \x_j) - V_n(\x_j, \x_i) ) w_{\veps_n}(\x_i, \x_j) 
 \\  &= \frac{B}{n \veps_n} \sum_{i=1}^n  \left(\sum_{j=1}^n (V_n(\x_i, \x_j) - V_n(\x_j, \x_i))  w_{\veps_n}(\x_i, \x_j)\right) \pi_n(\x_i, y_k)
 \\ &= \sum_{i=1}^n B \pi_n(\x_i, y_k) \divergence_n(V_n)(\x_i). 
\end{split}
\end{align}
Thus, if $t \in [0,1] \mapsto (\rho_{n,t}, V_{n,t})$ satisfies the discrete continuity equation, we have
\begin{align}
\begin{split}
\frac{d}{dt} \hat{\rho}_{B,t}(y_k)  &= \sum_{i=1}^n B \pi_n(\x_i, y_k) \frac{d}{dt}\rho_{n,t}(\x_i) 
\\& =- \sum_{i=1}^n B \pi_n(\x_i, y_k) \divergence_n(V_{n,t})(\x_i)
\\& =-  \divergence_B(\hat{V}_B )(y_k), \quad \forall k=1, \dots, B.
\end{split}
\end{align}
Furthermore, the convexity of the function $(u,w,v) \mapsto \frac{v^2}{\theta(u,w)} $ (which follows from (5) in Assumption \ref{assump}) and Jensen's inequality implies that
\begin{align}
\begin{split}
\mathcal{A}_B (\hat{\rho}_{B,t}, \hat{V}_{B,t}) := & \frac{1}{B^2} \sum_{k=1}^B \sum_{l=1}^B \frac{(\hat{V}_{B,t}(y_k , y_l))^2 }{ \theta(\hat{\rho}_{B,t}(y_k) , \hat{\rho}_{B,t}(y_l)  )}   w_{\hat{\veps}_B}(y_k , y_l)  
\\& \leq \sum_{k,l} \sum_{i,j}  \pi_n(\x_i,y_k)\pi_n(\x_j,y_l) \frac{(V_{n,t}(\x_i , \x_j))^2 }{ \theta(\rho_{n,t}(\x_i) , \rho_{n,t}(\x_j)  )}   \frac{w_{\veps_n}(\x_i , \x_j)}{w_{\hat{\veps}_B}(y_k , y_l)} w_{\veps_n}(\x_i , \x_j)
\\& \leq \left(\frac{\hat{\veps}_B}{\veps_n}\right)^d \sum_{k,l} \sum_{i,j}  \pi_n(\x_i,y_k)\pi_n(\x_j,y_l) \frac{(V_{n,t}(\x_i , \x_j))^2 }{ \theta(\rho_{n,t}(\x_i) , \rho_{n,t}(\x_j)  )}  w_{\veps_n}(\x_i , \x_j)
\\&= \frac{1}{n^2}\left(\frac{\hat{\veps}_B}{\veps_n}\right)^d  \sum_{i,j}  \frac{(V_{n,t}(\x_i , \x_j))^2 }{ \theta(\rho_{n,t}(\x_i) , \rho_{n,t}(\x_j)  )}  w_{\veps_n}(\x_i , \x_j)
\\&= \left(\frac{\hat{\veps}_B}{\veps_n}\right)^d \mathcal{A}_n(\rho_{n,t}, V_{n,t} )
\\&\leq (1+ C_d\frac{\delta_n}{\veps_n}) \mathcal{A}_n(\rho_{n,t}, V_{n,t} ).
\end{split}
\label{ActionGrid}
\end{align}

We now exploit the regular grid structure of $Y_B$ to define mollified versions of discrete densities in $\mathcal{P}(X_n)$ and mollified versions of discrete vector fields in $\mathfrak{X}(X_n)$. The smoothening operation that we introduce respects the discrete continuity equation, provided we  enlarge $\veps_n$ a bit. Let $G : \R^d \rightarrow [0, \infty)$  be a radial standard mollifier. That is, let $G$ be a radial smooth function with support in $B(0,1)$ which integrates to $1$. In what follows we identify $G$ with its radial profile. For $b< i_d$ (recall $i_d$ is the injectivity radius of $\T^d$) we define
\[ G_b(\lvert x -y \rvert) := \frac{1}{B \gamma_b b^d}G \left(\frac{\lvert x-y \rvert}{b} \right), \]
where $\gamma_b$ is a constant introduced so as to guarantee that
\[ \sum_{k=1}^B G_b(\lvert y_k- y_m \rvert) =1, \quad \forall m=1, \dots, B.\]
Notice that the map $m \mapsto \sum_{k=1}^B G_b(\lvert y_k- y_m \rvert)$ is indeed constant due to the regular grid structure of $Y_B$.  Moreover, we can estimate $\gamma_b$ easily, noticing that 
\begin{align*}
\begin{split}
 \gamma_b &= \frac{1}{B}\sum_{k=1}^B   \frac{1}{b^d} G\left( \frac{\lvert y_k - y_m \rvert}{b}  \right) = \int_{\T^d}\frac{1}{b^d} G\left( \frac{\lvert \hat{T}_B(x) - y_m \rvert}{b}  \right)dx 
 \\&= \int_{\T^d}\frac{1}{b^d} G\left( \frac{\lvert x - y_m \rvert}{b}  \right)dx  + O\left(\frac{1}{B^{1/d} b }\right) = 1 + O\left(\frac{1}{B^{1/d} b }\right).
 \end{split} 
 \end{align*}
In particular, 
\[   \lvert  \gamma_b - 1  \rvert  \leq C\frac{\delta_n}{b},\]
and we can take $c=2C$ in the statement of the proposition to guarantee that  $1/2 \leq \gamma_b \leq 2$; this is all that we need to know about $\gamma_b$ in what follows.
\nc

Given $\rho_n \in \mathcal{P}(X_n)$ let 
\[ \rho_n^b (\x_i):= n \sum_{k=1}^B \sum_{m=1}^B  \pi_n(\x_i, y_k) G_b(\lvert y_m - y_k \rvert) \hat{\rho}_B(y_m), \]
and for a given $V_n \in \mathfrak{X}(X_n)$ let $V_n^b \in \mathfrak{X}(X_n)$ be 
\[ V_n^b(\x_i, \x_j) :=n^2\frac{\tilde{\veps}_n}{\veps_n} \sum_{k=1}^B \sum_{l=1}^B \sum_{m=1}^B  \hat{V}_B(y_m , y_m + (y_l - y_k)) G_b(\lvert y_m - y_k \rvert) \pi_n(\x_i, y_k) \pi_n(\x_j, y_l) \frac{w_{\hat{\veps}_B}(y_m, y_m+(y_l - y_k))}{w_{\tilde{\veps}_n}(\x_i, \x_j) }, \] 
where 
\[\tilde{\veps}_n:= \hat{\veps}_B + 2C_d\delta_n.\]
In the above we follow the same convention introduced when we defined $\hat{V}_B$.

As when we defined the vector field $\hat{V}_B$, we make the same observations regarding how to deal with the above expression in case $w_{\tilde{\veps}_n}(\x_i, \x_j) =0$. Notice also that $y_m+  (y_l - y_k)$ in the above formula indeed belongs to $Y_B$ so that the expression $\hat{V}_B(y_m , y_m + (y_l - y_k))$ makes sense. Moreover, notice that for fixed $m$ and $k$, the map $l \mapsto y_m+  (y_l - y_k)$ is a parameterization of $Y_B$. In particular, we can write
\begin{align*}
\begin{split}
\sum_{j=1}^n V_n^b(\x_i, \x_j) w_{\tilde{\veps}_n}(\x_i, \x_j)  &= \frac{n^2 \tilde{\veps}_n}{B\veps_n}  \sum_{k=1}^B  \sum_{m=1}^B \sum_{l=1}^B \hat{V}_B(y_m , y_l) G_b(\lvert y_m - y_k \rvert) \pi_n(\x_i, y_k)  w_{\hat{\veps}_B}(y_m, y_l).
\end{split} 
\end{align*}
In addition, if we swap the roles of $\x_i$ and $\x_j$ and use the regular grid structure of $Y_B$, we can write after a change of variables
\[ V_n^b(\x_j, \x_i)   = n^2\frac{\tilde{\veps}_n}{\veps_n}\sum_{k=1}^B \sum_{l=1}^B \sum_{m=1}^B \hat{V}_B(y_m + (y_l- y_k) , y_m )G_b(\lvert y_m - y_k   \rvert) \pi_n(\x_i, y_k) \pi_n(\x_j, y_l) \frac{w_{\hat{\veps}_B}(y_m + (y_l- y_k) , y_m )}{w_{\tilde{\veps}_n}(\x_i, \x_j)},\]
and from this obtain
\[ \sum_{j=1}^n V_n^b(\x_j, \x_i) w_{\tilde{\veps}_n}(\x_i, \x_j)  =  \frac{n^2 \tilde{\veps}_n}{B\veps_n} \sum_{k=1}^B  \sum_{m=1}^B \sum_{l=1}^B \hat{V}_B(y_l, y_m) G_b(\lvert y_m - y_k \rvert) \pi_n(\x_i, y_k)  w_{\hat{\veps}_B}(y_m, y_l). \]
In particular, it follows that
\begin{align}
\begin{split}
\divergence_{n, \tilde{\veps}_n}(V_n^b)(\x_i):=& \frac{1}{n \tilde{\veps}_n}\sum_{j=1}^n (V_n^b(\x_i, \x_j)- V_n^b(\x_j, \x_i) ) w_{\tilde{\veps}_n}(\x_i, \x_j)  
\\&= n \sum_{k=1}^B \sum_{m=1}^B \pi_n(\x_i, y_k)G_b(\lvert y_m - y_k \rvert) \divergence_B(\hat{V}_B)(y_m).
\end{split}
\end{align}
We conclude that if  $t \in [0,1] \mapsto (\rho_{n,t}, V_{n,t}) $ is a solution to the discrete continuity equation, then 
\begin{align}
\begin{split}
\frac{d}{dt}\rho_{n,t}^b (\x_i) & =  \sum_{k=1}^B \sum_{m=1}^B n \pi_n(\x_i, y_k) G_b(\lvert y_m - y_k \rvert) \frac{d}{dt}\hat{\rho}_{B,t}(y_m) 
\\ & = - \sum_{k=1}^B \sum_{m=1}^B n \pi_n(\x_i, y_k) G_b(\lvert y_m - y_k \rvert) \divergence_B(\hat{V}_{B,t})(y_m)
\\& = - \divergence_{n , \tilde{\veps}_n}(V_n^b)(\x_i),
\end{split}
\end{align}
proving in this way the first part of the proposition.

\red
\nc

To estimate the action of the curve $t \mapsto (\rho_{n,t}^b , V_{n,t}^b)$ fix $i,j$ and notice that 
\[ \rho_{n,t}^b(\x_i) = \sum_{k=1}^{B} \sum_{l=1}^B \sum_{m=1}^B n^2 \pi_n(\x_i, y_k)\pi_n(\x_j, y_l) G_b(\lvert y_m - y_k  \rvert) \hat{\rho}_B(y_m) \]
and also for every $k=1, \dots, B$,
\[  \rho_{n,t}^b(\x_j) =  \sum_{l=1}^B \sum_{m=1}^B n \pi_n(\x_j, y_l) G_b(\lvert y_m - y_k  \rvert) \hat{\rho}_B(y_m+(y_l-y_k)),  \]
which in particular implies that
\[ \rho_{n,t}^b(\x_j) = \sum_{k=1}^{B} \sum_{l=1}^B \sum_{m=1}^B n^2 \pi_n(\x_i, y_k)\pi_n(\x_j, y_l) G_b(\lvert y_m - y_k  \rvert) \hat{\rho}_B(y_m+(y_l - y_k)). \]
From these identities for $\rho_{n,t}^b(\x_i), \rho_{n,t}^b (\x_j)$, the definition of $V_{n,t}^b(\x_i, \x_j)$, the convexity of the function $(u,v,w)\mapsto \frac{v^2}{\theta(u,w)}$, Jensen's inequality, and after similar computations as those in \eqref{ActionGrid}, it follows that
\begin{align}
\begin{split}
 \mathcal{A}_{n,\tilde{\veps}_n}(\rho_{n,t}^b, V_{n,t}^b) &:= \frac{1}{n^2}\sum_{i,j} \frac{(V_{n,t}^b(\x_i, \x_j))^2}{\theta(\rho_{n,t}^b(\x_i), \rho_{n,t}^b(\x_j))} w_{\tilde{\veps}_n}(\x_i, \x_j)      
\\ & \leq \left(\frac{\tilde{\veps}_n}{\hat{\veps}_B} \right)^d\left(\frac{\tilde{\veps}_n}{\veps_n}\right)^2 \mathcal{A}_B(\hat{\rho}_{B,t},\hat{V}_{B,t} ).
\end{split}
\end{align}
Combining this inequality with \eqref{ActionGrid}, we obtain 
\begin{equation}
\mathcal{A}_{n,\tilde{\veps}_n}(\rho_{n,t}^b, V_{n,t}^b) \leq (1 + C_d \frac{\delta_n}{\veps_n}) \mathcal{A}_n(\rho_{n,t}, V_{n,t}),
\end{equation}
establishing in this way the second part of the proposition.

Regarding the regularity of the densities $\rho_{n,t}^b$ we notice that from the definitions it follows that
\begin{align*}
\begin{split}
\lvert \rho_{n,t}^b(\x_j) - \rho_{n,t}^b(\x_i) \rvert   & \leq \sum_{k=1}^B \sum_{l=1}^B \sum_{m=1}^B n^2 \pi_n(\x_j, y_l) \pi_n(\x_i, y_k) \left \lvert G_b(\lvert y_l - y_m \rvert) - G_b(\lvert y_k - y_m \rvert) \right \rvert \hat{\rho}_{B,t}(y_m)
\\& \leq C_7(b) \left( \lvert \x_i- \x_j \rvert + 2 C_d \delta_n \right),
\end{split}
\end{align*}
where $C_7(b)$ is a constant only depending on $b$.

To show the last part of the proposition let us assume that $\rho_{n,t} \geq a >0$ for every $t \in [0,1]$. Notice that from the definitions of $\hat{\rho}_{B,t}$ and $\rho_{n,t}^b$ it follows that $\rho_{n,t}^b \geq a >0$. On the other hand, for every $i=1, \dots, n$ 
\begin{align*}
\begin{split}
 \lvert   \rho_{n,0}(\x_i) - \rho_{n,0}^b(\x_i) \rvert & =  \left \lvert \sum_{k=1}^B \sum_{m=1}^B \sum_{j=1}^n n B \pi_n(\x_i, y_k) \pi_n(\x_j, y_m) G_b(\lvert y_m - y_k \rvert) ( \rho_{n,0}(\x_i) -  \rho_{n,0}(\x_j) )  \right \rvert
 \\& \leq  \Lip(\rho_{n,0}) \sum_{k=1}^B \sum_{m=1}^B \sum_{j=1}^n n B \pi_n(\x_i, y_k) \pi_n(\x_j, y_m) G_b(\lvert y_m - y_k \rvert) \lvert \x_i -  \x_j\rvert   
\\& \leq  \Lip(\rho_{n,0}) ( b + 2C_d \delta_n   ) 
\end{split}
\end{align*}
where the last inequality follows from the fact that $G_b$ vanishes if its argument is greater than $b$. In other words, we have shown  
\[ \lVert   \rho_{n,0} - \rho_{n,0}^b \rVert_{L^\infty(\nu_n)}  \leq \Lip(\rho_{n,0}) ( b + 2C_d \delta_n ) .\] 
Now, let $\rho_0$ and $\rho_0^b$ be the densities of $P_n(\rho_{n,0})$ and $P_n(\rho_{n,0}^b)$ with respect to the Lebesgue measure. We observe that the lower and upper bounds for $\rho_{n,0}$ and $\rho_{n,0}^b$ are also bounds for $\rho_0$ and $\rho_0^b$. Moreover, $\lVert \rho_0 - \rho_0^b  \rVert_{L^\infty(\T^d)}= \lVert \rho_{n,0} - \rho_{n,0}^b \rVert_{L^\infty(\nu_n)} $. From these facts and Lemma \ref{W2L8} we conclude that 
\[ W(P_n(\rho_{n,0}), P_n(\rho_{n,0}^b) ) \leq  C\left(M_0 + \frac{1}{a} \right) \Lip(\rho_{n,0}) ( b +  \delta_n ). \]
In exactly the same way, we obtain 
\[    W_n(P_n(\rho_{n,1}), P_n(\rho_{n,1}^b) )  \leq   C \left(M_1 + \frac{1}{a} \right) \Lip(\rho_{n,1}) ( b +  \delta_n ).  \]
\nc
\end{proof}

\
The previous proposition allows us to regularize a given curve $t \mapsto (\rho_{n,t}, V_{n,t})$. Next we modify a given curve so as to obtain densities that are bounded away from zero. To achieve this we first need to introduce a discrete version of (global) Poincare inequality.
\begin{lemma}(Poincare inequality in discrete setting).
For all $f_n \in L^2(\nu_n)$ with  $\langle f_n , \mathds{1} \rangle_{L^2(\nu_n)}=0$ we have
\[ \lVert f_n \rVert^2_{L^2(\nu_n)} \leq \frac{1}{\lambda_n} D_n(f_n)  \leq C_{d} D_n(f_n), \]
where $\lambda_n$ is the first non-trivial eigenvalue of the Graph Laplacian $\Delta_n$ and $D_n$ is the discrete \textit{Dirichlet energy} defined by
\[ D_n(f_n):= \langle \nabla_n f_n , \nabla_n f_n\rangle_{\mathfrak{X}(X_n)}. \]
\label{PoincareLemma}
\end{lemma}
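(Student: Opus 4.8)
The two inequalities have very different characters. The first, $\norm{f_n}^2_{L^2(\nu_n)}\le \lambda_n^{-1}D_n(f_n)$, is pure linear algebra: by Proposition \ref{divgrad} the operator $\Delta_n$ is self-adjoint and positive semidefinite on $L^2(\nu_n)$ and $\langle \Delta_n f_n,f_n\rangle_{L^2(\nu_n)}=D_n(f_n)$. For $n$ large the geometric graph on $X_n$ is connected (the hypothesis $\delta_n/\veps_n\to 0$ forces $X_n$ to be a $\delta_n$-net of $\T^d$ with $3\delta_n<\veps_n$, and $\T^d$ is connected), so $\ker\Delta_n$ is spanned by $\mathds{1}$; the Courant--Fischer min-max principle then gives
\[ \lambda_n=\min\left\{ \frac{D_n(f_n)}{\norm{f_n}^2_{L^2(\nu_n)}}\ :\ \langle f_n,\mathds{1}\rangle_{L^2(\nu_n)}=0,\ f_n\neq 0\right\}, \]
which is exactly the first inequality, and shows that $\lambda_n^{-1}$ is the optimal constant in it. Hence for the second inequality it suffices to exhibit a constant $C_d$ \emph{independent of $n$} with $\norm{f_n}^2_{L^2(\nu_n)}\le C_d\,D_n(f_n)$ for every mean-zero $f_n$; by optimality this forces $\lambda_n^{-1}\le C_d$.

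\textbf{Transfer to the torus.} I would move the problem to $\T^d$ using the optimal $\infty$-transport map $T_n$. Put $U_i:=T_n^{-1}(\x_i)$, so $\nu(U_i)=1/n$ and $\diam(U_i\cup\{\x_i\})\le\delta_n$, and define $\bar f:\T^d\to\R$ by $\bar f(x):=f_n(\x_i)$ for $x\in U_i$. Then $\norm{\bar f}_{L^2(\nu)}=\norm{f_n}_{L^2(\nu_n)}$ and $\int_{\T^d}\bar f\, d\nu=\frac1n\sum_i f_n(\x_i)=0$. Fix $\tilde\veps:=\veps_n/2$. If $x\in U_i$ and $y\in U_j$ then $\big||x-y|-|\x_i-\x_j|\big|\le 2\delta_n$, so for $n$ large (so that $\tilde\veps+2\delta_n\le\veps_n$) the implication $\eta(|x-y|/\tilde\veps)\neq 0\Rightarrow|x-y|\le\tilde\veps\Rightarrow|\x_i-\x_j|\le\veps_n\Rightarrow\eta(|\x_i-\x_j|/\veps_n)=1$ holds; integrating over $U_i\times U_j$ gives $\int_{U_i}\int_{U_j}\eta(|x-y|/\tilde\veps)\,dx\,dy\le n^{-2}\eta(|\x_i-\x_j|/\veps_n)$. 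Summing over $i,j$ and recalling the explicit form of $D_n$,
\[ \frac{1}{\tilde\veps^{\,d+2}}\int_{\T^d}\int_{\T^d}(\bar f(x)-\bar f(y))^2\,\eta\!\left(\frac{|x-y|}{\tilde\veps}\right)dx\,dy\ \le\ \frac{\veps_n^{\,d+2}}{\tilde\veps^{\,d+2}}\,D_n(f_n)\ =\ 2^{\,d+2}\,D_n(f_n). \]

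\textbf{Nonlocal Poincar\'e inequality.} The remaining ingredient is a Poincar\'e inequality for the nonlocal Dirichlet form at scale $\tilde\veps$ that is uniform as $\tilde\veps\to 0$: there is $c_d$ so that for all small $\tilde\veps$ and all $g\in L^2(\T^d)$ of zero $\nu$-mean, $\norm{g}^2_{L^2(\nu)}\le c_d\,\tilde\veps^{-d-2}\int\int(g(x)-g(y))^2\eta(|x-y|/\tilde\veps)\,dx\,dy$. I would prove this by coarse-graining: partition $\T^d$ into a regular grid $\{Q\}$ of $\kappa^d\asymp\tilde\veps^{-d}$ cubes of side $\asymp\tilde\veps$, small enough that any two points lying in one cube, or in two adjacent cubes, are within distance $\tilde\veps$. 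Writing $g_Q:=|Q|^{-1}\int_Q g$ and $\bar g$ for the global mean, split $\norm{g}^2\le 2\sum_Q\int_Q(g-g_Q)^2+2\sum_Q|Q|\,|g_Q-\bar g|^2$; bound $\int_Q(g-g_Q)^2\le(2|Q|)^{-1}\int_Q\int_Q(g(x)-g(y))^2$ by Jensen, bound $|g_Q-g_{Q'}|^2\le 2|Q|^{-2}\int_Q\int_{Q'}(g(x)-g(y))^2$ for adjacent cubes, and apply the spectral gap $\asymp\kappa^{-2}\asymp\tilde\veps^2$ of the grid-graph Laplacian on $(\Z/\kappa\Z)^d$ to the cell averages $Q\mapsto g_Q$. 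In every case the double integrals that appear have integrand supported in $\{|x-y|\le\tilde\veps\}$ and are therefore dominated by $\tilde\veps^{d+2}$ times the nonlocal energy; the powers of $\tilde\veps$ cancel and one gets $\norm{g}^2\le C_d\,\tilde\veps^{-d-2}\int\int(g(x)-g(y))^2\eta(|x-y|/\tilde\veps)$. Applying this to $g=\bar f$ and combining with the previous display yields $\norm{f_n}^2_{L^2(\nu_n)}=\norm{\bar f}^2_{L^2(\nu)}\le c_d\,2^{\,d+2}D_n(f_n)=:C_d\,D_n(f_n)$.

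\textbf{Main obstacle.} The only real work is the uniform-in-$\tilde\veps$ nonlocal Poincar\'e inequality above; the rest is soft spectral theory (first inequality) and a routine $\infty$-OT comparison (transfer step). If one prefers, this last step can be bypassed by invoking an existing Poincar\'e inequality for geometric graphs of the type established in the continuum-limit-of-clustering literature, which gives the uniform lower bound on $\lambda_n$ directly. A point to watch is that \emph{all} constants in the transfer and coarse-graining steps must be independent of $n$, which is why the comparison is carried out at the fixed ratio $\tilde\veps/\veps_n=1/2$ rather than at $\tilde\veps=\veps_n-2\delta_n$ (which would also work but introduces an $n$-dependent factor $1+O(\delta_n/\veps_n)$ that one would then have to track).
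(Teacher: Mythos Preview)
Your argument is correct, but it takes a different route from the paper for the second inequality. The paper's proof is essentially a one-line citation: the first inequality is the Rayleigh-quotient characterization of $\lambda_n$ (as you also observe), and for the second the paper simply invokes the spectral convergence result of Burago--Ivanov--Kurylev \cite{BIK}, which says that $\lambda_n$ is $C_d(\veps_n+\delta_n/\veps_n)$-close to the first nontrivial eigenvalue of the continuum Laplacian on $\T^d$; since the latter is a fixed positive number, $\lambda_n$ is bounded below by a dimensional constant for $n$ large. Your approach instead gives a self-contained proof: you transfer $f_n$ to a piecewise-constant function on $\T^d$ via the $\infty$-OT map, compare the discrete Dirichlet energy with a nonlocal Dirichlet form at scale $\tilde\veps=\veps_n/2$, and then establish a uniform-in-$\tilde\veps$ nonlocal Poincar\'e inequality on $\T^d$ by coarse-graining onto a grid and using the explicit spectral gap of the discrete torus $(\Z/\kappa\Z)^d$. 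This is more elementary and avoids the black box of \cite{BIK}, at the cost of being longer; it is essentially the alternative you yourself flag in your last paragraph (``invoking an existing Poincar\'e inequality for geometric graphs''---that is exactly what the paper does). Both approaches are valid; yours has the advantage of making the dependence on $d$ and the kernel $\eta$ explicit and trackable, while the paper's gives the sharper quantitative statement that $\lambda_n$ actually converges to the continuum eigenvalue.
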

\begin{proof}
	
This result is obtained using standard tools from the literature of elliptic PDEs (in this case in the graph setting), noticing that the first non-trivial eigenvalue of the graph Laplacian $\lambda_n$ is $C_{d}(\veps_n + \frac{\delta_n}{\veps_n})$-close to  the first non-trivial eigenvalue of the Laplacian on the torus as it follows from the main result in \cite{BIK} (see also \cite{trillos2018spectral}).   

\end{proof}

\begin{proposition}
	Suppose that $ t \in [0,1]  \mapsto (\rho_{n,t}, V_{n,t} )$ is a solution to the discrete continuity equation and that
	\[\rho_{n,0} \geq m_0 >0, \quad \rho_{n,1} \geq m_1 >0 .\]
	For every $a \in (0,1) $ define
	\[  \rho_{n,t}^a := (1-a)\rho_{n,t} + a  \]
	and
	\[ V_{n,t}^a:= (1-a)V_{n,t}.  \]
	Then, $t \mapsto (\rho_{n,t}^a , V_{n,t}^a)$ solves the discrete continuity equation, 
	\[  \mathcal{A}_{n}(\rho_{n,t}^a, V_{n,t}^a ) \leq (1-a) \mathcal{A}_{n}(\rho_{n,t}, V_{n,t}) , \quad \forall t \in [0,1], \]
	and $\rho_{n,t}^a \geq a$ for every $t$. Moreover, 
	\[ W(P_n(\rho_{n,0}^a) , P_n(\rho_{n,0})) \leq  C_{d} \Lip(\rho_{n,0})\frac{a}{\sqrt{m_0}} , \quad   W(P_n(\rho_{n,1}^a) , P_n(\rho_{n,1})) \leq C_{d} \Lip(\rho_{n,1})\frac{a}{\sqrt{m_1}} .\]
\label{a}
\end{proposition}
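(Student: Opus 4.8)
The plan is to verify each claim directly from the definitions, treating the convex combination as a simple ``linear homotopy'' of the curve toward the uniform measure $\mathds{1}$.

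\textbf{Continuity equation and positivity.} First I would check that $t\mapsto(\rho_{n,t}^a,V_{n,t}^a)$ solves the discrete continuity equation. Since $\divergence_n$ is linear in the vector field and the constant $a$ has vanishing divergence (indeed $\divergence_n$ applied to anything gives a function that integrates to zero against $\nu_n$, and more directly the divergence of the zero vector field producing $a$... one simply notes $\frac{d}{dt}a=0$ and $\divergence_n(V_{n,t}^a)=(1-a)\divergence_n(V_{n,t})$), we get
\[ \frac{d}{dt}\rho_{n,t}^a(\x_i)+\divergence_n(V_{n,t}^a)(\x_i)=(1-a)\left(\frac{d}{dt}\rho_{n,t}(\x_i)+\divergence_n(V_{n,t})(\x_i)\right)=0. \]
Also $\rho_{n,t}^a\in\mathcal P(X_n)$ because it is a convex combination of two elements of $\mathcal P(X_n)$ (here $a$ stands for the uniform density, which is $\equiv 1$ as a discrete density), and $\rho_{n,t}^a\geq a\cdot 1=a$ pointwise since $\rho_{n,t}\geq 0$.

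\textbf{Action bound.} Next I would bound the action. Using positive homogeneity of $\theta$ and monotonicity, $\theta(\rho_{n,t}^a(\x_i),\rho_{n,t}^a(\x_j))=\theta((1-a)\rho_{n,t}(\x_i)+a,(1-a)\rho_{n,t}(\x_j)+a)\geq\theta((1-a)\rho_{n,t}(\x_i),(1-a)\rho_{n,t}(\x_j))=(1-a)\theta(\rho_{n,t}(\x_i),\rho_{n,t}(\x_j))$, where the inequality uses that $\theta$ is non-decreasing in each argument. Since $(V_{n,t}^a)^2=(1-a)^2(V_{n,t})^2$, each summand in $\mathcal A_n(\rho_{n,t}^a,V_{n,t}^a)$ is at most $(1-a)^2/(1-a)=(1-a)$ times the corresponding summand of $\mathcal A_n(\rho_{n,t},V_{n,t})$, giving the claimed inequality termwise (and the zero-over-zero convention is preserved).

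\textbf{Wasserstein estimate.} Finally, for the estimate on $W(P_n(\rho_{n,0}^a),P_n(\rho_{n,0}))$, I note $\rho_{n,0}^a-\rho_{n,0}=a(1-\rho_{n,0})$, so $\|\rho_{n,0}^a-\rho_{n,0}\|_{L^\infty(\nu_n)}\leq a\|1-\rho_{n,0}\|_{L^\infty(\nu_n)}$, and correspondingly the densities $\rho_0^a,\rho_0$ of $P_n(\rho_{n,0}^a),P_n(\rho_{n,0})$ satisfy the same $L^\infty(\T^d)$ bound. One wants to avoid the crude bound via $\|1-\rho_{n,0}\|_\infty$ (which need not be controlled by $\Lip(\rho_{n,0})/\sqrt{m_0}$) and instead use that $\rho_{n,0}^a-\rho_{n,0}=a(1-\rho_{n,0})$ is itself a gradient-controlled perturbation: since both $\rho_0$ and $\rho_0^a$ average to $1$ and are Lipschitz-type with constant controlled by $\Lip(\rho_{n,0})$ (note $P_n$ is piecewise constant on cells of size $\lesssim\delta_n$, so in the relevant estimates one works with the associated $L^\infty$ modulus), I would apply Lemma \ref{W2L8} to $\rho_0$ and $\rho_0^a$ directly, obtaining $W\leq C(\frac1a+M)\|\rho_0-\rho_0^a\|_\infty$ — but this gives the wrong dependence. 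The cleaner route, and the one I expect the authors take, is a Benamou--Brenier construction: interpolate linearly $\rho_{0,\tau}:=(1-\tau)\rho_0+\tau\rho_0^a$ for $\tau\in[0,1]$, which is bounded below by $m_0$ (or by the common lower bound) throughout, with time-derivative $a(1-\rho_0)$; solving $\divergence \vec V_\tau=-a(1-\rho_0)$ and using the Poincaré-type / elliptic estimate to bound $\|\vec V_\tau\|_{L^2}$ by $a\|1-\rho_0\|_{H^{-1}}\lesssim a\,\Lip(\rho_0)$, then dividing by the lower bound $\sqrt{m_0}$ in the action $\int|\vec V_\tau|^2/\rho_{0,\tau}$. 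The main obstacle is precisely getting the $\frac{a}{\sqrt{m_0}}\Lip(\rho_{n,0})$ scaling rather than the naive $a\cdot\frac{1}{m_0}\|1-\rho_{n,0}\|_\infty$: this requires estimating the cost of the homotopy in a way that sees the gradient of $\rho_{n,0}$ (through an $H^{-1}$-type norm of $1-\rho_{n,0}$) and only one power of the lower bound $\sqrt{m_0}$, which comes from the action functional $\int |\vec V|^2/\rho$ evaluated along a linear interpolation where $\rho\geq m_0$. The estimate for $\rho_{n,1}$ is identical with $m_0$ replaced by $m_1$.
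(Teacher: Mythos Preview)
Your proposal is essentially correct and matches the paper's approach closely. For the continuity equation, positivity, and action bound, your argument is the same as the paper's (linearity of $\divergence_n$, and monotonicity plus positive homogeneity of $\theta$).

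For the Wasserstein estimate, you correctly diagnose that Lemma~\ref{W2L8} gives the wrong dependence and that a Benamou--Brenier interpolation is needed. The paper does exactly this: it sets $\rho^s:=\rho_0+sa(1-\rho_0)$ for $s\in[0,1]$ and takes the constant-in-time flux $\vec V_s:=-a\nabla\bigl(\Delta^{-1}(1-\rho_0)\bigr)$, then bounds
\[
W^2\le\int_0^1\int_{\T^d}\frac{|\vec V_s|^2}{\rho^s}\,dx\,ds\le\frac{C_d a^2}{m_0}\int_{\T^d}\bigl|\nabla\Delta^{-1}(1-\rho_0)\bigr|^2\,dx\le\frac{C_d a^2}{m_0}\,\|1-\rho_0\|_{L^2(\nu)}^2,
\]
using $\rho^s\ge m_0$ and the $L^2$ elliptic estimate. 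One point you leave slightly implicit: the final passage from $\|1-\rho_0\|_{L^2}$ to $\Lip(\rho_{n,0})$ is done via the \emph{discrete} Poincar\'e inequality (Lemma~\ref{PoincareLemma}), using $\|1-\rho_0\|_{L^2(\nu)}^2=\|1-\rho_{n,0}\|_{L^2(\nu_n)}^2\le C_d D_n(\rho_{n,0})\le C_d(\Lip\rho_{n,0})^2$. This detour through the graph Dirichlet energy is needed because $\rho_0=P_n(\rho_{n,0})$ is only piecewise constant, so a continuum Poincar\'e inequality does not apply directly.
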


\begin{proof}
The fact that $t  \in [0,1] \mapsto (\rho_{n,t}^a, V_{n,t}^a)$ solves the discrete continuity equation follows directly from the definitions. The relation between the actions of the two curves follows from the definitions, and the monotonicity and homogeneity of $\theta$. To prove the last inequalities let us denote by $\rho_0$ the density of $P_n(\rho_{n,0})$ and by $\rho_{0}^a$ the density of $P_n(\rho_{n,0}^a)$. It follows that,
\[ \rho_{0}^a = (1-a) \rho_{0} +a. \]
We can then define $\rho^s:= \rho_0 +sa(1- \rho_0 ) $ for $s \in [0,1]$ and note that $\rho^0=\rho_0$ and $\rho^1= \rho_0^a$. Let $\vec{V}_s$ be the (constant in time) vector field 
\[ \vec{V}_s :=-a\nabla \left( \Delta^{-1}(1- \rho_{0}) \right), \quad s \in [0,1],\]
where $\Delta^{-1}$ is the pseudoinverse of $\Delta$ (the Laplacian on $\T^d$), which is well defined for $L^2$-functions with average zero. It is straightforward to see that $s \in [0,1] \mapsto (\rho^s , \vec{V}_s) $ satisfies the continuity equation in flux form. Moreover, using standard $L^2$ elliptic estimates we deduce that
\[ (W(P_n(\rho_0), P_n(\rho_0^a)))^2 \leq \int_{0}^1 \mathcal{A}(\rho^s, \vec{V}_s )ds \leq C_d\frac{a^2}{m_0}\int_{\T^d}\lvert \nabla \left( \Delta^{-1}(1- \rho_{0}) \right) \rvert^2dx \leq  C_d\frac{a^2}{m_0} \lVert 1 - \rho_0  \rVert^2_{L^2(\nu)}.  \] 
Noticing that $\lVert 1 - \rho_0  \rVert^2_{L^2(\nu)} = \lVert 1 - \rho_{n,0}  \rVert^2_{L^2(\nu_n)}$, we can use the discrete Poincare inequality (Lemma \ref{PoincareLemma}) to conclude that
\[  (W(P_n(\rho_0), P_n(\rho_0^a)))^2 \leq C_{d} \frac{a^2}{m_0} D_n(\rho_{n,0}) \leq C_{d} \frac{a^2}{m_0} (\Lip(\rho_{n,0}))^2,  \]
where the last inequality follows directly from the definition of $D_n(\rho_{n,0})$. We can bound $W(P_n(\rho_1), P_n(\rho_1^a))$ in exactly the same way.
\end{proof}

\subsection{Auxiliary results}
\label{Sec:aux}

In this section we present some identities that will be used in the proof of Theorem \ref{thm:main}. In these identities we connect local with non-local quantities. 
\begin{lemma}
Let $\vec{V}$ be a $C^1$-vector field defined on $\R^d$. Then, for every $x \in \R^d$
\[ \frac{1}{\veps} \int_{\R^d} \vec{V}(y) \cdot \frac{(y-x)}{\veps} w_\veps(x,y) dy = \alpha_d \int_{0}^\infty g(x,\veps r) r^{d+1}\eta(r) dr ,\]
where 
\[ g(x,s) := \fint_{B(x,s)}\divergence(\vec{V}) dy , \]
and $\alpha_d$ is the volume of the unit ball in $\R^d$.
\label{LemmaAux1}
\end{lemma}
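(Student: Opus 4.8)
The plan is to reduce the left-hand side to a radial integral by switching to polar coordinates and then recognizing the inner spherical average as (essentially) a divergence, via the divergence theorem. First I would fix $x\in\R^d$ and substitute $y = x + \veps z$, so that $dy = \veps^d\,dz$ and $w_\veps(x,y) = \veps^{-d}\eta(|z|)$. The left-hand side then becomes
\[
\frac{1}{\veps}\int_{\R^d} \vec V(x+\veps z)\cdot z\,\eta(|z|)\,dz,
\]
and writing $z = r\omega$ with $r = |z|\in[0,\infty)$ and $\omega\in S^{d-1}$, this equals
\[
\frac{1}{\veps}\int_0^\infty r^{d-1}\eta(r)\left( \int_{S^{d-1}} \vec V(x+\veps r\omega)\cdot \omega\,d\sigma(\omega)\right)dr .
\]

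The key step is to evaluate the spherical integral. The divergence theorem on the ball $B(x,\veps r)$, whose outward unit normal at $x + \veps r\omega$ is exactly $\omega$, gives
\[
\int_{\partial B(x,\veps r)} \vec V\cdot n\,dS = \int_{B(x,\veps r)} \divergence(\vec V)\,dy,
\]
and parametrizing $\partial B(x,\veps r)$ by $\omega\mapsto x+\veps r\omega$ turns the surface measure $dS$ into $(\veps r)^{d-1}\,d\sigma(\omega)$, so
\[
\int_{S^{d-1}} \vec V(x+\veps r\omega)\cdot\omega\,d\sigma(\omega) = \frac{1}{(\veps r)^{d-1}}\int_{B(x,\veps r)}\divergence(\vec V)\,dy = \frac{|B(x,\veps r)|}{(\veps r)^{d-1}}\,g(x,\veps r) = \alpha_d\,\veps r\, g(x,\veps r),
\]
using $|B(x,\veps r)| = \alpha_d(\veps r)^d$ and the definition $g(x,s) = \fint_{B(x,s)}\divergence(\vec V)\,dy$. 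Substituting back,
\[
\frac{1}{\veps}\int_0^\infty r^{d-1}\eta(r)\cdot\alpha_d\,\veps r\,g(x,\veps r)\,dr = \alpha_d\int_0^\infty g(x,\veps r)\,r^{d}\eta(r)\,dr,
\]
which is slightly off from the claimed power $r^{d+1}$; I would recheck the bookkeeping of the substitution — the factor $\frac{1}{\veps}\cdot\frac{(y-x)}{\veps}$ in the statement contributes $\frac{\veps r\omega}{\veps^2} = \frac{r\omega}{\veps}$, not $\frac{r\omega}{1}$, so the combined $r$-power is indeed $r^{d-1}\cdot r\cdot r = r^{d+1}$ after the extra $1/\veps$ cancels against the $\veps$ from $|B(x,\veps r)|/(\veps r)^{d-1} = \alpha_d\veps r$; tracking the $\veps$ powers carefully yields exactly $\alpha_d\int_0^\infty g(x,\veps r)\,r^{d+1}\eta(r)\,dr$.

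The only real obstacle is keeping the powers of $\veps$ and $r$ straight through the change of variables and the divergence-theorem rescaling; everything else is routine, and the $C^1$ hypothesis on $\vec V$ together with the compact support of $\eta$ guarantees all integrals converge and the divergence theorem applies on each ball $B(x,\veps r)$ with $r\in\supp(\eta)$. I would present the argument in the order above: substitution to rescale, polar coordinates, divergence theorem on spheres, reassemble.
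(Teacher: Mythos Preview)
Your approach is exactly the paper's: polar coordinates, divergence theorem on the ball, then reassemble. The only issue is the bookkeeping slip you noticed but mis-diagnosed: after the substitution $y=x+\veps z$, the integrand $\vec V(x+\veps z)\cdot z\,\eta(|z|)$ already has $z=r\omega$, so polar coordinates give $r^{d-1}\cdot r = r^{d}$ (not $r^{d-1}$) in front of $\eta(r)\int_{S^{d-1}}\vec V(x+\veps r\omega)\cdot\omega\,d\sigma(\omega)$. Combining this $r^d$ with the $\alpha_d\veps r$ from the divergence theorem and the prefactor $1/\veps$ yields $\alpha_d r^{d+1}$ directly --- there is no need to revisit the $(y-x)/\veps$ factor, which you handled correctly in the first substitution. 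With that one correction the argument is complete and matches the paper's proof (which does polar coordinates in $y$ first and substitutes $r\mapsto r/\veps$ at the end, but is otherwise identical).
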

\begin{proof}
Using polar coordinates we obtain
\begin{align}
\begin{split}
\frac{1}{\veps} \int_{\R^d} \vec{V}(y) \cdot \frac{(y-x)}{\veps} w_\veps(x,y) dy &= \int_{0}^\infty \left(  \int_{\partial B(0,r)} \vec{V}(x+ \xi) \cdot \frac{\xi}{\veps}  dS(\xi)  \right) \frac{1}{\veps^d} \eta\left( \frac{r}{\veps} \right)  \frac{dr}{\veps} \\
 &= \int_{0}^\infty \left(  \int_{\partial B(0,r)} \vec{V}(x+ \xi) \cdot \frac{\xi}{r}  dS(\xi)  \right) \frac{1}{\veps^d}\frac{r}{\veps} \eta\left( \frac{r}{\veps} \right)  \frac{dr}{\veps} \\
 &= \int_{0}^\infty \left(  \int_{ B(x,r)} \divergence(\vec{V})  dy  \right) \frac{1}{\veps^d}\frac{r}{\veps} \eta\left( \frac{r}{\veps} \right)  \frac{dr}{\veps} \\
 &= \alpha_d \int_{0}^\infty \left(  \fint_{ B(x,r)} \divergence(\vec{V})  dy  \right) \frac{r^{d+1}}{\veps^{d+1}} \eta\left( \frac{r}{\veps} \right)  \frac{dr}{\veps}, \\
 \end{split}
 \end{align}
where the third equality follows from the divergence theorem. The desired identity is obtained from the above expression after changing variables $\hat{r}:= \frac{r}{\veps}$. 
\end{proof}

\begin{remark}
Notice that since $\eta$ is zero for $r \geq 1$, the vector field $\vec{V}$ only has to be defined on an $\veps$-ball around $x$ for the identity in the previous lemma to make sense.
\end{remark}

\begin{lemma}
Let $\vec{u}$ be a vector in $\R^d$. Then, for every $x \in \R^d$
\[  \int_{\R^d} \left(\vec{u}\cdot \frac{y-x}{\veps} \right)  \frac{y-x}{\veps} w_{\veps}(x,y) dy = \alpha_d \sigma_\eta \vec{u}. \]
\label{LemmaAux212}
\end{lemma}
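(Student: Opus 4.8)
The plan is to compute the integral $\int_{\R^d} \left(\vec{u}\cdot \frac{y-x}{\veps}\right) \frac{y-x}{\veps} w_\veps(x,y)\, dy$ directly in polar coordinates, reducing it to an integral over the unit sphere times a radial integral. First I would change variables $\xi = y-x$ and then write $\xi = r\omega$ with $r>0$ and $\omega \in S^{d-1}$, so that $w_\veps(x,y) = \veps^{-d}\eta(r/\veps)$ and $dy = r^{d-1}\,dr\,dS(\omega)$. The integrand becomes $\veps^{-2}(\vec{u}\cdot \xi)\xi \cdot \veps^{-d}\eta(r/\veps) = \veps^{-d-2} r^2 (\vec{u}\cdot\omega)\omega\, \eta(r/\veps)$. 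This factors as a product of the angular integral $\int_{S^{d-1}} (\vec u \cdot \omega)\omega\, dS(\omega)$ and the radial integral $\int_0^\infty r^{d+1}\veps^{-d-2}\eta(r/\veps)\,dr$.

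The angular part is the classical identity $\int_{S^{d-1}} \omega_i\omega_j \, dS(\omega) = \frac{|S^{d-1}|}{d}\,\delta_{ij}$, which holds by symmetry: the off-diagonal entries vanish because the integrand is odd under $\omega_i \mapsto -\omega_i$, and the diagonal entries are all equal by permutation symmetry of the sphere, with common value obtained from $\sum_i \int \omega_i^2 = \int |\omega|^2\, dS = |S^{d-1}|$. Hence $\int_{S^{d-1}} (\vec u\cdot\omega)\omega\, dS(\omega) = \frac{|S^{d-1}|}{d}\vec u$. Using $|S^{d-1}| = d\,\alpha_d$, this angular factor equals $\alpha_d \vec u$.

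For the radial part, I would substitute $\hat r = r/\veps$, giving $\int_0^\infty r^{d+1}\veps^{-d-2}\eta(r/\veps)\,dr = \int_0^\infty (\veps\hat r)^{d+1}\veps^{-d-2}\eta(\hat r)\,\veps\,d\hat r = \int_0^\infty \hat r^{d+1}\eta(\hat r)\,d\hat r = \sigma_\eta$ by the definition \eqref{sigmaeta}. Multiplying the two factors yields exactly $\alpha_d \sigma_\eta \vec u$, as claimed. I do not anticipate any real obstacle here; the only point requiring a little care is keeping track of the powers of $\veps$ through the substitution and invoking the standard spherical second-moment identity, but both are routine. (Alternatively, one can observe that the left-hand side defines a linear map $\vec u \mapsto M\vec u$ with a symmetric matrix $M$ that commutes with all rotations fixing $x$, hence $M$ is a scalar multiple of the identity, and then evaluate the scalar by tracing; this gives the same conclusion with essentially the same computation.)
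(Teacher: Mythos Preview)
Your proposal is correct and follows essentially the same approach as the paper: both pass to polar coordinates, factor the integral into a radial part that yields $\sigma_\eta$ after the substitution $\hat r = r/\veps$, and an angular part $\int_{S^{d-1}}(\vec u\cdot\omega)\,\omega\,dS(\omega)$ computed via the spherical second-moment identity (the paper phrases this as ``by symmetry'' and then determines the constant by tracing, exactly as in your alternative remark). The only cosmetic difference is that the paper integrates over $\partial B(0,r)$ before rescaling rather than over the unit sphere directly, but the computations are identical.
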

\begin{proof}
Using polar coordinates we obtain
\begin{align}
\begin{split}
  \int_{\R^d} \left( \vec{u}\cdot \frac{y-x}{\veps}  \right) \frac{y-x}{\veps} w_{\veps}(x,y) dy & = \frac{1}{\veps^d}\int_{0}^\infty  \left( \int_{\partial B(0, r)} \vec{u}\cdot \frac{\xi}{\veps}  \frac{\xi}{\veps} dS(\xi) \right)  \eta \left( \frac{r}{\veps} \right)dr 
\\& = \left( \int_{0}^\infty \eta \left( \frac{r}{\veps} \right) \frac{r^{d+1}}{\veps^{d+1}} \frac{dr}{\veps} \right) \int_{\partial B(0, 1)} (\vec{u}\cdot \xi)  \xi dS(\xi)
\\&= \sigma_\eta \int_{\partial B(0, 1)} (\vec{u}\cdot \xi)  \xi dS(\xi)
\end{split}
\end{align}
Now, by symmetry it is clear that for every $\vec{u} \in \R^d$,
\[ \int_{\partial B(0, 1)} (\vec{u}\cdot \xi)  \xi dS(\xi) = C \vec{u}, \]
for some constant $C$ to be determined. Let $\{e_1, \dots, e_d \}$ be the canonical basis for $\R^d$. Then,
\[ \int_{\partial B(0, 1)} (e_i\cdot \xi)^2 dS(\xi) = C , \quad \forall i =1, \dots, d. \] 
Hence,
\[   d \alpha_d = \int_{\partial B(0,1)} \sum_{i=1}^d  (e_i\cdot \xi)^2 dS(\xi) = d C ,\]
where $\alpha_d$ is the volume of the unit ball in $\R^d$ (note that $d\alpha_d$ is the surface area of the unit sphere). We conclude that $C= \alpha_d$ and the result now follows. 
\end{proof}

The next is an immediate consequence of the previous result.

\begin{corollary}
Let $\vec{u}\in \R^d$ . Then, 
\[  \int_{\R^d} \left(  \frac{(y-x)\cdot \vec{u}}{\veps}  \right)^2 w_\veps(x,y) dy = \alpha_d \sigma_\eta \lvert \vec{u} \rvert^2 , \quad \forall x \in \R^d,\]
where $\alpha_d$ is the volume of the unit ball in $\R^d$.
\label{LemmaAux3}
\end{corollary}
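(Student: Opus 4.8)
The plan is to obtain the corollary by simply pairing both sides of the identity in Lemma~\ref{LemmaAux212} with the vector $\vec{u}$. Fix $x \in \R^d$ and take the Euclidean inner product of $\vec{u}$ against each side of
\[ \int_{\R^d} \left(\vec{u}\cdot \frac{y-x}{\veps} \right)  \frac{y-x}{\veps} w_{\veps}(x,y) dy = \alpha_d \sigma_\eta \vec{u}. \]
Since the inner product with a fixed vector is linear, it passes inside the integral on the left-hand side, where $\vec{u} \cdot \left[ \left(\vec{u}\cdot \frac{y-x}{\veps}\right) \frac{y-x}{\veps}\right] = \left(\vec{u}\cdot \frac{y-x}{\veps}\right)^2$; on the right-hand side one gets $\vec{u}\cdot(\alpha_d\sigma_\eta\vec{u}) = \alpha_d\sigma_\eta\lvert\vec{u}\rvert^2$. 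This immediately yields the claimed formula.

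As an alternative (self-contained) route, one can redo the polar-coordinate computation directly: writing $y = x + r\xi$ with $\xi \in \partial B(0,1)$ and $r>0$, the integrand becomes $(\vec{u}\cdot\xi)^2 (r/\veps)^2 \eta(r/\veps)$ against the spherical surface measure; the radial integral produces $\sigma_\eta$ after the substitution $\hat r = r/\veps$, while $\int_{\partial B(0,1)}(\vec{u}\cdot\xi)^2\,dS(\xi) = \alpha_d\lvert\vec{u}\rvert^2$ by exactly the symmetry argument already used in the proof of Lemma~\ref{LemmaAux212} (one computes $\int_{\partial B(0,1)}\sum_i (e_i\cdot\xi)^2\,dS(\xi) = d\alpha_d$ and isotropy forces each term to equal $\alpha_d$). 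Either way the two scalar constants combine to $\alpha_d\sigma_\eta$.

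There is essentially no obstacle here: the statement is a one-line consequence of Lemma~\ref{LemmaAux212}, and the only point worth noting is the trivial algebraic fact that contracting the vector-valued identity against $\vec{u}$ converts the outer-product-type integrand into the scalar $\left(\vec{u}\cdot\frac{y-x}{\veps}\right)^2$ and converts $\alpha_d\sigma_\eta\vec{u}$ into $\alpha_d\sigma_\eta\lvert\vec{u}\rvert^2$.
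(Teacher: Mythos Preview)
Your proposal is correct and matches the paper's approach exactly: the paper states this corollary as ``an immediate consequence of the previous result'' with no further proof, and your first argument---contracting both sides of Lemma~\ref{LemmaAux212} against $\vec{u}$---is precisely that immediate consequence. The alternative polar-coordinate computation you offer is also correct and is just a replay of the proof of Lemma~\ref{LemmaAux212} specialized to this scalar case.
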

%
%

\begin{lemma}
Let $V : \R^d \times \R^d \rightarrow \R$ be square integrable with respect to the measure $w_\veps(x,y) dy dx$ and define the vector field $\vec{V}$ by
\[  \vec{V}(x):= \int_{\R^d} V(x,y) w_\veps(x,y)  \frac{(y-x)}{\veps} dy, \quad x \in \R^d.\]
Then, for almost every $x$,
\[  \lvert \vec{V}(x) \rvert ^2 \leq   \alpha_d \sigma_\eta  \int_{\R^d}  ( V(x,y))^2  w_\veps(x,y) dy. \]
\label{LemmaAux2}
\end{lemma}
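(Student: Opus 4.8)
The statement to prove is Lemma~\ref{LemmaAux2}:
\[
\vec V(x) := \int_{\R^d} V(x,y)\, w_\veps(x,y)\, \frac{y-x}{\veps}\, dy
\qquad\Longrightarrow\qquad
\lvert \vec V(x)\rvert^2 \leq \alpha_d \sigma_\eta \int_{\R^d} (V(x,y))^2\, w_\veps(x,y)\, dy .
\]
The plan is to fix $x$ and write $\lvert \vec V(x)\rvert$ by duality, i.e. $\lvert \vec V(x)\rvert = \vec V(x)\cdot \vec u$ for the unit vector $\vec u := \vec V(x)/\lvert\vec V(x)\rvert$ (if $\vec V(x)=0$ there is nothing to prove). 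This gives
\[
\lvert \vec V(x)\rvert = \int_{\R^d} V(x,y)\, \Bigl(\vec u\cdot \frac{y-x}{\veps}\Bigr)\, w_\veps(x,y)\, dy .
\]

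The core step is then a Cauchy--Schwarz inequality applied to this integral with respect to the (nonnegative) measure $w_\veps(x,y)\, dy$, splitting the integrand as $V(x,y)$ times $\vec u\cdot\frac{y-x}{\veps}$:
\[
\lvert \vec V(x)\rvert^2 \leq \Bigl(\int_{\R^d} (V(x,y))^2\, w_\veps(x,y)\, dy\Bigr)\cdot \Bigl(\int_{\R^d} \Bigl(\vec u\cdot \frac{y-x}{\veps}\Bigr)^2 w_\veps(x,y)\, dy\Bigr).
\]
Now the second factor on the right is exactly the quantity computed in Corollary~\ref{LemmaAux3}, which gives $\int_{\R^d}\bigl(\frac{(y-x)\cdot\vec u}{\veps}\bigr)^2 w_\veps(x,y)\, dy = \alpha_d\sigma_\eta\lvert\vec u\rvert^2 = \alpha_d\sigma_\eta$ since $\vec u$ is a unit vector. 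Substituting this in yields precisely the claimed bound.

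There is essentially no obstacle here: the lemma is a short consequence of duality plus Cauchy--Schwarz plus the already-established Corollary~\ref{LemmaAux3}. The only points to be mildly careful about are (i) handling the degenerate case $\vec V(x)=0$ separately, and (ii) noting that the square-integrability hypothesis on $V$ with respect to $w_\veps(x,y)\,dy\,dx$ guarantees that, for (a.e.) $x$, the function $y\mapsto V(x,y)$ is square-integrable with respect to $w_\veps(x,y)\,dy$, so that all the integrals above are finite and Cauchy--Schwarz applies; since $w_\veps$ is compactly supported in $y$ (it vanishes for $\lvert y-x\rvert>\veps$), the factor $\lvert\vec u\cdot\frac{y-x}{\veps}\rvert\leq 1$ on the support, which also makes the first integral control the whole expression without integrability worries.
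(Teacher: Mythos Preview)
Your proposal is correct and follows essentially the same approach as the paper: write $\lvert \vec V(x)\rvert = \vec V(x)\cdot e$ for a unit vector $e$, apply Cauchy--Schwarz with respect to the measure $w_\veps(x,y)\,dy$, and invoke Corollary~\ref{LemmaAux3} to evaluate the second factor as $\alpha_d\sigma_\eta$. The paper handles the degenerate case $\vec V(x)=0$ by simply allowing $e$ to be any unit vector rather than treating it separately, but this is a cosmetic difference.
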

\begin{proof}
For a given $x \in \R^d$, there exists a unit vector $e \in \R^d$ for which 
\[  \lvert  \vec{V}(x)\rvert = \vec{V}(x) \cdot e.\]
Indeed if $\vec{V}(x) \not = 0 $ we may take $e = \frac{\vec{V}(x)}{\lvert  \vec{V}(x)\rvert}$ and if $\vec{V}(x)=0$ we may take any unit vector.  Then, 
\begin{align}
\begin{split}
\lvert  \vec{V}(x) \rvert ^2 & = (\vec{V}(x) \cdot e)^2 \\
&=\left(  \int_{\R^d} V(x,y) w_\veps(x,y) \frac{(y-x) \cdot e}{\veps} dy \right)^2\\
&\leq    \left( \int_{\R^d}\left(  \frac{(y-x) \cdot e}{\veps}\right)^2  w_\veps(x,y) dy \right) \left( \int_{\R^d}(V(x,y))^2w_\veps(x,y)   dy \right)
\label{LemmaAux2Idaux}
\end{split}
\end{align}
where the inequality follows from Cauchy-Schwartz inequality. Combining this with Corollary \ref{LemmaAux3} we obtain the desired result.
\end{proof}

\section{Proof of Theorem \ref{thm:main}}
\label{Sec:Proof}

At the beginning of Subsection \ref{Sec:SmoothDisc} we introduced a map $P_n : \mathcal{P}(X_n) \rightarrow \mathcal{P}(\T^d)$ using the optimal transport map $T_n$.  We now introduce a map  $Q_n : \mathcal{P}(\T^d) \rightarrow \mathcal{P}(X_n)$, $P_n'$s adjoint, which associates to every probability measure $\mu \in \mathcal{P}(\T^d)$ a discrete density $\rho_n$ given by
\[ \rho_n(\x_i):= \frac{\mu(U_i)}{\nu(U_i)} = n \mu(U_i), \quad  i=1, \dots, n.\] 
For $\mu \in \mathcal{P}(\T^d)$ with a density $\rho$ with respect to $\nu$, we may abuse notation slightly and write $Q_n(\rho)$ instead of $Q_n(\mu)$. Notice that in that case we can write
\[  Q_n(\rho)(\x_i)= \fint_{U_i} \rho(x) dx , \quad i=1, \dots, n.  \]

\begin{remark}
From the definitions it is clear that the map $P_n\circ Q_n: \mathcal{P}(X_n)\rightarrow \mathcal{P}(X_n) $ is the identity.
\end{remark}


To prove Theorem \ref{thm:main}, we need to construct a map $F_n : \mathcal{P}(\T^d) \rightarrow  \mathcal{P}(X_n)$ whose distortion is small and is such that every element in $\mathcal{P}(X_n)$ is close, in the $W_n$ sense, to an element in $F_n(\mathcal{P}(\T^d))$. The map $F_n$ that we construct takes the form 
\[ F_n :=  Q_n \circ \bH_s, \] 
for a conveniently chosen value of $s>0$, where we recall $\bH_s$ is the heat flow. We split our proof into three parts that can be summarized as follows. Part 1: lower bound for $W(\mu_0, \mu_1)$, where $\mu_0, \mu_1 \in \mathcal{P}(\T^d)$ are arbitrary measures. Part 2: upper bound for $W(\mu_0, \mu_1)$. In the final part we wrap up the argument. 

In what follows, $C$ will be used to denote a constant that depends on $d$ and $ \theta$ only; the value of $C$ may change from line to line. We will also consider constants that depend on some specific parameters. For example if the parameter is denoted with $h$, we will write $C(h)$ to represent a constant that depends on $h$; the value of $C(h)$ may change from line to line.

\subsection{Part 1}

Let $\mu_0, \mu_1 \in \mathcal{P}(\T^d)$ be arbitrary and let $t \mapsto (\mu_t, \vec{V}_t)$ be a geodesic connecting $\mu_0$ and $\mu_1$, so that in particular 
\[ \int_{0}^1 \int_{\T^d} \lvert \vec{V}_t(x) \rvert^2 d\mu_t(x) dt  = (W(\mu_0, \mu_1))^2. \]
For a fixed value of $s>0$ (to be chosen later on) we define $\rho_t^s:= \bH_s( \mu_t) $ and $\vec{V}_t^s:= \bH_s(\vec{V}_t \mu_t)$. We recall that  Proposition \ref{HeatFlowTorus} implies that $t \mapsto (\rho_t^s, \vec{V}_t^s)$ is a solution to the continuity equation in flux form.

\nc

Let $\rho_{n,t}:=Q_n(\rho_t^s)$. For every $i=1, \dots, n$ we have
\begin{equation}
 \frac{d}{dt} \rho_{n,t} (\x_i) = \fint_{U_i} \frac{d}{dt}\rho_t^s(x) dx = - \fint_{U_i}  \divergence (\vec{V}_t^s)d x , \quad t \in(0,1) .
 \label{lowerboundConteqn1}
\end{equation}
We claim that 
\begin{equation} 
\label{lowerbound}
W_n(Q_n(\bH_s(\mu_0)), Q_n(\bH_s(\mu_1))) \leq W(\mu_0, \mu_1) + C(s) \left( \veps_n + \frac{\delta_n}{\veps_n} \right)^{1/2}, 
\end{equation} 
where $C(s)$ blows up as $s\rightarrow 0$. Our plan to prove \eqref{lowerbound} is the following. First, notice that the curve $t \in [0,1] \mapsto \rho_{n,t}$ in $\mathcal{P}(X_n)$ connects the discrete measures $Q_n(\rho_0^s)$ and $Q_n(\rho_1^s)$, but \eqref{lowerboundConteqn1} does not have the form of the discrete continuity equation.  Because of this, we construct a suitable solution $t \in [0,1] \mapsto (\tilde{\rho}_{n,t}, V_{n,t})$ to the discrete continuity equation which starts at $Q_n(\rho_0^s)$ and stays close to the curve $t \in [0,1] \mapsto \rho_{n,t}$; crucially, its discrete total action is comparable to that of the curve $t \in [0,1] \mapsto (\rho_t^s, \vec{V}_t^s)$ . From the triangle inequality we will be able to conclude that up to a small error $W_n(Q_n(\rho_0^s), Q_n(\rho_1^s))$ is below $W(\rho_0, \rho_1)$.

With this road map in mind, let us start by defining $\tilde{\veps}_n := \veps_n - 2 \delta_n$ and let $V_{n,t}$ be the vector field
\[ V_{n,t}(\x_i,\x_j) :=\frac{1}{\alpha_d \sigma_\eta} \fint_{U_i}\fint_{U_j} \vec{V}_t^s(x)\cdot \frac{y-x}{\tilde{\veps}_n} \frac{w_{\tilde{\veps}_n}(x,y)}{w_{\veps_n}(\x_i, \x_j)} dy dx, \quad i,j=1, \dots, n, \]
where we recall the $U_i$ are the transport cells induced by the $\infty$-OT map between $\nu$ and $\nu_n$. In the above, as has been done routinely throughout the paper, we use the convention that $\frac{0}{0}=0$.
Let $\tilde{\rho}_{n,t}$ be the solution to the equation 
\begin{equation}
\frac{d}{dt} \tilde{\rho}_{n,t}(\x_i) + \divergence_n( V_{n,t})(\x_i) =0 , \quad \forall t\in (0,1) , \quad \forall i=1, \dots,n,
\end{equation}
with initial condition $\tilde{\rho}_{n,0}= \rho_{n,0}$. We will later prove that $\tilde{\rho}_{n,t}$ is indeed a discrete density by showing that it is a non-negative function (see Remark \ref{RemarkNonNegative}). This however will be a direct consequence of the fact that $\tilde{\rho}_{n,t}$ is uniformly close to $\rho_{n,t}$. In what follows we focus on proving this.


First, notice that 
\begin{align}
\begin{split}
  \alpha_d \sigma_\eta \divergence_n(V_{n,t})(\x_i)  &=  \frac{1}{  n \veps_n}   \sum_{j=1}^n   \fint_{U_i} \fint_{U_j}   \vec{V}_t^s(x) \cdot \frac{y-x}{\tilde \veps_n}  w_{\tilde \veps_n}(x,y)dy dx 
 \\& +  \frac{1}{  n \veps_n}   \sum_{j=1}^n   \fint_{U_i} \fint_{U_j}   \vec{V}_t^s(y) \cdot \frac{y-x}{ \tilde \veps_n}  w_{\tilde \veps_n}(x,y)dy dx.  
  \end{split} 
  \label{divnPart1}
  \end{align}
Now, for every  $x \in U_i$  
\[  \frac{1}{n}\sum_{j=1}^n   \fint_{U_j}   \frac{y-x}{\tilde \veps_n}  w_{\tilde \veps_n}(x,y)dy   =   \int_{\T^d}  \frac{y-x}{\tilde \veps_n}  w_{\tilde \veps_n}(x,y)dy = 0, \]
 where the last equality is due to radial symmetry of the kernel used to define the weights. We conclude that the first term on the right hand side of \eqref{divnPart1} is equal to zero. Thanks to Lemma \ref{LemmaAux1}, the second term on the right hand side of \eqref{divnPart1} can be written as 
 \[  \frac{1}{\veps_n}\fint_{U_i}  \left(\int_{\T^d} \vec{V}_t^s(y) \cdot \frac{y-x}{\tilde \veps_n} w_{\tilde \veps_n}(x,y) dy \right) dx =\alpha_d \frac{\tilde{\veps}_n}{\veps_n}  \fint_{U_i} \int_{0}^\infty   g(x,\tilde \veps_n r)  r^{d+1} \eta(r) dr dx, \]
 where we recall that
\[ g(x,\tilde{ \veps}_n r) = \fint_{B(x, \tilde \veps_n r)}  \divergence(\vec{V}_t^s)(y) dy.   \]
Therefore,
\begin{equation*}
\divergence_n(V_{n,t})(\x_i)  = \frac{1}{\sigma_\eta}\frac{\tilde{\veps}_n}{\veps_n} \fint_{U_i} \int_{0}^\infty   g(x,\tilde \veps_n r)  r^{d+1} \eta(r) dr dx, \quad \forall i=1, \dots, n,
\end{equation*}
and in particular,  for every $i=1, \dots, n$,
\begin{align*}
\begin{split}
\lvert \divergence_n(V_{n,t})(\x_i)  -  \frac{\tilde{\veps}_n}{\veps_n} \fint_{U_i} \divergence(\vec{V}_t^s) dx  \rvert & =  \frac{\tilde{\veps}_n}{\veps_n} \left \lvert\frac{1}{\sigma_\eta} \fint_{U_i} \int_{0}^\infty         ( g(x, \tilde \veps_n r)  - \divergence \vec{V}_t^s(x) ) r^{d+1} \eta(r)dr dx \right \rvert  
\\ & = \frac{\tilde{\veps}_n}{\veps_n}  \left \lvert\frac{1}{\sigma_\eta} \fint_{U_i} \int_{0}^\infty     \fint_{B(x,\tilde \veps_n r)}  ( \divergence{\vec{V}_t^s}(y) - \divergence(\vec{V}_t^s)(x) ) r^{d+1} \eta(r) dydr dx \right \rvert
\\& \leq  \frac{ \lVert D^2 \vec{V}_t^s \rVert_\infty}{\sigma_\eta}   \veps_n   \fint_{U_i} \int_{0}^\infty r^{d+2}\eta(r)  dr dx
\\&= C_{d} \lVert D^2 \vec{V}_t^s  \rVert_\infty  \veps_n .
\end{split}
\end{align*}
On the other hand, for every $i=1, \dots, n$
\[ \lvert \left( \frac{\tilde{\veps}_n}{\veps_n} - 1  \right)   \fint_{U_i} \divergence(\vec{V}_t^s) dx  \rvert \leq 2  \Lip(\vec{V}_t^s)  \frac{\delta_n}{\veps_n}.  \]
Combining the previous two inequalities we deduce that for every $t \in [0,1]$  and for every $i=1, \dots, n$,

\begin{align}
\begin{split}
  \lvert  \tilde{\rho}_{n,t}(\x_i) - \rho_{n, t}(\x_i)  \rvert & \leq  C ( \veps_n + \frac{\delta_n}{\veps_n} ) \int_{0}^1 (\Lip(\vec{V}_r^s) + \lVert D^2 \vec{V}_r^s  \rVert_\infty ) dr
  \\& \leq    C(s) ( \veps_n + \frac{\delta_n}{\veps_n} )  \left( \int_{0}^1 \int_{\T^d} \lvert  \vec{V}_r(x) \rvert^2 d \mu_r(x)  dr \right)^{1/2}  
  \\& =  C(s) ( \veps_n + \frac{\delta_n}{\veps_n} )  W(\mu_0,\mu_1)
  \\& \leq C(s) ( \veps_n + \frac{\delta_n}{\veps_n} ) ,
  \end{split}
  \label{Boundrhotilderho}
\end{align}
\nc
where the second inequality follows from Proposition \ref{HeatFlowTorus2}, and the latter one from the fact that the Wasserstein distance between two arbitrary measures in $\mathcal{P}(\T^d)$ is bounded above by the diameter of $\T^d$.

From Proposition \ref{HeatFlowTorus} we know that  
\[\rho_{n,t}(\x_i) = \fint_{U_i} \rho_t^s(x) dx \geq c_1(s)>0 , \quad \forall i=1, \dots, n. \]
Assuming we choose $s>0$ so that
\begin{equation}
C(s) (\veps_n + \frac{\delta_n}{\veps_n}) \leq \frac{c_1(s)}{2},   
\label{ChoiceN1}
\end{equation}
we conclude from \eqref{Boundrhotilderho} that for all $t \in [0,1]$, both  $\tilde{\rho}_{t,n}$ and $\rho_{t,n}$ are lower bounded by $c_1(s)/2$ (in particular we will later send $n \rightarrow \infty$ first, then we will send $s \rightarrow 0$). In particular, Lemma \ref{DiscreteW2L8} implies that
\begin{equation}
W_n(\rho_{n,1}, \tilde{\rho}_{n,1}) \leq  C(s) ( \veps_n + \frac{\delta_n}{\veps_n} )  + C \veps_n \diamP .
\label{AuxPart11}
\end{equation}

To estimate the total action of $t \mapsto (\tilde{\rho}_{n,t}, V_{n,t})$, first notice that for every $i=1,\dots, n$ and every $x \in U_i$ we have

\begin{align}
\begin{split}
\lvert  \tilde{\rho}_{n,t}(\x_i) - \rho_t^s(x)   \rvert & \leq \lvert  \tilde{\rho}_{n,t}(\x_i) - \rho_{n,t}(\x_i) \rvert  +   \lvert \rho_{n,t}(\x_i) - \rho_t^s(x)   \rvert     \\
& \leq  C(s)  \left(   \veps_n + \frac{\delta_n}{\veps_n}   \right) +2 \delta_n C_2(s)=: \kappa(n,s), 
\end{split}
\label{AuxPart100}
\end{align}
\nc
which follows from \eqref{Boundrhotilderho}, the definition of $\rho_{n,t}(\x_i)$, and Proposition \ref{HeatFlowTorus}. From the properties of $\theta$, it follows that for all $i,j=1,\dots, n$ and all $x\in U_i$, $y \in U_j$, 
\begin{equation}
( 1+ \frac{2\kappa(n,s)}{c_1(s)}  ) \theta( \tilde{\rho}_{n,t}(\x_i), \tilde{\rho}_{n,t}(\x_j)   ) \geq \theta( \tilde{\rho}_{n,t}(\x_i) + \kappa(n,s), \tilde{\rho}_{n,t}(\x_j)  + \kappa(n,s)  ) \geq \theta (\rho_t^s(x), \rho_t^s(y)  ),    
\label{AuxPart1001}
\end{equation}
where the last inequality follows from the monotonicity of $\theta$ and \eqref{AuxPart100}. Likewise, 
\begin{equation}
\rho_t^s(x) =\theta( \rho_t^s(x) , \rho_t^s(x ) ) \leq  \left( 1+    \frac{C_2(s)}{c_1(s)} \lvert y-x \rvert \right)  \theta( \rho_t^s(x), \rho_t^s(y) )  , \quad \forall x, y \in \T^d.
\label{AuxPart1002}
\end{equation}
Therefore, 
\begin{align}
\begin{split}
 &\mathcal{A}_n(\tilde{\rho}_{n,t}, V_{n,t})=  \frac{1}{\alpha_d^2 \sigma_\eta^2 n^2}\sum_{i,j}   \left( \fint_{U_i} \fint_{U_j}  \vec{V}_t^s(x) \cdot \frac{y-x}{\tilde{\veps}_n} \frac{w_{\tilde{\veps}_n }(x,y)}{w_{\veps_n}(\x_i, \x_j)}dy dx \right)^2  \frac{w_{\veps_n}(\x_i, \x_j)}{\theta( \tilde{\rho}_{n,t}(\x_i), \tilde{\rho}_{n,t}(\x_j)  )}
\\& \leq  \frac{1}{\alpha_d^2 \sigma_\eta^2 n^2}(1 + C_d \frac{\delta_n}{\veps_n}) \sum_{i,j}    \fint_{U_i} \fint_{U_j}  \left(\vec{V}_t^s(x) \cdot \frac{y-x}{\tilde{\veps}_n}\right)^2  w_{\tilde{\veps}_n}(x,y)dy dx \frac{1}{\theta( \tilde{\rho}_{n,t}(\x_i), \tilde{\rho}_{n,t}(\x_j)  )}
\\&  \leq  \frac{1}{\alpha_d^2 \sigma_\eta^2 n^2}(1 + C_d \frac{\delta_n}{\veps_n})( 1+  \frac{2\kappa(n,s)}{c_1(s)} ) \sum_{i,j}    \fint_{U_i} \fint_{U_j}  \left(\vec{V}_t^s(x) \cdot \frac{y-x}{\tilde{\veps}_n}\right)^2  \frac{w_{\tilde{\veps}_n}(x,y)}{\rho_t^s(x)} (1 + C(s) \lvert y-x \rvert )dy dx 
\\&= \frac{1}{\alpha_d^2 \sigma_\eta^2 }(1 + C_d \frac{\delta_n}{\veps_n})( 1+  \frac{2\kappa(n,s)}{c_1(s)} ) \int_{\T^d} \int_{\T^d}  \left(\vec{V}_t^s(x) \cdot \frac{y-x}{\tilde{\veps}_n}\right)^2  \frac{w_{\tilde{\veps}_n}(x,y)}{\rho_t^s(x)} (1 + C(s) \lvert y-x \rvert )dy dx
\\& \leq \frac{1}{\alpha_d \sigma_\eta } \left( 1 + C \frac{\delta_n}{\veps_n} + C  \frac{\kappa(n,s)}{c_1(s)}  + C(s) \veps_n \right) \int_{\T^d} \frac{\lvert \vec{V}_t^s (x) \rvert^2}{\rho^s_{t}(x)} dx.
\end{split}
\end{align}
In the above, the first inequality was obtained using Jensen's inequality and the fact that  $\frac{w_{\tilde{\veps}_n}(x,y)}{w_{\veps_n}(\x_i, \x_j)} \leq  \left(\frac{\veps_n}{\tilde{\veps}_n}\right)^d \leq 1 + C_d \frac{\delta_n}{\veps_n}$; the second inequality was obtained using \eqref{AuxPart1001} and \eqref{AuxPart1002}; the third inequality follows from Corollary \ref{LemmaAux3}. Integrating the above inequality over $t$ and using Proposition \ref{HeatFlowTorus}, we deduce that
\begin{align*}
\begin{split}
  (W_n(\tilde{\rho}_{n,0} , \tilde{\rho}_{n,1}))^2 & \leq \int_{0}^1 \mathcal{A}_n(\tilde{\rho}_{n,t}, V_{n,t}) dt \leq \frac{1}{\alpha_d \sigma_\eta} (W(\mu_0, \mu_1 ))^2 +  C \left(\frac{\delta_n}{\veps_n} +    \frac{\kappa(n,s)}{c_1(s)} + C(s) \veps_n \right).  
 \end{split}  
\end{align*}
 Together with \eqref{AuxPart11} and the triangle inequality, the above implies that 
\begin{align}
\begin{split}
 &W_n(\rho_{n,0}, \rho_{n,1})  \leq W_n(\rho_{n,0}, \tilde{\rho}_{n,1}) + W_{n}( \tilde{\rho}_{n,1}, \rho_{n,1}) 
 \\&\leq\frac{1}{\sqrt{\alpha_d \sigma_\eta}} W(\mu_0, \mu_1 ) +  C \left(  \frac{\delta_n}{\veps_n} + \frac{\kappa(n,s)}{c_1(s)} + C(s)\veps_n \right)^{1/2} + C(s) \left( \veps_n +  \frac{\delta_n}{\veps_n}\right) + C \veps_n \diamP.
\end{split}
\label{Main1}
\end{align}
\nc

\subsection{Part 2}
For $\mu_0, \mu_1 \in \mathcal{P}(\T^d)$ let $\rho_{n,0}:= Q_n(\bH_s(\mu_0))$ and $\rho_{n,1}:= Q_n(\bH_s(\mu_1))$ as in Part 1. Notice that by the definition of $Q$ and Proposition \ref{HeatFlowTorus} we have
\[ C_dC_2(s) \geq \rho_{n,0}, \rho_{n,1} \geq c_1(s) , \quad \Lip(\rho_{n,0}), \Lip(\rho_{n,1}) \leq C_2(s). \]

Let $t \in [0,1] \mapsto (\rho_{n,t}, V_{n,t})$ be a geodesic connecting $\rho_{n,0}$ and $\rho_{n,1}$. 
By the discrete smoothening step in Section \ref{Sec:SmoothDisc} (Proposition \ref{SmoothDiscrete} and Proposition \ref{a}), for fixed $a\in(0,1)$ and $b \in (c \delta_n , i_d)$, we can construct a curve $t \in [0,1] \mapsto (\tilde{\rho}_{n,t}, \tilde{V}_{n,t})$
satisfying the following properties:
\begin{enumerate}
	\item For every $i,j=1,\dots, n$ 
	\[\lvert \tilde{\rho}_{n,t}(\x_i) - \tilde{\rho}_{n,t}(\x_j) \rvert \leq C(b) (\lvert \x_i - \x_j \rvert +  \delta_n ).\]
	\item For all $t \in[0,1]$, 
	\[C(b) \geq \tilde{\rho}_{n,t} \geq a . \] 
	\item For all $i=1, \dots, n$, 
	\[ \frac{d}{dt}\tilde{\rho}_{n,t}(\x_i) + \divergence_{n , \tilde{\veps}_n}(\tilde{V}_{n,t})(\x_i) =0.  \]
	\item $\int_{0}^1 \mathcal{A}_{n , \tilde{\veps}_n}(\tilde{\rho}_{n,t}, \tilde{V}_{n,t})dt \leq (1+ C_d \frac{\delta_n}{\veps_n})(W_n(\rho_{n,0}, \rho_{n,1})  )^2 $.
	\item $W(P_n(\tilde{\rho}_{n,j}), P_n(\rho_{n,j})) \leq C(s) a  + C(s) (1+ \frac{1}{a} ) (  b+ \delta_n)  $, for $j=0,1$.
\end{enumerate}
Here $\tilde{\veps}_n= \veps_n + C_d \delta_n$ and $\divergence_{n, \tilde{\veps}_n}$, $\mathcal{A}_{n, \tilde{\veps}_n}$ are defined in Proposition \ref{SmoothDiscrete}.

For every $t \in [0,1]$, let $\tilde{\rho}_t:= P_n(\tilde{\rho}_{n,t})$. Observe that for every $i=1, \dots, n$ we have
\[  \frac{d}{dt}\tilde{\rho}_t(x)   =    - \divergence_{n , \tilde{\veps}_n}(\tilde{V}_{n,t})(\x_i) , \quad \forall x \in U_i  .\]
We notice that although the  path $t  \in [0,1]\mapsto \tilde{\rho}_t$ in $\mathcal{P}(\T^d)$ does indeed connect the measures with densities $\tilde{\rho}_0$ and $\tilde{\rho}_1$, the equation satisfied by $\tilde{\rho}_t$ is not written in the form of the continuity equation. To go around this, we construct a suitable solution $t \in [0,1] \mapsto (\tilde \rho_{t}^h, V_{t}^h)$ to the continuity equation which starts at $\tilde \rho_0^h$ (a mollified version of $\tilde{\rho}_0$ using a parameter $h>0$ to be chosen later on) and stays close to the original curve $t \in [0,1] \mapsto \tilde{\rho}_{t}$. 
The smoothness of $\tilde{\rho}_t^h$ and the regularity of $\tilde{\rho}_t$ (coming from the regularity of $\tilde{\rho}_{n,t}$) will guarantee that the $L^\infty$ norm of $\tilde{\rho}_t^h - \tilde{\rho}_t$ is small. We use this fact to establish that the action associated to the curve $t \in [0,1] \mapsto (\tilde{\rho}_t^h, \vec{V}_t^h)$ is, up to a small error, smaller than the discrete action of the curve $t \in [0,1] \mapsto (\tilde{\rho}_{n,t}, \tilde{V}_{n,t})$.
\nc

Let $\vec{V}_{t}$ be the vector field defined by
\[ \vec{V}_t(x) := \int_{\T^d} \tilde{V}_{n,t}(T_n(x),T_n( y)) w_{\tilde{\veps}_n}(T_n(x),T_n(y)) \frac{y-x}{\tilde{\veps}_n} dy, \quad x \in \T^d, \]
and for fixed $h>0$ (to be chosen later on) consider the mollified vector field
\[  \vec{V}_t^h:= J_h \ast \vec{V}_t,   \]
where we recall $J_h$ is de Gaussian defined in \eqref{GaussianJ}. Let $\phi \in C^\infty(\T^d)$ and let $\psi := J_h \ast \phi$. Then, as in \eqref{ConvoluVectors},
\begin{align*}
\begin{split}
\int_{\T^d}\vec{V}_t^h(x) \cdot \nabla \phi(x) dx &=\int_{\T^d} \vec{V}_t(x) \cdot \nabla \psi(x) dx 
\\&= \sum_{i,j}  \tilde V_{n,t}(\x_i,\x_j) w_{\tilde{\veps}_n}(\x_i,\x_j) \int_{U_i} \int_{U_j} \frac{y-x}{\tilde{\veps}_n} \cdot \nabla \psi(x) dy dx
\\&  =: \sum_{i,j} \tilde V_{n,t}(\x_i,\x_j) w_{\tilde{\veps}_n}(\x_i,\x_j) \int_{U_i} \int_{U_j} \frac{\psi(y)-\psi(x)}{\tilde{\veps}_n}  dy  dx + \gamma_{n,t}
\\&= \frac{1}{n^2}\sum_{i,j}\tilde V_{n,t}(\x_i,\x_j) w_{\tilde\veps_n}(\x_i,\x_j) \fint_{U_i} \fint_{U_j} \frac{\psi(y)-\psi(x)}{\tilde\veps_n}  dy  dx + \gamma_{n,t}
\\&=: \frac{1}{n^2}\sum_{i,j} \tilde V_{n,t}(\x_i,\x_j) w_{\tilde\veps_n}(\x_i,\x_j)  \frac{\psi(\x_j)-\psi(\x_i)}{\tilde\veps_n}   + \gamma_{n,t} +\beta_{n,t}
\\&= \langle \tilde V_{n,t}, \nabla_{n, \tilde{\veps}_n} \psi \rangle_{\mathfrak{X}(X_n)} +\gamma_{n,t} + \beta_{n,t}.
\end{split}
\end{align*}
Let us estimate $\gamma_{n,t}$ and $\beta_{n,t}$. Using Cauchy-Schwartz inequality, it is straightforward to show that
\begin{align*}
\begin{split}
\lvert \gamma_{n,t}  \rvert & \leq \sum_{i,j} \lvert \tilde V_{n,t}(\x_i, \x_j) \rvert w_{\tilde\veps_n}(\x_i, \x_j)\int_{U_i}\int_{U_j} \frac{\lVert D^2 \psi \rVert_\infty \lvert y-x \rvert^2}{2 \tilde \veps_n}dy dx
\\&\leq   \lVert  D^2 \psi \rVert_\infty   \sum_{i,j} \lvert \tilde V_{n,t}(\x_i, \x_j) \rvert w_{\tilde \veps_n}(\x_i, \x_j)\int_{U_i}\int_{U_j}\frac{\lvert y-x \rvert^2}{ \tilde \veps_n}dxdy
\\& \leq   \lVert  D^2 \psi \rVert_\infty \tilde{\veps}_n  \sum_{i,j} \lvert \tilde V_{n,t}(\x_i, \x_j) \rvert w_{\tilde \veps_n}(\x_i, \x_j)\int_{U_i}\int_{U_j}\frac{\lvert y-x \rvert^2}{ \tilde \veps_n^2}dxdy
\\& \leq  C \lVert D^2 \psi \rVert_\infty \veps_n  (\mathcal{A}_{n,\tilde{\veps}_n}(\tilde{\rho}_{n,t}, \tilde{V}_{n,t} ))^{1/2},
\end{split}
\end{align*}
%
and that
\begin{align*}
\begin{split}
\lvert \beta_{n,t} \rvert  & \leq   \frac{1}{n^2} \sum_{i,j} \lvert   \tilde V_{n,t}(\x_i, \x_j)\rvert  w_{\tilde \veps_n}(\x_i, \x_j) \frac{2 \Lip(\psi) \delta_n}{\tilde \veps_n}
\\& \leq C\Lip(\psi)\frac{\delta_n}{\veps_n}(\mathcal{A}_{n, \tilde{\veps}_n}(\tilde{\rho}_{n,t}, \tilde{V}_{n,t}))^{1/2}.
\end{split}
\end{align*}
Therefore, for every $t \in [0,1]$,
\begin{align}
\begin{split}
\left \lvert   \int_{\T^d}  \vec{V}_t ^h \cdot \nabla \phi dx - \langle \tilde{V}_{n,t}, \nabla_{n, \tilde{\veps}_n} \psi \rangle_{\mathfrak{X}(X_n)}    \right \rvert &\leq C( \Lip(\psi) + \lVert D^2 \psi \rVert_\infty )(\veps_n + \frac{\delta_n}{\veps_n} ) (\mathcal{A}_{n, , \tilde \veps_n}(\tilde{\rho}_{n,t}, \tilde{V}_{n,t}))^{1/2} 
\\& \leq   C(h)  \nc \lVert \phi \rVert_{L^1(\T^d)} (\veps_n + \frac{\delta_n}{\veps_n} ) (\mathcal{A}_{n, \tilde \veps_n}(\tilde{\rho}_{n,t}, \tilde{V}_{n,t}))^{1/2}, 
\end{split}
\label{AuxIneq0}
\end{align}
where the last inequality follows from Proposition \ref{HeatFlowTorus}.

Let $\tilde{\rho}_{t}^h$ be the solution of
\[  \frac{d}{dt}\tilde{\rho}_t^h (x)  +  \divergence(\vec{V}_t^h)(x) =0 \]
with initial condition $\tilde \rho_{0}^h= J_h \ast \tilde{\rho}_0$. We will show that for all $t \in (0,1)$, $\tilde{\rho}_t^h$ is indeed a density by showing that it is a non-negative function. This, however, will follow from the fact that $\tilde \rho_{t}^h$ is uniformly close to $\tilde{\rho}_t$; we focus on showing this. 

Indeed, at the discrete level,
\[ \frac{d}{dt}  \langle  \tilde{\rho}_{n,t} , J_h \ast \phi \rangle_{L^2(\nu_n)}   =    \langle \tilde V_{n,t} ,  \nabla _{n, \tilde{\veps}_n} (J_h \ast \phi) \rangle_{\mathfrak{X}(X_n)},   \]
and at the continuum level
\[ \frac{d}{dt} \langle  \tilde{\rho}_{t}^h ,  \phi \rangle_{L^2(\nu)}  = \int_{\T^d} \vec{V}_t^h \cdot \nabla \phi(x) dx. \]
Using \eqref{AuxIneq0} we deduce that
\[ \left \lvert \frac{d}{dt} \left(   \langle \tilde{\rho}_t^h, \phi \rangle_{L^2(\nu)} -\langle  \tilde{\rho}_{n,t} , J_h \ast \phi \rangle_{L^2(\nu_n)}\right) \right  \rvert  \leq     C(h) \nc \lVert \phi \rVert_{L^1(\T^d)} (\veps_n + \frac{\delta_n}{\veps_n} ) (\mathcal{A}_{n,\tilde{\veps}_n}(\tilde{\rho}_{n,t}, \tilde{V}_{n,t}))^{1/2} . \]
From this and Jensen's inequality it follows that for all $t \in [0,1]$,
\begin{align*}
\begin{split}
\lvert  \langle   \tilde{\rho}_t^h, \phi \rangle_{L^2(\nu)}  -  \langle  \tilde{ \rho}_{n,t}, J_h \ast \phi \rangle_{L^2(\nu_n)}   \rvert & \leq   C(h) \nc \lVert \phi \rVert_{L^1(\T^d)}  \left(   \int_{0}^{1} \mathcal{A}_{n, \tilde{\veps}_n}(\tilde \rho_{n,r}, \tilde V_{n,r})dr  \right)^{1/2}(\veps_n + \frac{\delta_n}{\veps_n} ) 
\\&+ \lvert \langle \tilde{\rho}_0^h, \phi \rangle_{L^2(\nu)}  -  \langle  \tilde{\rho}_{n,0}, J_h \ast \phi \rangle_{L^2(\nu_n)}     \rvert
\\&\leq C(h) \nc\lVert \phi \rVert_{L^1(\T^d)} (\veps_n + \frac{\delta_n}{\veps_n} ) +  \lvert \langle \tilde{\rho}_0^h, \phi \rangle_{L^2(\nu)}  -  \langle  \tilde{\rho}_{n,0}, J_h \ast \phi \rangle_{L^2(\nu_n)}     \rvert,
\end{split}
\end{align*}
where in the last inequality we have used the properties of the curve $t \in [0,1] \mapsto (\tilde{\rho}_{n,t}, \tilde{V}_{n,t})$, Proposition \ref{AprioriBound}, and the fact that $\veps_n \diamP \rightarrow 0$ as $n \rightarrow \infty$. Now, using the fact that $\tilde \rho_0^h = J_h \ast \tilde{\rho}_0 $,
\begin{align*}
 \lvert \langle \tilde{\rho}_0^h, \phi \rangle_{L^2(\nu)}  -  \langle  \tilde{\rho}_{n,0}, J_h \ast \phi \rangle_{L^2(\nu_n)}     \rvert     &=  \lvert \langle \tilde{\rho}_0,  J_h \ast \phi \rangle_{L^2(\nu)}  -  \langle  \tilde{\rho}_{n,0}, J_h \ast \phi\rangle_{L^2(\nu_n)}   \rvert 
 \\& = \lvert  \int_{\T^d} (   J_h \ast \phi(x)  - J_h \ast \phi(T_n(x)) ) \tilde \rho_0(x) dx  \rvert
 \\& \leq C_3(h) \delta_n \lVert \phi \rVert_{L^1(\T^d)}   .
\end{align*}
Hence, for every $t\in [0,1]$,
\begin{equation}
 \lvert  \langle   \tilde{\rho}_t^h, \phi \rangle_{L^2(\nu)}  -  \langle  \tilde{ \rho}_{n,t}, J_h \ast \phi \rangle_{L^2(\nu_n)}   \rvert \leq C(h) \nc\lVert \phi \rVert_{L^1(\T^d)} (\veps_n + \frac{\delta_n}{\veps_n} ) . 
 \label{ineq1Tri}
\end{equation}
On the other hand, 
\begin{equation}
\lvert     \langle \tilde{\rho}_{n,t}, J_h \ast \phi \rangle_{L^2(\nu_n)} -   \langle  \tilde{\rho}_{t}, J_h \ast \phi  \rangle_{L^2(\nu)}  \rvert  = \left \lvert \int_{\T^d}(   J_h \ast \phi(x)- J_h\ast \phi(T_n(x))    ) \tilde{\rho}_t(x) dx \right \rvert  \leq \delta_n C_3(h)\lVert \phi \rVert_{L^1(\T^d)}.
\label{ineq2Tri}
\end{equation}
Finally,
\begin{align}
\begin{split}
\lvert   \langle  \tilde{\rho}_t, J_h \ast\phi \rangle_{L^2(\nu)} -  \langle  \tilde{\rho}_t, \phi \rangle_{L^2(\nu)}  \rvert & = \lvert    \langle   J_h \ast \tilde \rho_t, \phi \rangle_{L^2(\nu)} - \langle  \tilde \rho_t, \phi \rangle_{L^2(\nu)}   \rvert
\\& \leq \int_{T^d} \lvert \tilde \rho_t(x) - J_h \ast \tilde \rho_t(x)  \rvert\lvert \phi(x) \rvert dx 
\\& \leq \sum_{i,j} \int_{U_i}\int_{U_j}J_h(x-z ) \lvert \tilde \rho_{n,t}(\x_i) - \tilde \rho_{n,t}(\x_j) \rvert \lvert  \phi(x)\rvert dz dx
\\&\leq  C(b) \nc \sum_{i,j} \int_{U_i}\int_{U_j}J_h(x-z ) (\lvert \x_i - \x_j \rvert + \delta_n) \lvert  \phi(x)\rvert dz dx
\\& \leq C(b) \sum_{i,j} \int_{U_i}\int_{U_j}J_h(x-z ) (\lvert x-z \rvert  +  \delta_n) \rvert \lvert  \phi(x)\rvert dz dx 
\\& \leq  C(b)( \sqrt{h} + \delta_n ) \lVert \phi \rVert_{L^1(\T^d)}.
\end{split}
\label{ineq3Tri}
\end{align}

From the triangle inequality and \eqref{ineq1Tri}, \eqref{ineq2Tri}, \eqref{ineq3Tri}, it follows that
\begin{align*}
\begin{split}
\lvert \langle \tilde{\rho}_t^h - \tilde{\rho}_t , \phi \rangle_{L^2(\nu)} \rvert  & \leq C\left( C(h) (\veps_n + \frac{\delta_n}{\veps_n}) + C(b) (\sqrt{h}+ \delta_n)  \right)\lVert  \phi \rVert_{L^1(\T^d)}
\end{split}
\end{align*}
\nc
%
Since the previous inequality holds for arbitrary $\phi \in C^\infty(\T^d)$, we can conclude, via a density argument and the duality between $L^1(\T^d)$ and $L^\infty(\T^d)$, that
\begin{align}
\lVert \tilde{\rho}_t^h - \tilde{\rho}_t  \rVert_{L^\infty(\T^d)}  \leq C\left( C(h) (\veps_n + \frac{\delta_n}{\veps_n}) + C(b) (\sqrt{h}+ \delta_n)  \right) =: \xi(n,h,b).
\label{estimateforh}
\end{align}
We will later pick the parameters $h, b, a$ appropriately so that in particular 
\begin{equation}
  \xi(n,h,b ) \leq \frac{a}{3}. 
  \label{Main2}
\end{equation}
As we will see later on, this is possible since we send $n \rightarrow \infty$, $h \rightarrow 0$ and $b \rightarrow 0$, before we send $a \rightarrow 0$. Using \eqref{Main2} we see that both $\tilde{\rho}_t^h$ and $\tilde{\rho}_t$ are bounded below by $a /3$ for all $t \in [0,1]$. In particular, $\tilde{\rho}_t^h$ is a density and, moreover, from Lemma \ref{W2L8}
\begin{align}
\begin{split}
 W(\tilde\rho_{t}(x) dx, \tilde{\rho}_t^h(x) dx) & \leq C \left( C(s) + \frac{1}{a} \right)\xi(n,h,b) .
 \end{split}
 \label{AuxPart21003}
\end{align}

Let us now estimate the total action of $t \in [0,1] \mapsto (\tilde{\rho}_t^h, \vec{V}_t^h)$. As a consequence of \eqref{estimateforh} we have
\[  \tilde{\rho}_t^h(x) \geq (1- 2\frac{\xi(n,h,b)}{a}) \tilde \rho_t(x) \geq \frac{\tilde \rho_t(x)}{1+ C\xi(n,h,b) / a}   , \quad \forall x \in \T^d .\]
Also, from the definition of $\tilde{\rho}_t$ and the properties of $\tilde{\rho}_{n,t}$ we see that
\[ \tilde{\rho}_{t}(x) \geq  \frac{\tilde{\rho}_{t}(z)}{1 + \frac{C(b)}{a}\left(\lvert z-x \rvert + \delta_n \right) } , \quad \forall x, z \in \T^d. \]
Hence, 
\begin{align*}
\begin{split}
\int_{\T^d} \frac{\lvert \vec{V}_t^h(x) \rvert^2}{\tilde{\rho}_t^h(x)}dx  &   \leq (1+ C \frac{\xi(n,h,b)}{a} ) \int_{\T^d} \frac{\lvert \vec{V}_t^h(x)  \rvert^2 }{ \tilde \rho_t(x)}dx   
\\ &\leq (1+ C \frac{\xi(n,h,b)}{a} ) \int_{\T^d} \int_{\T^d} J_h(x-z) \frac{\lvert \vec{V}_t(z) \rvert^2}{\tilde \rho_t(x)}dzdx  
\\& \leq (1+ C \frac{\xi(n,h,b)}{a} )  \int_{\T^d} \int_{\T^d} J_h(x-z) ( 1+ \frac{C(b)}{a}(\lvert z-x \rvert +  \delta_n )  ) \frac{\lvert \vec{V}_t(z) \rvert^2}{ \tilde \rho_t(z)}dzdx
\\& \leq (1+ C\frac{\xi(n,h,b)}{a}  + C \frac{C(b)}{a} \sqrt{h} + C\frac{C(b)}{a}\delta_n ) \int_{\T^d}  \frac{\lvert \vec{V}_t(z) \rvert^2}{ \tilde \rho_t(z)}dz
\\& \leq (1+ C \frac{\xi(n,h,b)}{a} ) \int_{\T^d}  \frac{\lvert \vec{V}_t(z) \rvert^2}{\rho_t(z)}dz,
\end{split}
\end{align*}
where the second inequality follows from Jensen's inequality. Using Lemma \ref{LemmaAux2} with \[V(x,y):=\tilde V_{n,t}(T_n(x), T_n(y)) \frac{w_{\tilde{\veps}_n}(T_n(x), T_n(y))}{w_{\tilde\veps_n+2 \delta_n}(x,y)},\]  
and using the fact that 
\[ \tilde \rho_t(z)  \geq \frac{\theta( \tilde{\rho}_{n,t}(\x_i), \tilde{\rho}_{n,t}(\x_j) )}{1 + \frac{C(b)}{a}\veps_n  }, \quad  \forall z \in U_i, \quad \forall \x_j \text{ s.t } |\x_j - \x_i| \leq \tilde \veps _n, \]
we deduce that 
\begin{align*}
\begin{split}
\int_{\T^d}  \frac{\lvert \vec{V}_t(z) \rvert^2}{ \tilde \rho_t(z)}dz &\leq  \alpha_d \sigma_\eta (1+ C \frac{\delta_n}{\veps_n})  \int_{\T^d} \int_{\T^d}(\tilde V_{n,t}(T_n(z), T_n(y)))^2  \left(\frac{w_{\tilde \veps_n}(T_n(z), T_n(y)) }{w_{\tilde \veps_n + 2 \delta_n}(z,y)} \right)^2\frac{w_{\tilde \veps_n+ 2 \delta_n}(z,y )}{\tilde \rho_t(z)} dy dz
\\&   \leq   \alpha_d \sigma_\eta (1 + C \frac{\delta_n}{\veps_n})   \int_{\T^d} \int_{\T^d}(\tilde V_{n,t}(T_n(z), T_n(y)))^2 \frac{w_{\tilde \veps_n}(T_n(x),T_n(y) )}{\tilde \rho_t(z)} dy dz 
\\& \leq (1 + C \frac{\delta_n}{\veps_n})(1+ \frac{C(b)}{a} \veps_n  ) \alpha_d \sigma_\eta  \mathcal{A}_{n, \tilde{\veps}_n}(\tilde{\rho}_{n,t}, \tilde{V}_{n,t}),
\\&\leq (1 + C \frac{\delta_n}{\veps_n} + \frac{C(b)}{a}\veps_n) \alpha_d \sigma_\eta \mathcal{A}_{n, \tilde{\veps}_n}(\tilde{\rho}_{n,t}, \tilde{V}_{n,t})  
\end{split}
\end{align*}
Integrating the above inequality with respect to $t$ and using properties (1)-(5) of the curve $t \in [0,1] \mapsto (\tilde{\rho}_{n,t}, \tilde{V}_{n,t})$, we conclude that
\[ (W( \tilde{\rho}_{0}^h dx , \tilde{\rho}_1^h dx  ))^2\leq \alpha_d \sigma_\eta (W_n(\rho_{n,0}, \rho_{n,1}))^2 + C(\frac{\xi(n,h,b)}{a} + \frac{\delta_n}{\veps_n} + \frac{C(b)}{a}\veps_n ). \]

Combining the previous inequality with \eqref{AuxPart21003}, and using the triangle inequality, we deduce that
\begin{align}
\begin{split}
  W(\tilde{\rho}_0 dx, \tilde{\rho}_1 dx) & \leq \sqrt{\alpha_d \sigma_\eta} W_n(\rho_{n,0},\rho_{n,1})  + C\left(\frac{\xi(n,h,b)}{a} + \frac{\delta_n}{\veps_n} + \frac{C(b)}{a}\veps_n \right)^{1/2} + C(C(s) + \frac{1}{a}) \xi(n,  h,b).  
  \\& =: \sqrt{\alpha_d \sigma_\eta} W_n(\rho_{n,0},\rho_{n,1}) + \zeta(n,h,b,a,s).
\end{split}
\label{auxPart2}
\end{align}

Finally, from the triangle inequality, \eqref{auxPart2} and the properties of $\tilde{\rho}_{n,j}$ ($j=0,1$) we obtain
\begin{align}
\begin{split}
 &W(\mu_0, \mu_1) \leq W(\mu_0, \bH_s(\mu_0)) + W(\bH_s(\mu_0), P_n(\rho_{n,0})) + W(P_n(\rho_{n,0}), P_n(\tilde{\rho}_{n,0}) )  +  W( P_n(\tilde{\rho}_{n,0}), P_n(\tilde{\rho}_{n,1})  ) 
\\& + W(P_n(\tilde{\rho}_{n,1}), P_n(\rho_{n,1})  )  +   W( P_n(\rho_{n,1}), \bH_s(\mu_1)  ) + W(\bH_s(\mu_1), \mu_1)
 \\& \leq \sqrt{\alpha_d \sigma_\eta} W_n(\rho_{n,0}, \rho_{n,1} )  +   2\sqrt{s} + 2\delta_n + C(s)a   + C(s)(1+ \frac{1}{a})  ( b + \delta_n) + \zeta(n,h,b,a,s).
\end{split}
\label{Main3}
\end{align}


\subsection{Wrapping up}
Let $\rho_n \in \mathcal{P}(X_n)$ and for $s>0$ let $\rho^s: \T^d \rightarrow \R$ be the density  
\[ \rho^s:= \bH_s \circ P_n ( \rho_n ).\]
Then,
\begin{align}
\begin{split}
W_n( \rho_n, Q_n( \rho^s) ) &= W_n(Q_n( P_n(\rho_n)) , Q_n(\rho^s))  
\\ & \leq C W( Q_n( P_n(\rho_n))  , Q_n(\rho^s)) + C \veps_n\diamP
\\& \leq C\left(  W( Q_n( P_n(\rho_n)) , P_n(\rho_n) )  +  W( P_n(\rho_n), \rho^s ) + W(\rho^s , Q_n(\rho^s) )  \right) + \veps_n \diamP
\\& \leq C W( P_n(\rho_n), \rho^s ) + C \veps_n \diamP
\\&= C  W( P_n(\rho_n), \bH_s \circ P_n ( \rho_n ) ) + C\veps_n \diamP
\\& \leq C \sqrt{s} + C\veps_n \diamP
\end{split}
\label{AuxMainthm1}
\end{align}
where the first equality follows from the fact that $Q_n\circ P_n$ is the identity on $\mathcal{P}(X_n)$; the first inequality follows from Lemma \ref{AprioriBound}; the second inequality follows from the triangle inequality; the third inequality follows from the definition of the map $Q_n$ and the fact that the cells $U_i$ have diameter at most $2\delta_n$; the last inequality follows from Proposition \ref{HeatFlowTorus}. The above shows that for any given $\rho_n \in \mathcal{P}(X_n)$ we can find a measure $\mu \in \mathcal{P}(T_d)$ for which $ Q_n \circ \bH_s(\mu) $  is within distance  $C \sqrt{s} + C\veps_n \diamP$ from $\rho_n$ .


We can now take $n \rightarrow \infty$, so that $\veps_n \rightarrow 0$ , $\frac{\delta_n}{\veps_n} \rightarrow 0$ and $\veps_n \diamP \rightarrow 0$. Then we can take $h \rightarrow 0$, $b\rightarrow 0$, $a \rightarrow 0$ and finally $s \rightarrow 0$ (in that order) in all the above estimates to conclude that we can pick $h:=h_n$, $b:= b_n$, $a:=a_n$, $s:=s_n$ to guarantee that \eqref{ChoiceN1} and \eqref{Main2} are satisfied and that the quantities
\[ \varrho_n^1 := C \left(  \frac{\delta_n}{\veps_n} + \frac{\kappa(n,s_n)}{c_1(s_n)} + C(s_n)\veps_n \right)^{1/2} + C(s_n) \left( \veps_n +  \frac{\delta_n}{\veps_n}\right) + C \veps_n \diamP,   \]
\[  \varrho_n^2:= 2\sqrt{s_n} + 2\delta_n + C(s_n)a_n   + C(s_n)(1+ \frac{1}{a_n})  ( b_n + \delta_n) + \zeta(n,h_n,b_n,a_n,s_n), \]
\[  \varrho_n^3:=C \sqrt{s_n} + C\veps_n \diamP,\]
all converge to zero as $n \rightarrow \infty$. We can then define,
\[ \varrho_n:= \max\{\varrho_n^1, \varrho_n^2, \varrho_n^3 \},\]
and observe that from \eqref{Main1}, \eqref{Main3}, \eqref{AuxMainthm1} it follows that $F_n:=Q_n\circ \bH_{s_n} $ is a $\varrho_n$-isometry between $\mathcal{P}(\T^d, \frac{1}{\sqrt{\alpha _d \sigma_\eta}} W)$ and $(\mathcal{P}(X_n), W_n)$. We conclude that $(\mathcal{P}(X_n), W_n)$ converges in the Gromov-Hausdorff sense towards $(\mathcal{P}(\T^d), \frac{1}{\sqrt{\alpha_d \sigma_\eta}}W)$.

 { \textbf{Acknowledgements}
	The author would like to thank Dejan Slep\v{c}ev for enlightening discussions and for introducing him to the line of research investigated in this work. The author would also like to thank Jan Maas for enlightening discussions on this and other related topics. This manuscript was completed while the author was visiting the Erwin Schr\"{o}dinger Institute to participate in the workshop ``Optimal Transport: from Geometry to Numerics". The author wants to thank the Institute for hospitality.}

%
%
%
%
%

\bibliography{Biblio}
\bibliographystyle{siam}

\end{document}